\def\biblio{\bibliography{duality}\bibliographystyle{alpha}}
\definecolor{dark-red}{rgb}{0.5,0.15,0.15}
\definecolor{dark-blue}{rgb}{0.15,0.15,0.6}
\definecolor{dark-green}{rgb}{0.15,0.6,0.15}
\newcommand{\iHom}{\underline{\operatorname{Hom}}}
\renewcommand*{\backref}[1]{}
\renewcommand*{\backrefalt}[4]{%
  \ifcase #1 %
No citations.
  \or
(cit. on p. #2).%
  \else
(cit on pp. #2).%
  \fi%
}
\newtheorem{thm}{Theorem}[section]
\newtheorem{cor}[thm]{Corollary}
\newtheorem{prop}[thm]{Proposition}
\newtheorem{lem}[thm]{Lemma}
\newtheorem{quest}[thm]{Question}
\newtheorem*{thm*}{Theorem}
\newtheorem*{cor*}{Corollary}
\theoremstyle{definition}
\newtheorem{defn}[thm]{Definition}
\newtheorem{exmp}[thm]{Example}
\theoremstyle{remark}
\newtheorem{rem}[thm]{Remark}
\let\c@equation\c@thm
\numberwithin{equation}{section}
\DeclareMathOperator{\Sp}{Sp}
\DeclareMathOperator{\Hom}{Hom}
\DeclareMathOperator{\End}{End}
\DeclareMathOperator{\colim}{colim}
\DeclareMathOperator{\cA}{\mathcal{A}}
\DeclareMathOperator{\cC}{\mathcal{C}}
\DeclareMathOperator{\cD}{\mathcal{D}}
\DeclareMathOperator{\cS}{\mathcal{S}}
\DeclareMathOperator{\cT}{\mathcal{T}}
\DeclareMathOperator{\cF}{\mathcal{F}}
\DeclareMathOperator{\cG}{\mathcal{G}}
\DeclareMathOperator{\cV}{\mathcal{V}}
\DeclareMathOperator{\Id}{\mathrm{Id}}
\DeclareMathOperator{\Ext}{Ext}
\DeclareMathOperator{\Tor}{Tor}
\DeclareMathOperator{\Mod}{Mod}
\DeclareMathOperator{\Stable}{Stable}
\DeclareMathOperator{\Comod}{Comod}
\DeclareMathOperator{\StMod}{StMod}
\DeclareMathOperator{\Kos}{Kos}
\DeclareMathOperator{\Loc}{Loc}
\newcommand{\Locid}[1]{\mathrm{Loc}_{#1}^{\otimes}}
\DeclareMathOperator{\Ho}{Ho}
\DeclareMathOperator{\Thick}{Thick}
\DeclareMathOperator{\Ind}{Ind}
\DeclareMathOperator{\Fun}{Fun}
\DeclareMathOperator{\Alg}{Alg}
\newcommand{\bE}{\mathbb{E}}
\DeclareMathOperator{\cell}{cell}
\DeclareMathOperator{\ac}{ac}
\DeclareMathOperator{\local}{local}
\DeclareMathOperator{\Stablec}{\mathcal{C}}
\DeclareMathOperator{\Cell}{Cell}
\newcommand{\matlis}{\mathbb{D}}
\newcommand{\p}{\frak{p}}
\newcommand{\kos}[2]{{#1}/\!\!/{#2}}
\newcommand{\cal}{\mathcal}
\newcommand{\xr}{\xrightarrow}
\newcommand{\cH}{\check{C}H}
\newcommand{\Z}{\mathbb{Z}}
\Crefname{figure}{Figure}{Figures}
\Crefname{assu}{Assumption}{Assumptions}
\Crefname{lem}{Lemma}{Lemmas}
\Crefname{prop}{Proposition}{Propositions}
\renewcommand{\frak}{\mathfrak}
\newcommand{\vP}{\cal{V}{(\frak{p})}}
\newcommand{\Spec}{\textnormal{Spec}^{\textnormal{h}}}
\newcommand{\cGorenstein}{Absolute Gorenstein }
\newcommand{\Gorenstein}{absolute Gorenstein }
\newcommand{\Normalization}{Gorenstein normalization }
\newcommand{\nsGorenstein}{absolute Gorenstein}
\DeclareMathOperator{\Inj}{Inj}
\newcommand{\fp}{\mathfrak{p}}
\newcommand{\fq}{\mathfrak{q}}
\newcommand{\fr}{\mathfrak{r}}
\newcommand{\recollement}[5]{
\xymatrix{{#1} \ar[r]|-{#2} & #3 \ar[r]|-{#4} \ar@<1ex>[l]^-{{#2}_!} \ar@<-1ex>[l]_-{{#2}^*} & #5, \ar@<1ex>[l]^-{{#4}!} \ar@<-1ex>[l]_-{{#4}^*}
}}
\let\lim\relax
\DeclareMathOperator{\lim}{lim}
\newcommand{\mm}{/\!\!/}
\newcommand{\cU}{\mathcal{U}}
\title{Local duality for structured ring spectra}
\author{Tobias Barthel}
\address{Department of Mathematical Sciences, University of Copenhagen, Universitetsparken 5, 2100 K{\o}benhavn {\O}, Denmark}
\email{tbarthel@math.ku.dk}
\author{Drew Heard}
\address{Universit{\"a}t Hamburg, Bundesstrasse 55, 20146 Hamburg, Germany}
\email{drew.k.heard@gmail.com}
\author{Gabriel Valenzuela}
\address{Department of Mathematics, The Ohio State University, 100 Math Tower, 231 West 18th Avenue, Columbus, OH 43210-1174, USA}
\email{valenzuelavasquez.2@osu.edu}
\date{\today}
\begin{document}

\begin{abstract}
We use the abstract framework constructed in our earlier paper \cite{bhv} to study local duality for Noetherian $\bE_{\infty}$-ring spectra. In particular, we compute the local cohomology of relative dualizing modules for finite morphisms of ring spectra, thereby generalizing the local duality theorem of Benson and Greenlees. We then explain how our results apply to the modular representation theory of compact Lie groups and finite group schemes, which recovers the theory previously developed by Benson, Iyengar, Krause, and Pevtsova.
\end{abstract}

\maketitle

\def\biblio{}

\section{Introduction}\label{sec:introduction}

\subsection{Background and motivation}

In \cite{bhv}, we developed an abstract framework for constructing local cohomology and local homology functors for a general class of stable $\infty$-categories, and used it to demonstrate the ubiquity of local duality in algebra and topology. The goal of this paper is to investigate a specific class of examples in detail, applying our methods in particular to modular representation theory. Moreover, we consider the relation between local and global duality for structured ring spectra, thereby providing a different perspective on the Gorenstein condition previously studied in depth by Greenlees~\cite{greenlees_hi}. The main examples of $\infty$-categories of interest to us in this paper are coming from the modular representation theory of a finite group $G$ over a field $k$ of characteristic $p$ dividing the order of $G$. There are two natural stable $\infty$-categories associated to the group algebra $kG$: The derived category $\cD_{kG} = \cD\Mod_{kG}$ and the stable module category $\StMod_{kG}$. Benson and Krause \cite{krausebenson_kg} constructed a single triangulated category $\Stable_{kG}$ fitting into a recollement
\[
\xymatrix{\StMod_{kG} \ar[r] & \Stable_{kG} \ar[r] \ar@<1ex>[l] \ar@<-1ex>[l] & \cD_{kG},  \ar@<1ex>[l] \ar@<-1ex>[l]}
\]
hence containing the information of both of the outer terms. If $G$ is a $p$-group, Morita theory provides an equivalence between $\Stable_{kG}$ and the homotopy category of module spectra over the $\bE_{\infty}$-ring spectrum $C^*(BG,k)$ of cochains on $G$ with coefficients in $k$, and it is straightforward to lift this to an equivalence of underlying $\infty$-categories. However, for general finite $G$, $\Stable_{kG}$ is not generated by $k$ and hence $\Mod_{C^*(BG,k)}$ forms a proper localizing subcategory. 

Benson, Iyengar, and Krause \cite{benson_local_cohom_2008} developed a theory of local cohomology and homology functors for certain triangulated categories, which was employed in remarkable work \cite{bik_annals} to classify all localizing subcategories of $\Stable_{kG}$. Our first aim is to show that their theory is equivalent to our abstract local duality framework when applied to the cellular objects in $\Stable_{kG}$. This requires a mild modification and generalization of our techniques, which we develop in \Cref{sec:abstract}. Specifically, we formulate abstract local duality for torsion subcategories that are not necessarily ideals. This allows us to study cellular objects in rather general stable $\infty$-categories, thus covering examples like the motivic homotopy category as special cases. 

Our approach can then be described in two steps:
\begin{enumerate}
	\item Formulate and prove local duality for Noetherian $\bE_{\infty}$-ring spectra $R$, which applies in particular to $R = C^*(BG,k)$.
	\item Transfer this theory to the subcategory of cellular objects in $\Stable_{kG}$ via derived Morita theory.
\end{enumerate}

While this seems to be more complicated than the approach taken in \cite{benson_local_cohom_2008} at first sight, it has several advantages. First of all, $\infty$-categories of module spectra are compactly generated by the free module of rank 1, and are therefore amenable to the methods of \cite{bhv}. Moreover, since we can directly work with the coefficients of $R$ rather than an action as in \cite{benson_local_cohom_2008}, the construction of several spectral sequences computing local cohomology and local homology becomes more transparent, allowing us to verify convergence in many new cases. Finally, as we will see shortly, the theory for Noetherian $\bE_{\infty}$-ring spectra is interesting in its own right, as it provides a toy example for potential generalizations to derived algebraic geometry. 

\subsection{Main results}

We proceed to briefly summarize the main theorems of the present paper. Building on the abstract framework developed in \Cref{sec:abstract}, the main result of \Cref{sec:ringspectra} is the construction of local cohomology and local homology functors for structured ring spectra in terms of specialization closed subsets of the Zariski spectrum of $\pi_*R$. Informally speaking, this theory provides a notion of support for affine derived schemes analogous to \cite{benson_local_cohom_2008,bikp_stratification}. 

\begin{thm*}[\Cref{thm:localduality}]
Let $R$ be an $\bE_{\infty}$-ring spectrum with $\pi_*R$ a graded Noetherian ring. For any specialization closed subset $\cV \subseteq \Spec(\pi_*R)$ there exists a quadruple of functors $(\Gamma_{\cV},L_{\cV},\Delta_{\cV},\Lambda_{\cV})$ on $\Mod_R$ satisfying the local duality properties listed in \emph{loc.~cit.}. Moreover, these functors coincide with the local cohomology and local homology functors previously constructed by Benson, Iyengar, and Krause \cite{benson_local_cohom_2008}. 
\end{thm*}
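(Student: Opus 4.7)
The plan is to apply the abstract framework developed in \Cref{sec:abstract} to the presentably symmetric monoidal stable $\infty$-category $\Mod_R$, which is compactly generated by $R$ itself. Given a specialization closed subset $\cV \subseteq \Spec(\pi_*R)$, I would write $\cV = \bigcup_\alpha V(I_\alpha)$, where $I_\alpha$ ranges over the finitely generated homogeneous ideals of $\pi_*R$ with $V(I_\alpha) \subseteq \cV$; this is possible because $\pi_*R$ is Noetherian, so every closed subset of $\Spec(\pi_*R)$ has this form. For each such $I = (x_1,\dots,x_n)$ I would form the Koszul object
\[
\Kos(R;I) = R/x_1 \otimes_R \cdots \otimes_R R/x_n \in \Mod_R^{\omega},
\]
and let $\cT_I \subseteq \Mod_R$ denote the localizing tensor ideal it generates. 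The candidate torsion subcategory is then $\cT_\cV = \Loc^{\otimes}\langle \Kos(R;I_\alpha)\rangle_\alpha$, which is the filtered union of the $\cT_{I_\alpha}$ and is compactly generated inside $\Mod_R$ essentially by construction.

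Next I would invoke the abstract local duality theorem of \Cref{sec:abstract} to produce the quadruple $(\Gamma_\cV,L_\cV,\Delta_\cV,\Lambda_\cV)$: $\Gamma_\cV$ is the smashing Bousfield colocalization onto $\cT_\cV$, $L_\cV$ is the complementary localization, $\Lambda_\cV$ is the associated completion, and $\Delta_\cV$ is the colocalization dual to $L_\cV$. The formal identities of local duality (recollement, compatibility with the tensor product, the fiber/cofiber sequence $\Gamma_\cV \to \Id \to L_\cV$, and the adjunction $\Gamma_\cV \dashv \Lambda_\cV$ on the torsion/complete parts) then all follow from the general machinery, and it only remains to check that the input hypotheses — namely that the defining generators $\Kos(R;I_\alpha)$ are compact and tensor-generate an ideal — are satisfied; under the Noetherian hypothesis this is standard.

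The comparison with the functors of Benson, Iyengar, and Krause is the step I expect to be the main obstacle, because their construction is purely triangulated and proceeds from a central graded-ring action on $\Ho(\Mod_R)$, rather than from the full $\bE_\infty$-structure. To bridge the two, I would argue that both $\Gamma_\cV$ and the BIK functor $\Gamma_\cV^{\mathrm{BIK}}$ are coproduct-preserving exact idempotent endofunctors whose essential image is precisely the localizing subcategory of $\Mod_R$ consisting of objects with support contained in $\cV$. By the uniqueness of such an idempotent, it suffices to show the two essential images agree, which reduces to the case $\cV = V(I)$ for $I$ finitely generated. Here one checks directly that our Koszul object $\Kos(R;I)$ and the Koszul complex used in \cite{benson_local_cohom_2008} generate the same localizing ideal, and moreover that evaluating $\Gamma_{V(I)}$ on the unit $R$ recovers the stable Koszul complex $\colim_n \Kos(R;I^n)$, which is exactly the BIK formula. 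Passing to filtered colimits over $\alpha$ then gives the comparison for arbitrary $\cV$, and the statement for $\Lambda_\cV$ follows formally by adjunction from that for $\Gamma_\cV$.
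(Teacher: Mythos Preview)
Your construction of the quadruple is essentially the paper's: define $\Mod_R^{\cV\text{-tors}}$ as the localizing subcategory generated by Koszul objects for (prime) ideals in $\cV$ and feed this into the abstract local duality machine of \Cref{thm:abstractlocalduality}; smashing follows from \Cref{lem:smashing} since $\Mod_R$ is monogenic. So far, so good.

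The comparison with Benson--Iyengar--Krause is where your outline has a gap. You correctly observe that both $\Gamma_\cV$ and $\Gamma_\cV^{\mathrm{BIK}}$ are colocalizations, so it suffices to match their essential images, i.e.\ to prove
\[
T_\cV \;=\; \{M : (\pi_*M)_{\fq}=0 \text{ for all } \fq \notin \cV\} \;=\; \Loc(\kos{R}{\fp}\mid \fp\in\cV).
\]
But your proposed reduction ``to $\cV=V(I)$ and then pass to filtered colimits over $\alpha$'' does not close the argument. Even granting $T_{V(I_\alpha)}=\cT_{I_\alpha}$ for each $\alpha$, you only get $\cT_\cV=\Loc\bigl(\bigcup_\alpha T_{V(I_\alpha)}\bigr)$; the inclusion $T_\cV \subseteq \Loc\bigl(\bigcup_\alpha T_{V(I_\alpha)}\bigr)$ is \emph{not} formal, since an object in $T_\cV$ need not lie in any single $T_{V(I_\alpha)}$, and the localizing closure on the right is exactly what is in question. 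In other words, the hard inclusion $T_\cV\subseteq\cT_\cV$ does not reduce to the principal case.

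The paper handles this inclusion directly, for arbitrary $\cV$, via the local-to-global principle of Hovey--Palmieri--Strickland (\Cref{thm:hps_bousfield}): to show $L_\cV M\simeq 0$ for $M\in T_\cV$, one tests against $\kos{R_{\frak a}}{\frak a}$ for every prime $\frak a$, splitting into the cases $\frak a\in\cV$ (where $L_\cV(\kos{R}{\frak a})=0$) and $\frak a\notin\cV$ (where $(\pi_*M)_{\frak a}=0$ kills the K\"unneth spectral sequence). This Bousfield-class decomposition is the substantive ingredient you are missing; once it is in place, the identification of $\Lambda^\cV$ follows by adjunction exactly as you say.
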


Following the approach outlined above, we obtain the local cohomology and local homology functors we were after, not just for finite groups, but in fact for finite group schemes. After introducing an appropriate $\infty$-category of comodules over Hopf algebras, the following result is proved in \Cref{sec:hopfalgebras} in terms of finite-dimensional Hopf algebras; for variety, we state it here in its equivalent geometric form; see for example \cite[Ch.~II]{gr_book} for more details on ind-coherent sheaves. 

\begin{thm*}[\Cref{thm:cellular_functors}]
If $\mathbb{G}$ denotes a finite affine group scheme over a field $k$, then the $\infty$-category $\mathrm{IndCoh}^{\cell}_{\mathbb{G}}$ of cellular ind-coherent sheaves on $\mathbb{G}$ admits local cohomology $\Gamma_{\cV}$ and local homology functors $\Lambda_{\cV}$ satisfying local duality for every specialization closed subset $\cV \subseteq \Spec(H^*(\mathbb{G},k))$. 
\end{thm*}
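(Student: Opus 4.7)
The plan is to reduce this to the module spectrum case by combining the abstract local duality framework of \Cref{sec:abstract} with the Morita theory set up in \Cref{sec:hopfalgebras} and then applying the previous theorem on $\bE_\infty$-ring spectra. Writing $\mathbb{G} = \Spec(H)$ for a finite-dimensional commutative Hopf algebra $H$, the category $\IndCoh_{\mathbb{G}}$ corresponds under Koszul/comodule-module duality to the derived $\infty$-category of $H^{\vee}$-modules, or equivalently to the Benson--Krause style stable $\infty$-category $\Stable_{\mathbb{G}}$ assembled from $\StMod$ and $\cD$ via the recollement discussed in the introduction. The cellular subcategory $\IndCoh^{\cell}_{\mathbb{G}}$ is by definition the localizing subcategory generated by the unit $k$, so the first task is to identify it with $\Mod_R$ for the $\bE_\infty$-ring $R = \End_{\IndCoh_{\mathbb{G}}}(k)$.

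The second step is to run derived Morita theory to produce this identification. Since $k$ is a compact generator of the localizing subcategory it generates and $\IndCoh^{\cell}_{\mathbb{G}}$ is presentable and stable, Schwede--Shipley (or its $\infty$-categorical incarnation via Lurie) yields an equivalence $\IndCoh^{\cell}_{\mathbb{G}} \simeq \Mod_R$ where $R$ is the endomorphism ring spectrum computed in $\IndCoh_{\mathbb{G}}$. By standard cohomological computations (going through the bar construction for $\mathbb{G}$, or directly as $\Ext_{H^{\vee}}^*(k,k)$), we identify $\pi_*R \cong H^*(\mathbb{G},k)$, where the latter denotes the cohomology of the finite group scheme. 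By the fundamental finite generation theorem of Friedlander--Suslin, $H^*(\mathbb{G},k)$ is a graded Noetherian ring, so $R$ satisfies the hypotheses of the previous theorem.

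Given this identification, the third step is essentially formal: apply \Cref{thm:localduality} to $R$, which produces, for every specialization closed subset $\cV \subseteq \Spec(\pi_*R) = \Spec(H^*(\mathbb{G},k))$, the local cohomology and local homology functors $(\Gamma_\cV,L_\cV,\Delta_\cV,\Lambda_\cV)$ on $\Mod_R$ fitting into the local duality quadruple. Transporting these along the Morita equivalence yields the desired functors on $\IndCoh^{\cell}_{\mathbb{G}}$, and since local duality is an intrinsic property of the quadruple that is preserved by equivalences of stable $\infty$-categories, the local duality relations are inherited.

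I expect the main obstacle to lie in the first two steps rather than the last. Concretely, the subtlety is making precise the passage between the geometric side $\IndCoh_{\mathbb{G}}$ and the algebraic side of comodules/modules over $H$ and $H^\vee$ at the level of $\infty$-categories, together with verifying that the endomorphism ring spectrum of the unit really computes $H^*(\mathbb{G},k)$ in the expected way. Once the comodule formalism of \Cref{sec:hopfalgebras} is in hand, this amounts to identifying the correct notion of cellular object in an abstract torsion subcategory that is not necessarily an ideal, which was precisely the extension of the framework carried out in \Cref{sec:abstract}. The Friedlander--Suslin finite generation theorem and the previous local duality theorem then do the rest of the work.
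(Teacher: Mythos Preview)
Your proposal is correct, but it takes a longer route than the paper's own proof. The paper does not pass through the Morita equivalence with $\Mod_R$ to establish \Cref{thm:cellular_functors}; instead it applies the abstract local duality theorem (\Cref{thm:abstractlocalduality}) directly to the local duality context $(\Stablec,\Stablec^{\mathcal{V}\text{-tors}})$, where $\Stablec^{\mathcal{V}\text{-tors}} = \Loc(\kos{k}{\fp} \mid \fp \in \cV)$ is generated by Koszul objects built from the unit inside $\Stablec$. Since $k$ is compact in $\Stablec$ and each $\kos{k}{\fp}$ is a finite iterated cofiber of shifts of $k$, this is an honest local duality context, and the quadruple $(\Gamma_\cV,L_\cV,\Delta_\cV,\Lambda_\cV)$ together with the smashing property drop out immediately from \Cref{thm:abstractlocalduality} and \Cref{lem:smashing}. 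No appeal to Friedlander--Suslin or to \Cref{thm:localduality} is needed at this stage.

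Your detour through $\Mod_R$ is not wrong---indeed, the paper invokes exactly this Morita equivalence in the propositions following the theorem to import the explicit formulas, the spectral sequences, and the comparison with the Benson--Iyengar--Krause functors---but for the bare existence statement it is superfluous. The advantage of your approach is that it makes the role of Noetherianity of $H^*(\mathbb{G},k)$ visible from the outset; the paper's approach, by contrast, shows that the existence of the local duality quadruple on $\Stablec$ is purely formal and does not depend on any finiteness hypothesis on the coefficient ring, with Friedlander--Suslin only entering later when one wants Koszul-type descriptions and convergent spectral sequences. Finally, the discussion in your first paragraph about ``Koszul/comodule-module duality'' and ``$H^\vee$-modules'' is tangential and somewhat imprecise; the paper simply takes $\IndCoh_{\mathbb{G}}^{\cell}$ to mean $\Loc(k)$ inside $\Stable_B$ for $B$ the coordinate Hopf algebra, and nothing more elaborate is required.
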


This theorem comes with a strongly convergent spectral sequence computing the local cohomology of a cellular ind-coherent sheaf $\cF$ from the algebraic local cohomology of its cohomology, see \Cref{prop:localcohomssnew},
\[
E^2_{s,t}\cong(H^{s}_{\frak p}H^*(\mathbb{G},\cF)_{\frak p})_{t} \implies H^{s+t}(\mathbb{G},\Gamma_{\frak p}\cF),
\]
generalizing a similar spectral sequence for finite groups and the trivial module constructed earlier by Greenlees and Lyubeznik \cite{green_lyub}.

As one application of our framework, we formulate and prove an analog of the chromatic splitting conjecture for Noetherian $\mathbb{E}_{\infty}$-ring spectra in the companion paper \cite{bhv3}. In the present paper, we instead turn to the study of the relative dualizing module and Gorenstein duality for $\bE_{\infty}$-ring spectra in \Cref{sec:gorenstein}. Generalizing the notion of Gorenstein for discrete commutative rings, we call a structured ring spectrum $R$ absolute Gorenstein whenever it satisfies Gorenstein duality for all homogeneous prime ideals $\fp$ in $\pi_*R$, which means that the local cohomology $\pi_*\Gamma_{\fp}R$ of $R$ at $\fp$ is given by the injective hull of the residue field $(\pi_*R)/\fp$. This notion is closely related to Greenlees' definition of Gorenstein duality, see \cite{dgi_duality,greenlees_hi}, but does not make reference to a residue ring spectrum $k$ lifting $(\pi_*R)/\fp$. 

Furthermore, recall that the forgetful functor $f^*\colon \Mod_S \to \Mod_R$ corresponding to a map of ring spectra $f\colon R \to S$ has both a left and a right adjoint, given by induction $f_*$ and coinduction $f_!$, respectively. The coinduced module $\omega_f = f_!(R) \in \Mod_S$ is the relative dualizing module; for example, if $f^*$ preserves compact objects, then it satisfies Grothendieck duality:
\[
f_*(-) \otimes \omega_f \simeq f_!(-),
\]
see \cite{bal_gn_dual}. Our next theorem computes the local cohomology of the relative dualizing module under certain conditions, thereby relating global and local duality. 

\begin{thm*}[\Cref{thm:bc}]
Suppose $f\colon R \to S$ is a morphism of Noetherian $\bE_{\infty}$-ring spectra with $R$ Gorenstein, such that $S$ is a compact $R$-module, then for every homogeneous prime ideal $\fp$ in $\pi_*S$ there is an isomorphism 
\[
\pi_*(\Gamma_{\fp}\omega_f) \cong I_{\fp}
\]
of $\pi_*S$-modules, where $I_{\fp}$ is the injective hull of the residue field $(\pi_*S)/{\fp}$.
\end{thm*}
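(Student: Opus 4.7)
The plan is to identify $\omega_f$ as a ``pointwise dualizing module'' on $S$ by combining the Grothendieck duality equivalence for the finite morphism $f\colon R\to S$ with the Gorenstein hypothesis on $R$, and then to deduce the claimed homotopical formula by reducing to classical graded local duality prime by prime.

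First I would exploit compactness of $S$ as an $R$-module to invoke the derived projection formula
\[
f_!(M) \simeq \omega_f \otimes_R M,
\]
naturally in $M \in \Mod_R$. Setting $\fq = f^{-1}(\fp) \subseteq \pi_*R$ and specializing to $M = \Gamma_\fq R$ (local cohomology computed in $\Mod_R$) yields an identification
\[
\omega_f \otimes_R \Gamma_\fq R \simeq f_!(\Gamma_\fq R) \simeq \iHom_R(S, \Gamma_\fq R).
\]

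Next I would compare $\Gamma_\fp \omega_f$ (local cohomology in $\Mod_S$) with the object above. Because $S$ is finite over $R$, the $\pi_*S$-module support of $\omega_f \otimes_R \Gamma_\fq R$ lies in the finite set of primes of $\pi_*S$ contracting to $\fq$; localizing at $\fp$ isolates the contribution from $\fp$ alone and should produce a natural equivalence
\[
\Gamma_\fp \omega_f \simeq (\omega_f \otimes_R \Gamma_\fq R)_\fp.
\]
Finally, applying the Gorenstein hypothesis gives $\pi_*\Gamma_\fq R \cong I_\fq^R$, and invoking the standard graded commutative algebra identification of injective hulls under finite extensions, namely $I_\fp \cong \Hom_{(\pi_*R)_\fq}((\pi_*S)_\fp, I_\fq^R)$, I would conclude that $\pi_*(\Gamma_\fp \omega_f) \cong I_\fp$.

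The main obstacle lies in the second step: while the projection formula cleanly expresses $f_!$ as tensoring with $\omega_f$, the functor $\Gamma_\fp$ is inherently an $S$-module notion, and several primes of $\pi_*S$ may contract to the same $\fq \subseteq \pi_*R$. Verifying that $\Gamma_\fp$ commutes appropriately with the Grothendieck duality equivalence, and that the $\fp$-local piece of $\omega_f \otimes_R \Gamma_\fq R$ agrees with $\Gamma_\fp \omega_f$, requires a careful analysis of the support decomposition along the fibres of $f$ and compatibility of the local duality framework of \Cref{thm:localduality} with change of ring spectra—this is where the technical work is concentrated.
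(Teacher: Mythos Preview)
Your proposal is correct and follows essentially the same strategy as the paper: apply the projection formula $f_!(-)\simeq \omega_f\otimes_S f_*(-)$ to $\Gamma_{\fq}R$ with $\fq=\fp\cap\pi_*R$, use the absolute Gorenstein hypothesis to identify $\Gamma_{\fq}R$ with a shift of $T_R(I_{\fq})$, and then decompose over the finite fibre $\{\fp'\mid \fp'\cap\pi_*R=\fq\}$. The only cosmetic differences are that the paper isolates the $\fp$-summand by tensoring with $\Gamma_{\fp}S$ and invoking the orthogonality relation $\Gamma_{\fp}S\otimes\Gamma_{\fp'}S\simeq 0$ for $\fp\neq\fp'$, rather than by localizing at $\fp$, and that it stays at the spectrum level via the identification $f_!T_R(I_{\fq})\simeq\bigoplus_{\fp'} T_S(I_{\fp'})$ instead of passing to homotopy and citing the classical isomorphism $\Hom_{\pi_*R}(\pi_*S,I_{\fq})_{\fp}\cong I_{\fp}$.
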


This result has two consequences. Firstly, the given formula lifts to an equivalence $(\Gamma_{\fp}S) \otimes_S \omega_f \simeq \mathbb{I}_{\fp}$ of $S$-modules, where $\mathbb{I}_{\fp}$ denotes the Brown--Comenetz dual of $S$ associated to $\fp$. This equivalence can be read as a version of Gorenstein duality for $S$ twisted by the relative dualizing module $\omega_f$. In other words, under the assumptions of the theorem, Gorenstein duality for $R$ ascends to $S$ up to a twist. Inspired by Noether normalization in commutative algebra, for a given ring spectrum $S$ one can often find a morphism $f$ satisfying the conditions of the theorem and such that $\omega_f$ is an invertible $S$-module. When this is the case, we refer to $f$ as a Gorenstein normalization of $S$.

\begin{cor*}
If $S$ admits Gorenstein normalization, then $S$ has Gorenstein duality with twist $\omega_f$ for all prime ideals $\fp$ in $\pi_*S$, i.e., it satisfies $(\Gamma_{\fp}S) \otimes_S \omega_f \simeq \mathbb{I}_{\fp}$.
\end{cor*}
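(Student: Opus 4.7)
The plan is to invoke \Cref{thm:bc} directly, since a Gorenstein normalization is designed to meet its hypotheses. By definition, $f \colon R \to S$ is a map of Noetherian $\bE_\infty$-ring spectra with $R$ Gorenstein and $S$ compact as an $R$-module; the additional requirement that $\omega_f$ be invertible is not needed to run the theorem, but is what licenses the interpretation of its output as twisted duality for $S$ itself. Applying \Cref{thm:bc} therefore yields, for every homogeneous prime $\fp \subset \pi_*S$, the isomorphism $\pi_*(\Gamma_\fp \omega_f) \cong I_\fp$.

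The next step is to upgrade this $\pi_*$-level identification to an equivalence $\Gamma_\fp \omega_f \simeq \mathbb{I}_\fp$ of $S$-modules, which is the content of the first consequence flagged immediately after \Cref{thm:bc}. The natural strategy is to construct a comparison map realizing the given $\pi_*$-isomorphism and then observe that both sides are $\fp$-local and $\fp$-torsion, so that the computation of homotopy groups is enough to force the map to be an equivalence; alternatively one appeals to the universal characterization of $\mathbb{I}_\fp$ as a Brown--Comenetz-type representing object for Matlis duality. Either way, the lift is essentially formal once the homotopy groups are pinned down.

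Finally, because local cohomology is smashing on $\Mod_S$, we have $\Gamma_\fp \omega_f \simeq (\Gamma_\fp S) \otimes_S \omega_f$, and combining this with the previous equivalence produces $(\Gamma_\fp S) \otimes_S \omega_f \simeq \mathbb{I}_\fp$, which is precisely the desired twisted Gorenstein duality. No serious obstacle arises in this argument: the substantive work is packaged inside \Cref{thm:bc}, and the invertibility of $\omega_f$ is used only conceptually, to ensure that the resulting equivalence can be read (after tensoring with $\omega_f^{-1}$) as genuine duality for $S$ rather than as a purely relative statement.
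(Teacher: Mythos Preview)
Your proposal is correct, but it takes a slight detour that the paper avoids. You work from the introduction's statement of \Cref{thm:bc}, which records only the isomorphism $\pi_*(\Gamma_{\fp}\omega_f)\cong I_{\fp}$, and then you propose to upgrade this to a spectrum-level equivalence via \Cref{lem:homotopycheck} (or the Brown--Comenetz characterization) before invoking the smashing property of $\Gamma_{\fp}$. In the body of the paper, however, \Cref{thm:bc} is actually stated and proved directly at the spectrum level: its conclusion is the equivalence $\Gamma_{\fp}S\otimes_S\omega_f\simeq\Sigma^{\nu+d}T_S(I_{\fp})$, with the $\pi_*$-statement obtained only afterwards as an ``in particular''. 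So the corollary is literally a restatement of that equivalence (with $\mathbb{I}_{\fp}$ denoting the appropriate shift of $T_S(I_{\fp})$), and no upgrade step is needed. Your observation that invertibility of $\omega_f$ plays no role in the argument itself, and is relevant only for the interpretation, is accurate and matches the paper's use of the hypothesis.
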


We then exhibit several examples of ring spectra which admit (twisted) Gorenstein duality, as for instance the $\bE_{\infty}$-algebra of cochains $C^*(BG,k)$ on a compact Lie group $G$ with coefficients in a field $k$. This recovers and generalizes Benson's conjecture as proven by Benson and Greenlees~\cite{bg_localduality} for compact Lie groups satisfying a certain orientability condition. 

Secondly, we may interpret the above theorem as a shadow of a residual complex formalism in derived algebraic geometry. To see this, recall that Grothendieck and Hartshorne \cite{hartshorne_resduality} construct dualizing complexes by gluing together local dualizing complexes according to a natural stratification of a given scheme. Heuristically, the local cohomology at $\fp$ of the resulting complex should then isolate the local dualizing complex at $\fp$, which is precisely the content of our theorem above. This observation motivates to seek a construction of residual complexes for spectral schemes; we hope to return to this point in a future paper.

\subsection{Notation and conventions}

Throughout this paper, we will work in the setting of $\infty$-categories as developed in \cite{htt,ha}, and will use the local duality framework described in \cite{bhv}. In particular, all constructions will implicitly be assumed to be derived, unless otherwise stated. A \emph{stable category} is a symmetric monoidal stable $\infty$-category $\cC=(\cC,\otimes)$ which is compactly generated by dualizable objects and whose monoidal product $\otimes$ commutes with colimits separately in each variable. Writing $A$ for a unit of the stable category $(\cC,\otimes)$, we define the (Spanier--Whitehead) dual of an object $X \in \cC$ by $X^{\vee} = \iHom_{\cC}(X,A)$, where $\iHom_{\cC}$ denotes the internal mapping object of $\cC$; note that, under our assumptions on $\cC$, $\iHom_{\cC}$ exists for formal reasons. 

A subcategory of $\cT \subseteq \cC$ is called thick if it is closed under finite colimits, retracts, and desuspensions, and $\cT$ is called localizing if it is closed under all filtered colimits as well. Furthermore, a thick subcategory $\cT$ in $\cC$ is a thick ideal if $T \otimes X \in \cT$ for all $T \in \cT$ and all $X \in \cC$, and the notion of localizing ideal is defined analogously. For a collection of objects $\cS \subseteq \cC$, we denote the smallest localizing subcategory of $\cC$ containing $\cS$ by $\Loc_{\cC}(\cS)$, and $\Loc^{\otimes}_{\cC}(\cS)$ for the localizing ideal of $\cC$ generated by $\cS$. If the ambient category is clear from context, we also omit the subscript $\cC$. 

All discrete rings $R$ in this paper are assumed to be commutative and graded, and all ring-theoretic notions are implicitly graded. In particular, an $R$-module $M$ refers to a graded $R$-module and we write $\Mod_R$ for the abelian category of discrete graded $R$-modules. Prime ideals in $R$ will be denoted by fraktur letters $\fp,\fq,\fr$ and are always homogeneous, so that $\Spec(R)$ refers to the Zariski spectrum of homogeneous prime ideals in $R$. 

Finally, our grading conventions are homological; differentials always decrease degree. Thus, for example, $\pi_{-i}C^*(BG,k) \cong H^{i}(G,k)$, i.e., $C^*(BG,k)$ is a coconnective $\bE_{\infty}$-ring spectrum.

\subsection*{Acknowledgements}

We would like to thank John Greenlees, Srikanth Iyengar, Henning Krause, and Vesna Stojanoska for helpful discussions, and the referee for many useful comments and corrections. Moreover, we are grateful to the Max Planck Institute for Mathematics for its hospitality, funding two week-long visits of the third-named author.

\section{Local duality contexts and recollements}\label{sec:abstract}

The goal of this section is to set up an abstract framework in which local duality for stable categories can be conveniently studied. The material of the first two subsections is well known to the experts, and can be skipped on a first reading. 

\subsection{Abstract local duality}

We recall some definitions, constructions, and results from \cite{bhv}, in a slightly more general setup which allows localizing subcategories that are not necessarily tensor ideals. This extra generality is required for the applications to cellular objects in later sections. 

\begin{defn}
A symmetric monoidal stable $\infty$-category $\cC = (\cC,\otimes,A)$ with symmetric monoidal structure $\otimes$ and unit $A$ is called a stable category if it satisfies the following conditions:
\begin{enumerate}
	\item The $\infty$-category $\cC$ is compactly generated by a set $\cG$ of (strongly) dualizable objects in $\cC$. In particular, $\cC$ is presentable.  
	\item The symmetric monoidal product $\otimes$ preserves colimits separately in each variable.
\end{enumerate}
\end{defn}

We refer the reader to \cite[Sec.~2]{bhv} for a summary of the basic properties of stable categories and their localizing subcategories. 

For a localizing subcategory $\cT \subseteq \cC$, we often write $\iota_{\cT}\colon \cT \to \cC$ for the corresponding inclusion functor; when no confusion is likely to occur, $\iota_{\cT}$ is applied implicitly whenever necessary.  
Moreover, we denote by $\cT^{\perp}$ the left-orthogonal of $\cT$ in $\cC$, i.e., the thick subcategory of $\cC$ generated by all objects $X$ for which $\Hom(\cT,X)\simeq 0$. 

\begin{defn}
A local duality context $(\cC,\cT)$ consists of
\begin{enumerate}
	\item a stable category $\cC$, and 
	\item a localizing subcategory $\cT \subseteq \cC$ which is generated by compact objects in $\cC$. 
\end{enumerate}
If $\cC$ is monogenic and $\cT$ is generated by a single compact object $T \in \cC$, then we sometimes write $(\cC,T)$ for $(\cC,\Loc(T))$. 
\end{defn}
As an example for the reader to keep in mind, one can consider $(\Mod_R,R/I)$ for $R$ a Noetherian commutative ring and $I \subseteq R$ an ideal generated by a regular sequence, see \cite[Sec.~3]{bhv}.
\begin{rem}
This definition is a mild generalization of the one given in \cite{bhv}, in which we additionally required $\cT$ to be a localizing ideal. The main reason for extending our definition is the application to categories of cellular objects in \Cref{ssec:cell}. Since the proofs work equally well for localizing subcategories, we will not reproduce the arguments here; see also the discussion in \cite[Rem.~3.1.3]{hps_axiomatic}.
\end{rem}

A local duality context generates a number of categories and functors with many good properties, which are summarized in the next theorem. 

\begin{thm}\label{thm:abstractlocalduality}
Let $(\cC,\otimes,A)$ be a stable category compactly generated by dualizable objects. If $(\cC,\cT)$ is a local duality context, then $\cT^{\perp} \subseteq \cC$ is a localizing subcategory and the following properties hold:
\begin{enumerate}
	\item The inclusion $\iota_{\cT}$ has a right adjoint $\Gamma = \Gamma_{\cT}$, and the inclusions $\iota_{\cT^{\perp}}$ and $\iota_{\cT^{\perp\perp}}$ have left adjoints $L = L_{\cT}$ and $\Lambda = \Lambda_{\cT}$, respectively, which induce natural cofiber sequences
 \[
 \Gamma X \longrightarrow X \longrightarrow LX  \quad \text{and} \quad \Delta X \longrightarrow X \longrightarrow \Lambda X
 \]
for all $X \in \cC$. Here, $\Delta$ is right adjoint to $\iota_{\cT^{\perp}}$. In particular, $\Gamma$ is a colocalization functor and both $L$ and $\Lambda$ are localization functors. 
	\item Both $\Gamma \colon \cC \to \cC$ and $L\colon \cC \to \cC$ preserve all colimits. Moreover, $L$ preserves compact objects. 
	\item The functors $\Lambda'=\Lambda \iota_{\cT}\colon \cT \to \cT^{\perp\perp}$ and $\Gamma'=\Gamma \iota_{\cT^{\perp\perp}}\colon \cT^{\perp\perp} \to \cT$ are mutually inverse equivalences of stable categories. Moreover, there are natural equivalences of functors
\[
\xymatrix{\Lambda \Gamma \ar[r]^-{\sim} & \Lambda & \mathrm{and} &\Gamma \ar[r]^-{\sim} & \Gamma \Lambda.}
 \]
	\item The functors $(\Gamma,\Lambda)$ form an adjoint pair, so that we have a natural equivalence 
\[
\xymatrix{\Hom(\Gamma X,Y) \simeq \Hom(X,\Lambda Y)}
\]
for all $X,Y \in \cC$. Similarly, viewed as endofunctors on $\cC$, $L$ is left adjoint to $\Delta$. 
	\item There is natural equivalence $\Lambda L \simeq \Sigma\Delta\Gamma$.
	\item There is a homotopy pullback square of functors 
	\[
	\xymatrix{\Id \ar[r] \ar[d] & \Lambda \ar[d] \\
	L \ar[r] & L\Lambda,}
	\]
	which is usually referred to as the fracture square. 
\end{enumerate}
\end{thm}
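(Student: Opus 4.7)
My plan is to verify the six parts essentially in order, since each relies on the cofiber sequences and Hom-vanishing lemmas established in the earlier ones. The whole proof rests on two inputs: presentability of $\cC$ together with compact generation of $\cT$ let us invoke the adjoint functor theorem to produce all relevant (co)reflections, and the compactness of the generators of $\cT$ ensures that $\cT^\perp$ is itself closed under all colimits, which in turn lets us iterate to obtain the reflection onto $\cT^{\perp\perp}$. The recurring pattern will be that every identity in parts (3)--(6) is obtained by applying one of $\Gamma, L, \Delta, \Lambda$ to one of the two cofiber sequences from part (1) and invoking a targeted vanishing lemma.

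For part (1), the right adjoint $\Gamma$ of the colimit-preserving inclusion $\iota_\cT$ exists by the presentable adjoint functor theorem. The subcategory $\cT^\perp$ is localizing because it is characterized by $\Hom(T_\alpha,-)\simeq 0$ for a set of compact generators $T_\alpha$ of $\cT$, and compactness makes this condition preserve all colimits; hence $L$ and $\Delta$ arise as the left and right adjoints to $\iota_{\cT^\perp}$, and iterating the same argument produces $\Lambda$ on $\cT^{\perp\perp}$. The two cofiber sequences encode the universal properties of these (co)reflections. Part (2) is then standard: $L$ preserves colimits as a left adjoint and preserves compact objects because $\iota_{\cT^\perp}$ preserves filtered colimits, while $\Gamma \simeq \fib(\Id \to L)$ preserves colimits because filtered colimits commute with finite limits in a compactly generated stable category.

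The key content lies in part (3), which I would base on two vanishing lemmas. First, $\Gamma Y \simeq 0$ for $Y \in \cT^\perp$, since for such $Y$ the cofiber sequence degenerates to $\Gamma Y \to Y \xrightarrow{\sim} LY$. Second, $\Lambda Z \simeq 0$ for $Z \in \cT^\perp$, since $\Hom(\cT^\perp,\cT^{\perp\perp}) = 0$ by the very definition of $\cT^{\perp\perp}$, so $Z \in {}^\perp(\cT^{\perp\perp})$ is killed by $\Lambda$. Applying $\Lambda$ to $\Gamma X \to X \to LX$ and invoking the second vanishing gives $\Lambda\Gamma \xrightarrow{\sim} \Lambda$; dually, applying $\Gamma$ to $\Delta X \to X \to \Lambda X$ and invoking the first gives $\Gamma \xrightarrow{\sim} \Gamma\Lambda$. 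Specialized to $X \in \cT^{\perp\perp}$ and $X \in \cT$, these identities say that $\Gamma'$ and $\Lambda'$ are mutually inverse equivalences. Part (4) follows by the same orthogonality machinery: the adjunction is established as $\Hom(\Gamma X,Y) \simeq \Hom(\Gamma X,\Lambda Y) \simeq \Hom(X,\Lambda Y)$, where the two steps use $\Hom(\cT,\cT^\perp)=0$ and $\Hom(\cT^\perp,\cT^{\perp\perp})=0$ respectively; the adjunction $(L,\Delta)$ is analogous.

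For part (5), I would compute the fiber of the natural map $\Delta \to L$ in two different ways. Applying $\Delta$ to $\Gamma \to \Id \to L$ and using $\Delta|_{\cT^\perp} \simeq \Id$ produces a fiber sequence $\Delta\Gamma \to \Delta \to L$, while applying $L$ to $\Delta \to \Id \to \Lambda$ and using $L|_{\cT^\perp} \simeq \Id$ produces $\Delta \to L \to L\Lambda$; comparing cofibers under the identification $\cofib = \Sigma\fib$ then yields the equivalence in (5). Finally, for the fracture square in (6), I would use that a commuting square in a stable $\infty$-category is cartesian iff the induced map between vertical fibers is an equivalence; here both fibers compute as $\Gamma$ and $\Gamma\Lambda$ respectively, and their comparison map is precisely the equivalence $\Gamma \simeq \Gamma\Lambda$ from part (3). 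The main bookkeeping difficulty I anticipate is the non-ideal setting: without $\cT \otimes \cC \subseteq \cT$ one cannot appeal to symmetric duality, and the direction of each Hom-vanishing---either $\Hom(\cT,\cT^\perp)=0$ or $\Hom(\cT^\perp,\cT^{\perp\perp})=0$---must be tracked individually at every step.
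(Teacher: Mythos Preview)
Your argument is correct and is essentially the standard one. The paper itself does not give a proof: it simply cites \cite[Thm.~2.21]{bhv} for parts (1)--(4) and \cite[Cor.~2.26, Rem.~2.29]{bhv} for parts (5) and (6), so there is no in-paper argument to compare against. What you have written is a faithful sketch of the proofs appearing in those references (and in \cite[Thm.~3.3.5]{hps_axiomatic}), carried out with the correct care about which orthogonality relation---$\Hom(\cT,\cT^{\perp})=0$ versus $\Hom(\cT^{\perp},\cT^{\perp\perp})=0$---is being used at each step; this is exactly the bookkeeping needed in the non-ideal setting, as the paper itself remarks.

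One small point worth flagging concerns part (5). Your computation produces the equivalence $L\Lambda \simeq \Sigma\Delta\Gamma$: you identify the map $\Delta \to L$ in both fiber sequences with the composite of the counit and unit $\Delta \to \Id \to L$ (which is correct, by naturality), and then read off fiber and cofiber. This is Greenlees' Warwick duality and is the content of \cite[Cor.~2.26]{bhv} that the paper cites. The paper's displayed statement literally reads $\Lambda L \simeq \Sigma\Delta\Gamma$, but since $\Lambda$ annihilates $\cT^{\perp}$ one has $\Lambda L \simeq 0$ identically, so the printed order is almost certainly a typographical slip; the Tate construction $t_{\cT}=L\Lambda$ defined immediately after the theorem, and the corollary deduced from (5), both confirm that $L\Lambda$ is what is meant. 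Your proof is the right one.
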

\begin{proof}
The first four parts are given in \cite[Thm.~2.21]{bhv} as an $\infty$-categorical version of \cite[Thm.~3.3.5]{hps_axiomatic}, while Parts (5) and (6) are contained in \cite[Cor.~2.26, Rem.~2.29]{bhv} extending work of Greenlees~\cite{greenlees_axiomatic}. 
\end{proof}

The categories and functors considered in this theorem can be organized in the following diagram of adjoints,
\begin{equation}\label{eq:abstractdiagram}
\begin{gathered}
\xymatrix{& \cT^{\perp} \ar[d] \ar@{-->}@/^1.4pc/[ddr] \\
& \cC \ar@<1ex>[u]^{L} \ar@<-1ex>[u]_{\Delta} \ar@<0.5ex>[ld]^{\Gamma} \ar@<0.5ex>[rd]^{\Lambda} \\
\cT \ar@<0.5ex>[ru] \ar[rr]_{\sim} \ar@{-->}@/^1.4pc/[ruu] & & \cT^{\perp\perp}, \ar@<0.5ex>[lu]}
\end{gathered}
\end{equation}
where the dotted arrows indicate left orthogonality.

\begin{rem}
We refer to the equivalence of \Cref{thm:abstractlocalduality}(4) as abstract local duality. This choice of terminology is justified by specializing to classical local duality contexts, as for example $(\Mod_R,R/I)$ for $R$ a suitable commutative ring and $I \subseteq R$ an ideal, see \cite[Sec.~3]{bhv}. In the case where $R = \Z$ so that $\Mod_{\Z}$ refers to the (unbounded) derived $\infty$-category of abelian groups, taking $I = (p)$, we get the usual notations of $p$-torsion, $p$-local, and $p$-complete $\Z$-modules, studied in, for example, \cite[Sec.~3.1]{dwyer_complete_2002}. 
\end{rem}

The only difference when working with a localizing category $\cT$ which is not closed under tensoring with objects in $\cC$ occurs in Part (3) of \Cref{thm:abstractlocalduality}, where the cocontinuity of $\Gamma$ and $L$ does not imply that they are smashing. For completeness, we state the case of a localizing ideal separately.

\begin{lem}\label{lem:smashing}
Suppose $(\cC,\cT)$ is a local duality context and assume that $\cT$ is a localizing ideal, i.e., $\cT$ is closed under tensoring with arbitrary objects in $\cC$. The left orthogonal $\cT^{\perp}$ is then a localizing ideal as well, and both $\Gamma$ and $L$ are smashing functors, i.e., $\Gamma(X) \simeq X \otimes \Gamma(A)$ for all $X \in \cC$ and similarly for $L$. 
\end{lem}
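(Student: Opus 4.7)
The plan is to separate the statement into two claims that feed into each other: first that $\cT^{\perp}$ is a localizing ideal, and then that once both $\cT$ and $\cT^{\perp}$ are closed under tensoring with arbitrary objects of $\cC$, the smashing property follows formally by tensoring the canonical cofiber sequence for the unit with $X$ and invoking the uniqueness of the local duality decomposition from \Cref{thm:abstractlocalduality}(1).

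For the first claim, fix $Y\in\cT^{\perp}$ and $X\in\cC$; the goal is to show $\Hom(T,X\otimes Y)\simeq 0$ for every $T\in\cT$. Because $\Hom(-,X\otimes Y)$ sends colimits to limits, the class of $T$ satisfying this vanishing is localizing, so it suffices to verify it for a compact generating set of $\cT$ provided by the hypothesis on the local duality context. For such a compact $T$, I would write $X$ as a filtered colimit $X\simeq\colim_{\alpha}X_{\alpha}$ of the compact dualizable generators of $\cC$; using that $\otimes$ commutes with colimits, then pulling the colimit out of $\Hom(T,-)$ by compactness of $T$, and finally using dualizability of each $X_\alpha$ to rewrite
\[
\Hom(T,X_{\alpha}\otimes Y)\simeq \Hom(T\otimes X_{\alpha}^{\vee},Y),
\]
the conclusion follows because $\cT$ being an ideal forces $T\otimes X_{\alpha}^{\vee}\in\cT$, so each term vanishes by assumption on $Y$.

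For the smashing claim, I would apply the exact functor $X\otimes-$ to the cofiber sequence $\Gamma A\to A\to LA$ of \Cref{thm:abstractlocalduality}(1) to obtain a cofiber sequence
\[
X\otimes\Gamma A\longrightarrow X\longrightarrow X\otimes LA.
\]
Here $X\otimes\Gamma A\in\cT$ because $\cT$ is a localizing ideal and $\Gamma A\in\cT$, while $X\otimes LA\in\cT^{\perp}$ by the first step applied to $LA\in\cT^{\perp}$. Since the decomposition of $X$ as a cofiber sequence with left-hand term in $\cT$ and right-hand term in $\cT^{\perp}$ is unique up to canonical equivalence (comparing with $\Gamma X\to X\to LX$ by mapping compact objects of $\cT$ in and objects of $\cT^{\perp}$ out), this identifies $\Gamma X\simeq X\otimes\Gamma A$ and $LX\simeq X\otimes LA$.

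The main subtlety is the juggling between compactness and dualizability in the first step: our stable category is only required to be compactly generated by dualizable objects, so a general compact object of $\cT$ need not itself be dualizable, which is why the argument routes through a colimit presentation of $X$ by dualizable generators rather than applying duality to $T$ directly. Once this is handled cleanly, the smashing conclusion is essentially a formal consequence of uniqueness.
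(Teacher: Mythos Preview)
Your argument is correct and is essentially the standard one from \cite[Thm.~3.3.5]{hps_axiomatic}, which is exactly what the paper cites in lieu of a proof (together with \cite[Thm.~2.21]{bhv} for the $\infty$-categorical version); so your approach matches the paper's.

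One small remark on your closing paragraph: in this setup the compact generators of $\cT$ are by definition compact in $\cC$, hence lie in the thick subcategory generated by the dualizable generators and are therefore themselves dualizable. The real reason you must dualize on the $X$-side rather than the $T$-side is not that $T$ might fail to be dualizable, but that dualizing $T$ only yields $\Hom(T,X\otimes Y)\simeq\Hom(A,T^{\vee}\otimes X\otimes Y)$, and knowing $T^{\vee}\otimes X\in\cT$ together with $Y\in\cT^{\perp}$ does not directly force this to vanish. Your route through a filtered colimit of dualizable $X_\alpha$ (equivalently: checking that $\{X:X\otimes Y\in\cT^{\perp}\}$ is a localizing subcategory containing the dualizable generators) is the right move, and your proof stands as written.
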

\begin{proof}
This is proven as in \cite[Thm.~3.3.5]{hps_axiomatic}; for the $\infty$-categorical version of this result, see \cite[Thm.~2.21]{bhv}.
\end{proof}

\subsection{Recollements}

In order to facilitate the comparison between our construction of local cohomology and local homology functors to other approaches, we now explain how to extract two equivalent recollements from a local duality context. These correspond to choosing either of the equivalent subcategories $\cT$ or $\cT^{\perp\perp}$. For a more detailed discussion of recollements in the setting of $\infty$-categories, see \cite[A.8]{ha}.

\begin{defn}\label{defn:recollement}
A sequence of functors of stable $\infty$-categories
\[
\xymatrix{\cC_0 \ar[r]^-g & \cC \ar[r]^-f & \cC_1}
\]
is a recollement if it is both a localization and a colocalization sequence. More explicitly, this means that $f$ and $g$ admit left and right adjoints $(f_* \dashv f \dashv f_!)$ and $(g_* \dashv g \dashv g_!)$, respectively, forming a colocalization and a localization sequence
\[
\xymatrix{\cC_0 \ar[r]^-g & \cC \ar[r]^-f \ar@<1ex>[l]^-{g_*} & \cC_1 \ar@<1ex>[l]^-{f_*} & \text{and} & \cC_0 \ar[r]_-g & \cC \ar[r]_-f\ar@<-1ex>[l]_-{g_!} & \cC_1  \ar@<-1ex>[l]_-{f_!}.}
\]
For more details and localization and colocalization sequences (at least on the level of triangulated categories) we refer the reader to \cite[Sec.~3]{krausestablederivedcat}. 

A recollement can be displayed more compactly as:
\begin{equation}\label{eq:recollementlabels}
\xymatrix{\cC_0 \ar[r]|-g & \cC \ar[r]|-f \ar@<1ex>[l]^-{g_*} \ar@<-1ex>[l]_-{g_!} & \cC_1 \ar@<1ex>[l]^-{f_*} \ar@<-1ex>[l]_-{f_!}.}
\end{equation}
\end{defn} 

\begin{rem}
The notions of localization sequence and colocalization sequence are dual to each other. In order to avoid confusion, our convention is to always read them from left to right.
\end{rem}

The next result collects a number of basic properties of recollements that we will use frequently throughout this paper. 

\begin{lem}\label{lem:recollementproperties}
Suppose $\cC_0 \xrightarrow{g} \cC \xrightarrow{f} \cC_1$ is a recollement of compactly generated stable $\infty$-categories with notation as in \eqref{eq:recollementlabels}, then the following properties are satisfied:
\begin{enumerate}
	\item The functors $f_*$, $g$, and $f_!$ are fully faithful.  
	\item The functors $f_*$ and $g_*$ preserve compact objects. 
	\item In the colocalization sequence, $g$ exhibits $\cC_0$ as the kernel of  $f$, and $g_*$ exhibits $\cC_0$ as the Verdier quotient $\cC/\cC_1$. Dually, $f_!$ exhibits $\cC_1$ as the kernel of $g_!$, and $f$ exhibits $\cC_1$ as the quotient $\cC/\cC_0$.
\end{enumerate}
\end{lem}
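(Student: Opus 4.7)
My plan is to unpack the definitions of (co)localization sequence and apply standard adjunction arguments. The three parts of the lemma are largely independent.

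For (1), the full faithfulness statements are essentially built into the definitions of (co)localization sequence. In a localization sequence, the right adjoint $f_!$ exhibits $\cC_1$ as a reflective subcategory of $\cC$, so $f_!$ is fully faithful; dually, in a colocalization sequence the left adjoint $f_*$ exhibits $\cC_1$ as a coreflective subcategory, so $f_*$ is fully faithful. The full faithfulness of $g$ is part of the definition of either sequence, since $g$ is the inclusion of the kernel along $f$. I would refer to \cite{krausestablederivedcat} and \cite[A.8]{ha} for this standard background.

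For (2), I would invoke the standard fact that in an adjunction $L \dashv R$ of compactly generated stable $\infty$-categories, $L$ preserves compact objects if and only if $R$ preserves filtered colimits (equivalently, all colimits in the exact stable setting). Applied to $f_* \dashv f$: the functor $f$ admits a further right adjoint $f_!$, so $f$ preserves all colimits, and hence $f_*$ preserves compact objects. The same argument applied to $g_* \dashv g$ (using the further right adjoint $g_!$ of $g$) gives the claim for $g_*$.

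For (3), the identification of $\cC_0$ with the kernel of $f$ along $g$ is part of the definition of the recollement, and dually $f_!$ exhibits $\cC_1$ as the kernel of $g_!$. The substantial claim is that $g_*$ exhibits $\cC_0$ as the Verdier quotient $\cC/\cC_1$, and dually that $f$ exhibits $\cC_1$ as $\cC/\cC_0$. The key inputs are the orthogonality relation $g_* f_* \simeq 0$ (part of the recollement structure), so that $g_*$ annihilates the essential image of $f_*$, together with the counit equivalence $g_* g \simeq \Id_{\cC_0}$. I would then verify the universal property of the Verdier quotient directly: any exact colimit-preserving functor out of $\cC$ that vanishes on the image of $f_*$ factors essentially uniquely through $g_*$, using the recollement-theoretic reconstruction of an object $X \in \cC$ from its restrictions $fX$ and $g_*X$ via the fiber/cofiber sequences $f_* f X \to X \to g g_* X$ (and the dual one) that form part of the recollement.

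The main obstacle I anticipate is in part (3): namely, the careful verification of the $\infty$-categorical universal property of the Verdier quotient, showing that $g_*$ is a localization at the morphisms whose cofiber lies in the image of $f_*$, rather than merely vanishing on that image. This requires tracking universal properties up to contractible choice in the $\infty$-category of presentable stable $\infty$-categories, for which the machinery in \cite[A.8]{ha} should suffice.
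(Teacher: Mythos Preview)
Your proposal is correct and, for parts (1) and (2), essentially identical to the paper's argument: the paper also treats (1) as definitional and proves (2) by observing that $f$ and $g$ preserve all colimits (since they admit right adjoints $f_!$ and $g_!$), then invoking \cite[Prop.~5.5.7.2]{htt} for the equivalence between a right adjoint preserving colimits and its left adjoint preserving compact objects.

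The one point of divergence is part (3). You anticipate having to verify the $\infty$-categorical universal property of the Verdier quotient by hand, and flag this as the main obstacle. The paper, by contrast, treats (3) as purely definitional: it refers to \cite[Sec.~3]{krausestablederivedcat} for the notion of localization sequence, in which the identifications of $\cC_0$ as $\ker f$ and of $\cC_1$ as the quotient $\cC/\cC_0$ (and dually) are already built in. So the ``obstacle'' you identify is absorbed into the cited definition rather than argued from scratch. Your direct verification via the cofiber sequence $f_*fX \to X \to gg_*X$ and the vanishing $g_*f_* \simeq 0$ is a valid route and would make the argument self-contained, but it is more than what the paper does; what you gain is independence from the particular formulation of localization sequence in Krause's paper, at the cost of reproving a standard fact.
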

\begin{proof}
Statements (1) and (3) are part of the definition of a localization sequence. For (2), first note that $f$ and $g$ preserve all colimits. By \cite[Prop.~5.5.7.2]{htt}, this is equivalent to the fact that their left adjoints $f_*$ and $g_*$ preserve compact objects. 
\end{proof}

\begin{prop}\label{prop:abstractrecollement}
If $(\cC,\cT)$ is a local duality context then, using the notation introduced in \Cref{thm:abstractlocalduality}, there exist two recollements
\begin{equation}\label{eq:recollementsfromldc}
\xymatrixcolsep{3pc}
\xymatrix{\cT^{\perp} \ar[r]|-{\iota_{\cT^{\perp}}} & \cC \ar[r]|-{\Gamma} \ar@<1.2ex>[l]^-{L} \ar@<-1.2ex>[l]_-{\Delta} & \cT \ar@<1.2ex>[l]^-{\iota_{\cT}} \ar@<-1.2ex>[l]_-{\iota_{\cT^{\perp\perp}}\Lambda'} & \text{and} & \cT^{\perp} \ar[r]|-{\iota_{\cT^{\perp}}} & \cC \ar[r]|-{\Lambda} \ar@<1.2ex>[l]^-{L} \ar@<-1.2ex>[l]_-{\Delta} & \cT^{\perp\perp} \ar@<1.2ex>[l]^-{\iota_{\cT}\Gamma'} \ar@<-1.2ex>[l]_-{\iota_{\cT^{\perp\perp}}}.}
\end{equation}
Furthermore, the mutual equivalences of \Cref{thm:abstractlocalduality}(3) induce a natural equivalence between these two recollements. 
\end{prop}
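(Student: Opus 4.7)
The plan is to verify that each of the two diagrams in \eqref{eq:recollementsfromldc} is a recollement by matching the labeled adjoints against the ones provided by \Cref{thm:abstractlocalduality}, and then to exhibit the equivalence of the two recollements induced by $\Lambda'$ and $\Gamma'$.

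For the left-hand recollement, the outer triple $L \dashv \iota_{\cT^{\perp}} \dashv \Delta$ comes directly from Part~(1) of the theorem. For the middle functor $\Gamma\colon \cC \to \cT$, the left adjoint is the inclusion $\iota_{\cT}$ by definition, and I would identify the right adjoint as $\iota_{\cT^{\perp\perp}}\Lambda'$ by combining the $(\Gamma,\Lambda)$-adjunction of Part~(4) with fully faithfulness of $\iota_{\cT}$: for $X \in \cC$ and $Y \in \cT$,
\[
\Hom_{\cT}(\Gamma X, Y) \simeq \Hom_{\cC}(\iota_{\cT} \Gamma X, \iota_{\cT} Y) \simeq \Hom_{\cC}(X, \Lambda \iota_{\cT} Y) = \Hom_{\cC}(X, \iota_{\cT^{\perp\perp}} \Lambda' Y).
\]
The symmetric computation using $\Gamma' = \Gamma \iota_{\cT^{\perp\perp}}$ identifies the left adjoint of $\Lambda\colon \cC \to \cT^{\perp\perp}$ in the right-hand recollement as $\iota_{\cT}\Gamma'$, while the right adjoint is $\iota_{\cT^{\perp\perp}}$ by Part~(1). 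Fully faithfulness of all six adjoints is automatic, since $\iota_{\cT}$, $\iota_{\cT^{\perp}}$, $\iota_{\cT^{\perp\perp}}$ are inclusions and $\Lambda', \Gamma'$ are equivalences by Part~(3).

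It remains to check that in each case the kernel of the middle functor coincides with the essential image of $\iota_{\cT^{\perp}}$. By adjunction, $\Gamma X \simeq 0$ iff $\Hom_{\cC}(\iota_{\cT} Y, X) \simeq 0$ for all $Y \in \cT$, i.e., iff $X \in \cT^{\perp}$. Dually, $\Lambda X \simeq 0$ iff $\Hom_{\cC}(X, \iota_{\cT^{\perp\perp}} Z) \simeq 0$ for all $Z \in \cT^{\perp\perp}$; combining this with the fracture triangle $\Delta X \to X \to \Lambda X$ from Part~(1) and the observation $\Delta X \in \cT^{\perp}$, one sees this is again equivalent to $X \in \cT^{\perp}$. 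Together with the fracture triangles of Part~(1), this confirms the conditions of \Cref{defn:recollement}.

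For the equivalence of the two recollements, the vertical arrows are the identities on $\cT^{\perp}$ and $\cC$, together with $\Lambda'\colon \cT \xrightarrow{\sim} \cT^{\perp\perp}$ with inverse $\Gamma'$. Compatibility of all six functors reduces to two natural equivalences: $\Lambda' \circ \Gamma \simeq \Lambda$, which follows from $\Lambda \Gamma \simeq \Lambda$ of Part~(3) together with fully faithfulness of $\iota_{\cT^{\perp\perp}}$; and $(\iota_{\cT}\Gamma') \circ \Lambda' \simeq \iota_{\cT}$, which follows from $\Gamma' \Lambda' \simeq \mathrm{Id}_{\cT}$. I expect the principal obstacle to be purely notational: one must consistently distinguish between $\Gamma, \Lambda$ as functors with targets $\cT, \cT^{\perp\perp}$ respectively (as they appear in the recollements) and as endofunctors of $\cC$ (as in Part~(4) of the theorem), inserting the inclusions $\iota_{\cT}, \iota_{\cT^{\perp\perp}}$ where needed so that all adjunctions typecheck.
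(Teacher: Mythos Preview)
Your proof is correct and uses the same ingredients as the paper, but organizes them differently. The paper argues more structurally: it first observes that \Cref{thm:abstractlocalduality} supplies two localization sequences
\[
\cT \xrightarrow{\iota_{\cT}} \cC \xrightarrow{L} \cT^{\perp}
\quad\text{and}\quad
\cT^{\perp} \xrightarrow{\iota_{\cT^{\perp}}} \cC \xrightarrow{\Lambda} \cT^{\perp\perp},
\]
then notes that dualizing the first yields a colocalization sequence which, combined with the second (or its twist by the equivalence $\Gamma' \dashv \Lambda'$), assembles into a recollement. You instead verify the recollement axioms directly by identifying each adjoint via the explicit Hom computation using Part~(4), and by checking the kernel condition from the fracture triangles. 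Your route is a bit more hands-on and makes the role of the $(\Gamma,\Lambda)$-adjunction explicit; the paper's route is terser and leans on the general fact that a localization sequence together with its dual colocalization sequence is automatically a recollement. Your closing remark about distinguishing $\Gamma,\Lambda$ as endofunctors versus as functors to $\cT,\cT^{\perp\perp}$ is exactly the bookkeeping the paper suppresses.
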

\begin{proof}
This follows from \Cref{thm:abstractlocalduality}: By construction, there are localization sequences
\begin{equation}\label{eq:twolocalseqrecollement}
\xymatrix{\cT \ar[r]^-{\iota_{\cT}} & \cC \ar[r]^-{L} & \cT^{\perp} & \text{and} & \cT^{\perp} \ar[r]^-{\iota_{\cT^{\perp}}} & \cC \ar[r]^-{\Lambda} & \cT^{\perp\perp}.}
\end{equation}
The localization sequence on the right can be modified by the equivalence of \Cref{thm:abstractlocalduality}(3), which provides a commutative diagram
\begin{equation}\label{eq:twist}
\xymatrix{\cC \ar[r]_{\Lambda} \ar[rd]_{\Lambda} & \cT^{\perp\perp} \ar@<-1ex>[l]_{\iota_{\cT^{\perp\perp}}} \ar@<-0.5ex>[d]_{\Gamma'} \\
& \cT \ar@<-0.5ex>[u]_{\Lambda'},}
\end{equation}
where the vertical functors $\Gamma'$ and $\Lambda'$ are mutual equivalences. The dual of the first localization sequence of \eqref{eq:twolocalseqrecollement} is a colocalization sequence, so we obtain the first recollement displayed in \eqref{eq:recollementsfromldc}. Modifying the left localization sequence of \eqref{eq:twolocalseqrecollement} by the equivalence instead results in the other recollement. Finally, the natural equivalence between these two recollements is induced from the equivalences of \eqref{eq:twist}.
\end{proof}

We will now exhibit a method for generating recollements starting from a category $\cC$. As a special instance of this construction, we recover Krause's stable derived category~\cite{krausestablederivedcat}.

\begin{prop}\label{prop:recollements}
Suppose $\cC_1$ is a stable category and $\cD \subseteq \cC_1$ is a small idempotent complete thick subcategory of $\cC_1$ closed under the monoidal product of $\cC_1$. If $\cD$ contains the subcategory $\cC_1^{\omega}$ of compact objects of $\cC_1$, as well as the tensor unit of $\cC_1$, then the pair $(\Ind(\cD),\cC_1)$ is a local duality context. In particular, there exists a recollement
\[
\xymatrix{\cC_0 \ar[r] & \Ind(\cD) \ar[r] \ar@<1ex>[l] \ar@<-1ex>[l] & \cC_1 \ar@<1ex>[l] \ar@<-1ex>[l]}
\]
of stable categories. 
\end{prop}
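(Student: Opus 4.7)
The plan is to verify that $(\Ind(\cD),\cC_1)$ satisfies the two axioms of a local duality context, and then invoke \Cref{prop:abstractrecollement} to extract the recollement.

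First I would check that $\Ind(\cD)$ is a stable category. Since $\cD$ is small, stable, and idempotent complete, standard properties of Ind-completion give that $\Ind(\cD)$ is a presentable stable $\infty$-category with $\Ind(\cD)^{\omega}=\cD$. The hypotheses that $\cD$ is closed under $\otimes$ and contains the unit $A$ make the inclusion $\cD\hookrightarrow\cC_1$ symmetric monoidal, and the universal property of Ind-completion yields a unique colimit-preserving symmetric monoidal structure on $\Ind(\cD)$; in particular its tensor product is separately cocontinuous. To produce dualizable compact generators, I would begin with a set $\cG\subseteq\cC_1^{\omega}$ of dualizable compact generators of $\cC_1$, noting that $\cG\subseteq\cD$ by hypothesis and that the symmetric monoidal embedding $\cD\hookrightarrow\Ind(\cD)$ transports the dualizability data; duals for the remaining compact objects of $\Ind(\cD)$ (those in $\cD\setminus\cC_1^{\omega}$) can be built from the internal $\iHom$ of $\Ind(\cD)$, using closedness of $\cD$ under $\otimes$ to ensure the duals again lie in $\Ind(\cD)$.

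Next I would realise $\cC_1$ as a localizing subcategory of $\Ind(\cD)$ generated by compact objects. The fully faithful inclusion $\cC_1^{\omega}\hookrightarrow\cD$ Ind-extends to a fully faithful embedding $\cC_1=\Ind(\cC_1^{\omega})\hookrightarrow\Ind(\cD)$, and the generating objects $\cC_1^{\omega}$ are compact in $\Ind(\cD)$ since they lie in $\cD=\Ind(\cD)^{\omega}$. Together with the previous paragraph this verifies that $(\Ind(\cD),\cC_1)$ is a local duality context, and applying \Cref{prop:abstractrecollement} with $\cT=\cC_1$ yields the desired recollement, with $\cC_0:=\cC_1^{\perp}\subseteq\Ind(\cD)$.

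The main obstacle in this plan is the dualizability verification for the compact objects of $\Ind(\cD)$ that lie outside $\cC_1^{\omega}$: the dualizable generators of $\cC_1$ alone only generate $\cC_1$ inside $\Ind(\cD)$, not all of $\Ind(\cD)$, so one genuinely has to leverage closedness of $\cD$ under the monoidal product of $\cC_1$ in order to produce duals in $\Ind(\cD)$ for every compact generator. All remaining steps---extension of the monoidal structure, identification of $\cC_1$ as a compactly generated localizing subcategory, and the application of \Cref{prop:abstractrecollement}---are formal.
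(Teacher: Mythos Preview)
Your strategy coincides with the paper's: verify that $\Ind(\cD)$ is a stable category, exhibit $\cC_1$ inside it as a localizing subcategory generated by objects that are compact in $\Ind(\cD)$, and then invoke \Cref{prop:abstractrecollement}. The paper does exactly this, citing \cite[Thm.~1.13]{bhv} for the first step and \cite[Lem.~1.17]{bhv} for the fact that the orthogonal $\cC_0$ is again a stable category; the embedding $\cC_1 \hookrightarrow \Ind(\cD)$ is obtained, as you do, by Ind-extending the fully faithful inclusion $\cC_1^{\omega} \hookrightarrow \cD$.

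You are right to flag the dualizability of compact generators of $\Ind(\cD)$ as the non-formal point, and right that the dualizable generators $\cG \subseteq \cC_1^{\omega}$ of $\cC_1$ only generate the image of $\cC_1$ inside $\Ind(\cD)$, not all of it. However, your proposed fix is not quite right as written: forming $\iHom_{\Ind(\cD)}(X,A)$ for $X \in \cD$ always lands in $\Ind(\cD)$ by construction, so there is nothing to ``ensure'', and the mere existence of this weak dual does not make $X$ strongly dualizable. Closedness of $\cD$ under $\otimes$ tells you that $X \otimes (-)$ preserves compact objects, hence that $\iHom_{\Ind(\cD)}(X,-)$ preserves filtered colimits, but that is still weaker than dualizability. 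The paper sidesteps this entirely by appeal to the external reference; if you want a self-contained argument you need a different input here---for instance the (often satisfied) additional hypothesis that every object of $\cD$ is already dualizable in $\cC_1$, in which case the symmetric monoidal embedding transports the duality data and $\cD$ itself serves as a set of dualizable compact generators.
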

\begin{proof}
First note that $\Ind(\cD)$ is a stable category, see \cite[Thm.~1.13]{bhv}, and it follows from \cite[Lem.~1.17]{bhv} that $\cC_0$ is as well. By assumption, there exists a commutative diagram
\[
\xymatrix{\cC_1^{\omega} \ar[r]^-j \ar[d]_f & \cC_1 \ar[d]^{\Ind(f)} \\
\cD \ar[r]_-j & \Ind(\cD)}
\]
in which all functors are fully faithful and $\Ind(f)$ preserves all colimits. This square exhibits $\cC_1$ as a localizing subcategory of $\Ind(\cD)$ generated by the image of $\cC_1^{\omega}$ under the composite $jf$. Since $\cC_1^{\omega} \subseteq \cD = \Ind(\cD)^{\omega}$, \Cref{thm:abstractlocalduality} and \Cref{prop:abstractrecollement} apply to yield the claim. 
\end{proof}

\begin{exmp}
Suppose $\cA$ is a symmetric monoidal locally Noetherian Grothendieck abelian category (see \cite{popescu}) and write $\mathrm{noeth}(\cA) \subseteq \cA$ for the full subcategory of Noetherian objects in $\cA$. Assume that the derived category $\cD(\cA)$ is compactly generated, which is the case for example if $\cA$ is generated by a set of objects that remain  compact in $\cD(\cA)$. Since $\cD(\cA)^{\omega} \subseteq \cD^b(\mathrm{noeth}(\cA))$, \Cref{prop:recollements} applies to the pair $(\cD, \cC_1) = (\cD^b(\mathrm{noeth}(\cA)),\cD(\cA))$, giving the recollement studied by Krause in \cite{krausestablederivedcat}.
\end{exmp}

Following Greenlees~\cite{greenlees_axiomatic}, we can construct an abstract version of Tate cohomology in the generality of local duality contexts. 

\begin{defn}
The Tate construction associated to the local duality context $(\cC,\cT)$ is defined as $t_{\cT} = L\Lambda$, viewed as an endofunctor of $\cC$.  
\end{defn}

The next result immediately follows from \Cref{thm:abstractlocalduality}(5); it was originally proven by Greenlees~\cite[Cor.~2.5]{greenlees_axiomatic}, see also~\cite[Rem.~2.29]{bhv}.

\begin{cor}
Suppose $(\cC,\cT)$ is a local duality context. The restriction of the Tate construction $t_{\cT}$ to $\cT$ via $\iota_{\iota_{\cT}}$ is naturally equivalent to $\Sigma\Delta$. 
\end{cor}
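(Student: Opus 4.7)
The plan is to identify $t_\cT \iota_\cT = L\Lambda \iota_\cT$ with $\Sigma\Delta \iota_\cT$ by combining the fracture square with the vanishing of $L$ on $\cT$. Throughout, I read the expression $\iota_{\iota_\cT}$ in the statement as a typo for $\iota_\cT$, so that the claim is the natural equivalence $L\Lambda \iota_\cT \simeq \Sigma \Delta \iota_\cT$ of functors $\cT \to \cC$.

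First, I would record that $L\iota_\cT \simeq 0$. Since $\iota_\cT$ is fully faithful with right adjoint $\Gamma$ by \Cref{thm:abstractlocalduality}(1), the unit $\Id_\cT \to \Gamma\iota_\cT$ is an equivalence, and applying the cofiber sequence $\Gamma \to \Id \to L$ of \Cref{thm:abstractlocalduality}(1) after precomposition with $\iota_\cT$ forces the last term to vanish. Equivalently, this just reflects that $\cT$ is the kernel of the localization functor $L$ in the first localization sequence of \Cref{eq:twolocalseqrecollement}.

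Next, I would extract a natural fiber sequence $\Delta \to L \to L\Lambda$ of endofunctors of $\cC$ from the fracture square
\[
\xymatrix{\Id \ar[r] \ar[d] & \Lambda \ar[d] \\
L \ar[r] & L\Lambda}
\]
of \Cref{thm:abstractlocalduality}(6): in the stable $\infty$-category $\cC$, the fibers of parallel arrows in a pullback square agree, so the fiber of the bottom arrow $L \to L\Lambda$ coincides with the fiber of the top arrow $\Id \to \Lambda$, which is $\Delta$ by \Cref{thm:abstractlocalduality}(1). Precomposing this cofiber sequence with $\iota_\cT$ and invoking $L\iota_\cT \simeq 0$ collapses the resulting triangle to the desired natural equivalence $L\Lambda\iota_\cT \simeq \Sigma\Delta\iota_\cT$. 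No real obstacle arises here: once the fracture square is in hand and one observes that $L$ kills $\cT$, the corollary is a formal two-step consequence, mirroring Greenlees' original derivation of the identification of Tate cohomology with a suspension of $\Delta$ in the compact Lie group setting.
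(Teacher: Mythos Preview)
Your proof is correct and follows essentially the same route as the paper. The paper simply cites \Cref{thm:abstractlocalduality}(5), namely $L\Lambda \simeq \Sigma\Delta\Gamma$ (the printed ``$\Lambda L$'' there is a typo, as $\Lambda L \simeq 0$), and then restricts along $\iota_{\cT}$ using $\Gamma\iota_{\cT}\simeq \iota_{\cT}$; your argument via the fracture square and $L\iota_{\cT}\simeq 0$ is precisely how that equivalence is established in the cited reference \cite[Cor.~2.26]{bhv}, so you have unpacked the citation rather than taken a different path.
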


\begin{rem}
In fact, Lurie~\cite[A.8.17]{ha} or \cite[Cor.~2.28]{bhv} shows that the data of a recollement \eqref{eq:recollementlabels} is equivalent to giving an exact functor $\cC_1 \to \cC_0$. Unwinding his proof, one can see that the functor corresponding to a recollement arising from a local duality context $(\cC,\cT)$ is precisely the Tate construction $t_{\cT} = L\iota_{\cT^{\perp\perp}}\colon \cT^{\perp\perp} \to \cT^{\perp}$. This together with \Cref{thm:abstractlocalduality}(6) makes precise the sense in which the Tate construction controls the gluing in a recollement. 
\end{rem}

\subsection{Cellularity in stable categories}\label{ssec:cell}

Throughout this section, assume that $\cC = (\cC,\otimes,A)$ is a stable category. By definition, $\cC$ is compactly generated, but it is not necessarily monogenic. Even in the case where $\cC$ is compactly generated by a finite set of objects $\cS \subset \cC^{\omega}$, so that we can form the single compact generator $\bigoplus_{S \in \cS}S$, $\cC$ does not have to be generated by its tensor unit $A$. Consequently, there is no construction of homotopy groups for $\cC$ with all the good properties familiar from the category $\Sp$ of spectra. 

In order to capture the part of $\cC$ that admits a good notion of homotopy groups, we will now introduce and study cellular objects in stable categories. Special cases of these ideas have previously appeared in various contexts, for example in motivic homotopy theory~\cite{duggerisaksencell,totaro_cell} or work in progress by Casacuberta and White, in modular representation theory \cite{krausebenson_kg}, and in equivariant $KK$-theory \cite{kk_theory}. 

\begin{defn}
Suppose $\cC= (\cC,\otimes,A)$ is a stable category. For any $X \in \cC$, we define its homotopy groups as $\pi_*^{\cC}X = \pi_*^{\Sp}\Hom_{\cC}(A,X)$, where the right-hand side are the homotopy groups of the mapping spectrum from $A$ to $X$. We will usually omit the superscript $\cC$ on the homotopy groups if no confusion is likely to occur. 
\end{defn}

\begin{lem}\label{lem:cellulartrivial}
An object $X \in \Loc(A)$ is equivalent to $0$ if and only if $\pi_*^{\cC}X=0$.
\end{lem}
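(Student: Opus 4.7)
The forward direction is immediate: if $X\simeq 0$, then the mapping spectrum $\Hom_{\cC}(A,X)$ is contractible, hence $\pi_*^{\cC}X=0$. The substance is in the converse, so assume $X\in\Loc(A)$ and $\pi_*^{\cC}X=0$.

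The plan is to introduce the "annihilator" subcategory
\[
\cN = \{\,Y\in\cC \ :\ \Hom_{\cC}(Y,X)\simeq 0\,\}
\]
and verify that $\cN$ is a localizing subcategory of $\cC$. Concretely, since $\Hom_{\cC}(-,X)$ is a contravariant exact functor that sends colimits to limits of spectra, the collection $\cN$ is closed under desuspensions, cofibers, retracts, and arbitrary small colimits; this is a routine check using that the zero spectrum is preserved under all the relevant limit constructions.

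The hypothesis $\pi_*^{\cC}X=0$ means exactly that the mapping spectrum $\Hom_{\cC}(A,X)$ has vanishing homotopy groups, so it is contractible, i.e.\ $A\in\cN$. Since $\cN$ is localizing, we conclude $\Loc(A)\subseteq \cN$. Applying this to the object $X$ itself, which lies in $\Loc(A)$ by assumption, yields $\Hom_{\cC}(X,X)\simeq 0$; in particular $\id_X\simeq 0$, whence $X\simeq 0$.

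There is no real obstacle here — the only mild point to be careful about is the step from vanishing of $\pi_*^{\Sp}\Hom_{\cC}(A,X)$ to contractibility of the mapping spectrum itself (so that all shifts $\Sigma^n A$ are simultaneously captured), but this is automatic since a spectrum with vanishing homotopy groups is equivalent to $0$. The argument is a standard "generators detect zero" maneuver, and the statement should really be viewed as a sanity check justifying the subsequent focus on $\Loc(A)$ as the natural home of cellular objects.
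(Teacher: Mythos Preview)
Your proof is correct and essentially unpacks the same idea the paper uses. The paper's argument is a single sentence invoking that $A$ is a generator of $\Loc(A)$, so $\pi_*^{\cC}X = \pi_*\Hom_{\cC}(A,X)=0$ forces $X\simeq 0$; your annihilator-subcategory argument is precisely the standard justification for this ``generators detect zero'' principle, so the two proofs differ only in level of detail.
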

\begin{proof}
Since $A$ is a generator of $\cC^{\cell} = \Loc(A)$, $\pi_*^{\cC}X = \pi_*\Hom_{\cC}(A,X)$ being trivial is equivalent to $X$ being equivalent to $0$. 
\end{proof}

\begin{defn}
The category $\cC^{\cell}$ of cellular objects in $\cC$ is defined as the localizing subcategory of $\cC$ generated by $A$. The right adjoint of the inclusion functor is denoted by
\[
\xymatrix{\Cell\colon \cC \ar[r] &\cC^{\cell} = \Loc_{\cC}(A)}
\]
and will be referred to as cellularization. 
\end{defn}

It follows that there is a colocalization sequence
\[
\xymatrix{\cC^{\cell,\perp} = \cC^{\ac} \ar@<0.5ex>[r] & \cC \ar@<0.5ex>[r] \ar@<0.5ex>[l] & \cC^{\cell} \ar@<0.5ex>[l]}
\]
and we refer to $\cC^{\ac}$ is the category of acyclics in $\cC$. If we assume further that the unit of $\cC$ is compact, we obtain a local duality context in which we can give an explicit description of the associated local homology functor. 

Let $\cC = (\cC,\otimes,A)$ be a stable category with $A \in \cC^{\omega}$ and consider the local duality context $(\cC,\Loc(A))$. The resulting local cohomology and local homology functors are displayed in the following diagram
\begin{equation}\label{eq:cellulardiagram}
\begin{gathered}
\xymatrix{& \cC^{\ac} \ar[d] \ar@{-->}@/^1.4pc/[ddr] \\
& \cC \ar@<-1ex>[u]_{C_{\pi}}\ar@<1ex>[u] \ar@<0.5ex>[ld]^{\Cell} \ar@<0.5ex>[rd]^{L_{\pi}} \\
\cC^{\cell} \ar@<0.5ex>[ru] \ar[rr]_{\sim} \ar@{-->}@/^1.4pc/[ruu] & & \cC^{\pi-\local}. \ar@<0.5ex>[lu]}
\end{gathered}
\end{equation}

The colocalizing subcategory $\cC^{\pi-\local}$ of $\cC$ consists by definition of the $\pi$-local objects in $\cC$. The next result justifies this choice of terminology. 

\begin{lem}\label{lem:bousfield}
The functor $L_{\pi}\colon \cC \to \cC^{\pi-\local}$ is left Bousfield localization with respect to the $\pi_*^{\cC}$-equivalences. In other words, it is the localization functor that inverts the class of those morphisms $f$ in $\cC$ such that $\pi_*^{\cC}f$ is an isomorphism. 
\end{lem}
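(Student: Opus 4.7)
The plan is to identify the class of morphisms inverted by $L_\pi$ with the $\pi_*^{\cC}$-equivalences, using standard properties of reflective localizations of stable $\infty$-categories. By construction, $L_\pi$ is the functor denoted $\Lambda$ in \Cref{thm:abstractlocalduality} applied to the local duality context $(\cC,\Loc(A))$, so it is left adjoint to the fully faithful inclusion $\iota_{\cC^{\pi-\local}}\colon \cC^{\pi-\local}\hookrightarrow \cC$. Any such reflective localization is automatically the Bousfield localization with respect to the class $W_{L_\pi}$ of morphisms $f$ such that $L_\pi f$ is an equivalence, so the task reduces to showing that $W_{L_\pi}$ coincides with the class of $\pi_*^{\cC}$-equivalences.

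First I would observe that $L_\pi$ is an exact functor of stable $\infty$-categories, being a left adjoint whose target $\cC^{\pi-\local}$ is a stable subcategory of $\cC$ (cf.\ \Cref{thm:abstractlocalduality}(3)). Consequently $L_\pi$ commutes with fibers, so for a morphism $f\colon X\to Y$ in $\cC$, the condition $L_\pi f$ equivalence is equivalent to $L_\pi(\fib f)\simeq 0$, i.e.\ to $\fib f \in \ker L_\pi$. By the cofiber sequence $\Delta\to \Id\to \Lambda$ of \Cref{thm:abstractlocalduality}(1), the kernel of $L_\pi=\Lambda$ is exactly $\cT^\perp=\cC^{\ac}$.

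Next I would identify $\cC^{\ac}$ with the subcategory of objects with vanishing homotopy groups. By definition $\cC^{\ac}=\Loc_{\cC}(A)^\perp$, and since $A$ generates $\cC^{\cell}=\Loc_{\cC}(A)$, this orthogonal is equivalently the full subcategory of $X\in\cC$ with $\Hom_{\cC}(\Sigma^n A,X)\simeq 0$ for all $n\in\Z$, i.e.\ $\pi_*^{\cC}X=0$. Combined with the standard long exact sequence in $\pi_*^{\cC}$ associated to the fiber sequence $\fib f\to X\to Y$, this gives $\fib f\in \cC^{\ac}$ if and only if $\pi_*^{\cC}f$ is an isomorphism. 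Hence $W_{L_\pi}$ is exactly the class of $\pi_*^{\cC}$-equivalences, finishing the proof.

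The only real subtlety, which I would address explicitly, is the exactness of $L_\pi$—once this is in place, everything is a formal consequence of the local duality package of \Cref{thm:abstractlocalduality} together with the identification of $\pi_*^{\cC}$-acyclics via \Cref{lem:cellulartrivial}. No further Brown representability or small-object-argument input is required, because the reflective localization is already provided by the local duality context.
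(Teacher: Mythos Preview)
Your proof is correct and follows essentially the same route as the paper's: both arguments identify the kernel of $L_\pi$ with the full subcategory $\cC^{\ac}=\Loc(A)^\perp$ and then show this coincides with the objects having vanishing $\pi_*^{\cC}$. The paper phrases the last step via $\Cell(X)\simeq 0 \Leftrightarrow \pi_*^{\cC}\Cell(X)=0 \Leftrightarrow \pi_*^{\cC}X=0$ using the $(\iota,\Cell)$-adjunction and \Cref{lem:cellulartrivial}, whereas you compute $\Loc(A)^\perp$ directly from the generator $A$; these are the same observation. Your additional remarks about exactness of $L_\pi$ and the reduction from morphisms to objects via fibers are made explicit where the paper leaves them implicit, but there is no substantive difference in strategy.
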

\begin{proof}
By definition, $L_{\pi}$ is left Bousfield localization away from those objects in $X \in \cC$ such that $\Cell(X) \simeq 0$, i.e., the $\pi$-acyclic objects. By \Cref{lem:cellulartrivial} and adjunction, this condition on $X$ is equivalent to
\[
0 = \pi_*^{\cC}\Cell(X) \cong \pi_*\Hom_{\cC}(A,\Cell(X)) \cong \pi_*\Hom_{\cC}(A,X) \cong \pi_*^{\cC}X.
\]
Therefore, $\pi$-acyclicity of $X$ is equivalent to $\pi_*^{\cC}X$ being trivial, and the claim follows. 
\end{proof}

We now show that the subcategory of cellular objects in $\cC$ is the part of $\cC$ to which the intuition and methods from stable homotopy theory apply most directly. 

\begin{prop}\label{prop:moritacell}
If $\cC = (\cC,\otimes,A)$ is stable category with $A$ compact, then the functor $\Hom_{\cC}(A,-)$ induces an exact and symmetric monoidal equivalence
\[
\xymatrix{\cC^{\cell} \ar[r]^-{\sim} & \Mod_{R}}
\]
with $R = \End_{\cC}(A) \in \Alg_{\bE_{\infty}}(\Sp)$. The inverse equivalence is given by the functor $-\otimes_R A$, using the canonical $R$-module structure on $A$. 
\end{prop}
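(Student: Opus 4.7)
The plan is to apply the $\infty$-categorical Schwede--Shipley theorem (\cite[Thm.~7.1.2.1]{ha}) in the present setting. Since $A$ is compact by hypothesis, $F := \Hom_{\cC}(A,-)\colon \cC \to \Sp$ preserves all colimits, and by \Cref{lem:cellulartrivial} its restriction to $\cC^{\cell}$ is conservative. The $\bE_{\infty}$-ring structure on $R = \End_{\cC}(A)$ together with the canonical action refines $F$ to a colimit-preserving functor $F\colon \cC \to \Mod_R$. Dually, the canonical left $R$-action on $A$ yields a functor $G = -\otimes_R A\colon \Mod_R \to \cC$, computed as a relative tensor product, which is colimit-preserving and satisfies $G(R) \simeq A$. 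Adjoint functor formalism (or directly \cite[Prop.~7.1.2.4]{ha}) supplies a natural equivalence
\[
\Hom_{\cC}(G(M),X) \simeq \Hom_{\Mod_R}(M,F(X)),
\]
so that $(G,F)$ is an adjoint pair and $G$ factors through $\cC^{\cell} = \Loc_{\cC}(A)$.

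Next, I would show that $(G, F|_{\cC^{\cell}})$ is an equivalence. The unit $\eta_R\colon R \to F(G(R)) = \Hom_{\cC}(A,A) = R$ is tautologically an equivalence, and since both $F|_{\cC^{\cell}}$ and $G$ preserve colimits, the full subcategory of $\Mod_R$ on which $\eta$ is an equivalence is localizing and contains $R$; thus it equals all of $\Mod_R$. Dually, the counit $\epsilon_A\colon G(F(A)) \to A$ is an equivalence, and by the same colimit argument together with the conservativity of $F|_{\cC^{\cell}}$, the counit is an equivalence on all of $\cC^{\cell}$. This establishes the underlying equivalence of stable $\infty$-categories. Exactness is automatic since both functors are colimit-preserving and their domains and codomains are stable.

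The main obstacle is the symmetric monoidal enhancement. To address it, I would first verify that $\cC^{\cell}$ inherits a symmetric monoidal structure from $\cC$: indeed, for $X,Y \in \cC^{\cell}$, using that $\otimes$ preserves colimits in each variable and $A \otimes Y \simeq Y$, one has $X \otimes Y \in \Loc_{\cC}(A \otimes Y) = \Loc_{\cC}(Y) \subseteq \cC^{\cell}$, so $\cC^{\cell}$ is closed under the tensor product of $\cC$ and inherits a symmetric monoidal structure with unit $A$. The functor $F|_{\cC^{\cell}}$ is canonically lax symmetric monoidal with structure map $\mathbb{S} \to F(A) = R$ the unit of $R$. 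To promote it to a strong symmetric monoidal equivalence, I would invoke the universal property of $\Mod_R$ as the presentable stable symmetric monoidal $\infty$-category freely generated under colimits by the $\bE_{\infty}$-algebra $R$ (\cite[Thm.~4.8.5.11]{ha}): the colimit-preserving symmetric monoidal functor $\Mod_R \to \cC^{\cell}$ determined by sending $R \mapsto A$ coincides with $G$ on generators and hence everywhere, completing the identification.
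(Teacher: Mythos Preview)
Your proposal is correct and follows the same derived Morita theory approach the paper invokes: the paper's own proof is essentially a two-line citation to Schwede--Shipley \cite[Thm.~3.1.1]{schwedeshipley_modules} and Lurie \cite[Thm.~7.1.2.1]{ha}, together with the remark that $A$ being the unit of a symmetric monoidal category forces $R=\End_{\cC}(A)$ to be $\bE_{\infty}$. Your unpacking of the unit/counit argument and the symmetric monoidal enhancement (via the universal property of $\Mod_R$) is more detailed than what the paper provides; an equally valid and slightly more direct alternative for the monoidal part is to check that the lax structure maps $F(X)\otimes_R F(Y)\to F(X\otimes Y)$ are equivalences by a localizing-subcategory argument starting from $X=Y=A$.
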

\begin{proof}
This is an instance of derived Morita theory, as proven originally by Schwede and Shipley~\cite[Thm.~3.1.1]{schwedeshipley_modules} and Lurie \cite[Thm.~7.1.2.1]{ha} in the setting of $\infty$-categories. Since $\cC$ is symmetric monoidal, the unit $A$ is a commutative algebra object in $\cC$, hence $R$ is an $\bE_{\infty}$-ring spectrum. 
\end{proof}

As an application, we deduce the existence of the K{\"u}nneth spectral sequence for cellular objects. Note that, for arbitrary $X,Y \in \cC$, there can be no convergent spectral sequence computing the homotopy groups of $X \otimes Y$ from $\pi_*X$ and $\pi_*Y$.

\begin{cor}
For any $X,Y \in \cC^{\cell}$ there exists a natural and convergent spectral sequence
\[
E_{p,q}^2 \cong \Tor_{p,q}^{\pi_*^{\cC}A}(\pi_*^{\cC}X,\pi_*^{\cC}Y) \implies \pi_{p+q}^{\cC}(X \otimes Y),
\]
where the bigraded Tor group $\Tor^{\pi_*^{\cC}A}_{*,*}$ is computed in the category of graded $\pi_*A$-modules. 
\end{cor}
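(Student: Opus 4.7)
The strategy is to reduce the statement to the classical Künneth spectral sequence for modules over an $\bE_{\infty}$-ring spectrum, invoking the derived Morita equivalence of \Cref{prop:moritacell}.

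First I would check that $\cC^{\cell} = \Loc_{\cC}(A)$ is closed under the tensor product of $\cC$: for any fixed $Y \in \cC^{\cell}$, the full subcategory of those $X \in \cC$ with $X \otimes Y \in \cC^{\cell}$ is localizing (since $\otimes$ preserves colimits in each variable) and contains the unit $A$ because $A \otimes Y \simeq Y$. Consequently, $\cC^{\cell}$ inherits a symmetric monoidal structure from $\cC$, and the Morita equivalence
\[
\Hom_{\cC}(A,-)\colon \cC^{\cell} \xrightarrow{\;\sim\;} \Mod_R, \qquad R = \End_{\cC}(A),
\]
can be promoted to an equivalence of symmetric monoidal stable $\infty$-categories, sending $\otimes$ on the left to the relative tensor product $\otimes_R$ on the right. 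Under this equivalence, the homotopy groups $\pi_*^{\cC} X = \pi_*\Hom_{\cC}(A,X)$ agree with the underlying homotopy of the corresponding $R$-module, equipped with its natural action of $\pi_* R = \pi_*^{\cC} A$.

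It therefore suffices to produce, for any pair $M, N \in \Mod_R$, a natural strongly convergent spectral sequence
\[
E_{p,q}^2 = \Tor_{p,q}^{\pi_* R}(\pi_* M, \pi_* N) \Longrightarrow \pi_{p+q}(M \otimes_R N).
\]
This is the classical Künneth spectral sequence for an $\bE_{\infty}$-ring spectrum, which I would obtain by resolving $M$ (or $N$) by a simplicial object of free $R$-modules via the two-sided bar construction and filtering $M \otimes_R N$ by the skeleta of the resulting simplicial $R$-module; strong convergence follows from the fact that this filtration is exhaustive and bounded below. A reference in the $\infty$-categorical setting is \cite[Sec.~7.2]{ha}.

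I do not anticipate a genuine obstacle: the only non-formal input is the compatibility of the monoidal structure on $\cC^{\cell}$ with that on $\Mod_R$ under Morita theory, and once one is inside $\Mod_R$ the spectral sequence is standard. Naturality in $X$ and $Y$ is automatic from functoriality of the bar construction.
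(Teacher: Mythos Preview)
Your proposal is correct and follows essentially the same route as the paper: reduce via the symmetric monoidal Morita equivalence of \Cref{prop:moritacell} to $\Mod_R$, identify homotopy groups on both sides, and then invoke the standard K\"unneth spectral sequence for modules over a structured ring spectrum (the paper cites \cite[Thm.~IV.4.1]{ekmm} and \cite[7.2.1.19]{ha}). Your additional remarks on closure of $\cC^{\cell}$ under $\otimes$ and on the bar-construction origin of the spectral sequence are fine elaborations but not needed beyond what the paper already packages into \Cref{prop:moritacell} and the cited references.
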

\begin{proof}
By \Cref{prop:moritacell} and the observation that the notions of homotopy groups in $\cC^{\cell}$ and $\Mod_R$ agree under the equivalence, it suffices to construct the Tor spectral sequence 
\[
E_{p,q}^2 \cong \Tor_{p,q}^{\pi_*R}(\pi_*M,\pi_*N) \implies \pi_{p+q}(M \otimes_R N)
\]
for all $M,N \in \Mod_R$. A proof for arbitrary $\bE_1$-ring spectra can be found in \cite[Thm.~IV.4.1]{ekmm} and \cite[7.2.1.19]{ha}. 
\end{proof}

Specializing this result to the motivic homotopy category, we recover a variant of the spectral sequence constructed by Dugger and Isaksen in \cite[Prop.~7.7]{duggerisaksencell}.

\begin{rem}
More generally, we obtain a notion of $\cS$-cellularity for any finite set of compact objects $\cS \subseteq \cC^{\omega}$. The same arguments apply to this more general situation, giving rise to a notion of $\cS$-indexed homotopy groups 
\[
\xymatrix{\pi_*^{\cS}\colon \cC \ar[r] & \Mod_{R_{\cS}},}
\]
where $R_{\cS} = \End_{\cC}(\oplus_{S\in\cS}S)$. Note, however, that the equivalence given in \Cref{prop:moritacell} might not be symmetric monoidal, since in general $R_{\cS}$ has only the structure of an $\bE_{1}$-ring spectrum. 
\end{rem}

\section{Local duality for ring spectra}\label{sec:ringspectra}

\subsection{Module categories and the local to global principle}
Given the equivalence between cellular objects and ring spectra via derived Morita theory we study local homology and cohomology for ring spectra in this section. The results mentioned in \Cref{ssec:cell} actually hold under slightly weaker hypothesis, see \cite[Prop.~7.1.2.6 and Prop.~7.1.2.7]{ha} and also \cite[Thm.~3.1.1]{schwedeshipley_modules}. Recall that if $R$ is an $\mathbb{E}_k$-ring spectrum, the $\infty$-category $\Mod_R$ of (left) $R$-modules inherits an $\mathbb{E}_{k-1}$-monoidal structure \cite[Sec.~4.8.3]{ha}.
\begin{thm}[Schwede--Shipley, Lurie]
	Suppose that $(\cC,\otimes,A)$ is a stable, presentable, $\mathbb{E}_{k-1}$-monoidal $\infty$-category for $1 \le k \le \infty$, such that the tensor product functor preserves colimits separately in each variable. Then there is an equivalence $\cC \simeq \Mod_R$ of $\mathbb{E}_{k-1}$-monoidal $\infty$-categories if and only if $A$ is a compact generator of $\cC$, where $R \simeq \End_{\cC}(A)$ is naturally an $\mathbb{E}_{k}$-ring spectrum. 
\end{thm}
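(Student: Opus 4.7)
The plan is to invoke Lurie's $\infty$-categorical Barr--Beck--Lurie monadicity theorem together with Dunn additivity for endomorphism spectra of units. The ``only if'' direction is immediate: if $\cC \simeq \Mod_R$ as $\mathbb{E}_{k-1}$-monoidal categories then the unit corresponds to the free rank-one module $R$, which is compact (its mapping functor is the forgetful functor, preserving filtered colimits) and generates $\Mod_R$ since every module is a colimit of shifted free modules.

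For the ``if'' direction, I would first upgrade $R = \End_\cC(A)$ to an $\mathbb{E}_k$-ring spectrum. Since $A$ is the unit of the $\mathbb{E}_{k-1}$-monoidal structure, its endomorphism spectrum inherits an $\mathbb{E}_k$-algebra structure via Dunn additivity: endomorphisms of the unit of an $\mathbb{E}_n$-monoidal $\infty$-category naturally form an $\mathbb{E}_{n+1}$-algebra. Using the canonical enrichment of any stable presentable $\infty$-category over $\Sp$, I set up the adjunction
\[
- \otimes A \colon \Sp \rightleftarrows \cC \colon \Hom_\cC(A,-).
\]
Compactness of $A$ forces the right adjoint to preserve all small colimits, while generation by $A$ makes it conservative. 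The composite monad on $\Sp$ then equals $- \otimes R$, as both functors preserve colimits and agree on the sphere spectrum $\mathbb{S}$ by the definition of $R$. Modules for this monad are precisely $\Mod_R$, yielding a comparison functor $F \colon \cC \to \Mod_R$ sending $X$ to $\Hom_\cC(A,X)$ with its natural $R$-action; Barr--Beck--Lurie then identifies $F$ as an equivalence of stable $\infty$-categories with inverse $M \mapsto M \otimes_R A$. To promote this to an $\mathbb{E}_{k-1}$-monoidal equivalence, I would invoke the universal property of $\Mod_R$ as the free $\mathbb{E}_{k-1}$-monoidal presentable $\infty$-category on the $\mathbb{E}_k$-algebra $R$: the lax $\mathbb{E}_{k-1}$-monoidal structure on $F$ arising from $A$ being the monoidal unit is automatically strong, since $F$ preserves colimits and sends the unit to $R$.

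The principal obstacle is the higher-algebraic bookkeeping rather than any single hard step: carefully tracking the $\mathbb{E}_k$-structure through Dunn additivity, verifying the naturality of $F$ as a lax $\mathbb{E}_{k-1}$-monoidal functor, and extracting the monoidal refinement from the underlying monadicity equivalence. Each ingredient is standard in Lurie's \emph{Higher Algebra}, but their combination constitutes the substance of the theorem and would be the locus of any real work in a formal proof.
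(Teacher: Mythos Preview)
The paper does not supply a proof of this theorem: it is stated as a known result attributed to Schwede--Shipley and Lurie, with the preceding sentence pointing the reader to \cite[Prop.~7.1.2.6 and Prop.~7.1.2.7]{ha} and \cite[Thm.~3.1.1]{schwedeshipley_modules}. There is therefore no argument in the paper to compare yours against.

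Your sketch is a faithful outline of the standard argument, essentially that of Lurie in \emph{Higher Algebra}: establish the underlying equivalence via Barr--Beck--Lurie monadicity (compactness of $A$ gives colimit-preservation of the right adjoint, generation gives conservativity, and the induced monad on $\Sp$ is $-\otimes R$), then promote to an $\mathbb{E}_{k-1}$-monoidal equivalence. One minor imprecision worth flagging: describing $\Mod_R$ as ``the free $\mathbb{E}_{k-1}$-monoidal presentable $\infty$-category on the $\mathbb{E}_k$-algebra $R$'' is not quite the correct universal property; the monoidal upgrade is more cleanly obtained either by showing directly that the inverse $-\otimes_R A$ is an $\mathbb{E}_{k-1}$-monoidal colimit-preserving functor sending unit to unit, or by invoking Lurie's treatment of module categories as $\mathbb{E}_{k-1}$-algebras in presentable $\infty$-categories. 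You already identify this bookkeeping as the locus of the real work, and your overall approach is sound and matches the cited references.
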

In general if $A$ is compact, but not a compact generator, then $\Mod_R$ is a colocalization of $\cC$, and is equivalent (still as  $\mathbb{E}_{k-1}$-monoidal $\infty$-categories) to the subcategory of cellular objects of $\cC$, as studied more thoroughly in \Cref{ssec:cell}.  

We hence let $R$ denote an arbitrary $\mathbb{E}_{\infty}$-ring spectrum; the theory works more generally for $\mathbb{E}_k$ ring spectra for $k \ge 2$, but we do not consider this case. The $\infty$-category $\Mod_R$ is a compactly generated (the compact generator is $R$ itself) $\mathbb{E}_{\infty}$-monoidal, presentable $\infty$-category. Moreover our assumptions imply that $\pi_*R$ is a graded-commutative ring, and that if $r \in \pi_*R$ is homogeneous of degree $d$, then the map $M \xr{r} \Sigma^{-d} M$ is a map of $R$-modules. We assume additionally that $\pi_*R$ is (graded) Noetherian, i.e. satisfies the ascending chain condition for homogeneous ideals, or equivalently every homogeneous ideal of $\pi_*R$ is finitely generated.    

Given a homogeneous ideal $\frak p \in \pi_*R$, we wish to build a Koszul object $\kos{R}{\frak p}$, and we do this in stages. We base this on \cite[Sec.~5]{benson_local_cohom_2008}, however we note it is the analogue of the classical construction from commutative algebra. Suppose that $a \in \pi_dR$, and let $|a| = -d$.\footnote{The unusual choice of grading is because we will work with the coconnective ring spectrum $R = C^*(BG,k)$ in the sequel.} For any $M \in \Mod_R$ we then define by $\kos{M}{a}$ by the fiber sequence
\[
 M \xr{a} \Sigma^{d} M \to \kos{M}{a}. 
\]
For a sequence of elements $\mathbf{a} = (a_1, \ldots, a_n)$ we define the Koszul complex $\kos{M}{\mathbf{a}}$ by iterating this procedure; in particular we set $\kos{M}{\mathbf{a}} = M_n$, where $M_0 = M$ and $M_i = \kos{M_{i-1}}{a_i}$ for $i \ge 1$.

\begin{lem}\label{lem:kos_tensor_decom}
	There is an equivalence 
	\[
\kos{M}{\mathbf{a}} \simeq M \otimes (\kos{R}{a_1}) \otimes (\kos{R}{a_2}) \otimes \cdots \otimes (\kos{R}{a_n}).
	\]
\end{lem}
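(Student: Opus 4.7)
The plan is to reduce the statement first to the single-element case $n=1$, and then iterate.

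For the base case $n=1$, write $d = |a_1|$ and consider the defining fiber sequence
\[
R \xrightarrow{a_1} \Sigma^{d} R \to \kos{R}{a_1}
\]
in $\Mod_R$. Since $\Mod_R$ is stable and its symmetric monoidal product $\otimes = \otimes_R$ preserves colimits (hence cofibers, hence fibers) separately in each variable, applying $M \otimes -$ yields a fiber sequence
\[
M \otimes R \xrightarrow{\mathrm{id}_M \otimes a_1} M \otimes \Sigma^{d} R \longrightarrow M \otimes \kos{R}{a_1}.
\]
Now $M \otimes R \simeq M$ via the unit equivalence, and under this identification the map $\mathrm{id}_M \otimes a_1$ corresponds to multiplication by $a_1$ on $M$ (here one uses that $a_1$, viewed as a self-map of $R$, is by definition the one induced by the $R$-module structure, together with the unit map $R \to \End_R(R)$). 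Comparing with the defining fiber sequence of $\kos{M}{a_1}$ gives an equivalence $\kos{M}{a_1} \simeq M \otimes \kos{R}{a_1}$.

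The inductive step is immediate: assuming $M_{n-1} = \kos{M}{(a_1,\ldots,a_{n-1})} \simeq M \otimes \kos{R}{a_1} \otimes \cdots \otimes \kos{R}{a_{n-1}}$, one applies the case $n=1$ to $M_{n-1}$ and $a_n$ to obtain
\[
\kos{M}{\mathbf{a}} = \kos{M_{n-1}}{a_n} \simeq M_{n-1} \otimes \kos{R}{a_n} \simeq M \otimes \kos{R}{a_1} \otimes \cdots \otimes \kos{R}{a_n}.
\]

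The main (and really only) subtlety is the identification in the base case of $\mathrm{id}_M \otimes a_1 \colon M \otimes R \to M \otimes \Sigma^d R$ with multiplication by $a_1$ on $M$ under the unit equivalence. This is the reason the statement is stated rather than treated as an obvious tautology, but it is a standard fact about module categories over $\mathbb{E}_\infty$-ring spectra and follows from unwinding the definition of the $R$-module structure on $M$. Everything else is formal stability and the unit axiom.
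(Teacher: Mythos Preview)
Your proof is correct and takes essentially the same approach as the paper, which simply says ``This is clear from the definitions.'' You have merely unpacked what the paper leaves implicit: that tensoring the defining fiber sequence for $\kos{R}{a_i}$ with $M$ (using exactness of $\otimes$ in each variable and the unit equivalence $M\otimes R\simeq M$) recovers the defining fiber sequence for $\kos{M}{a_i}$, and then one iterates.
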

\begin{proof}
	This is clear from the definitions. 
\end{proof}
If $\frak{p}$ is a finitely-generated ideal in $\pi_*R$, we let $\kos{M}{\frak{p}}$ denote the Koszul object based on some generating set of $\frak{p}$; note that the Koszul object itself is dependent on the choice of generators, but the localizing subcategory it generates depends only on the radical of the ideal, see \cite[Prop.~2.11]{benson_stratifying_2011} (in the language of \cite{bhv} $\kos{R}{\frak p}$ is equivalent to a shift of the object denoted $\Kos_1(\frak p)$). 

Given a set of homogeneous elements $T$ in $\pi_*R$, one can form the homogeneous localization $T^{-1}\pi_*R$. This procedure can be lifted to the spectrum level in the following sense. 
\begin{thm}[Finite algebraic localization]
For any set $T$ of homogeneous elements in $\pi_*R$ there is a finite localization $L_T$, and a natural equivalence $\pi_*(L_TM) \cong T^{-1}\pi_*M$ for any $M \in \Mod_R$. 
\end{thm}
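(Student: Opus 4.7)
The plan is to build $L_T$ in two stages: first invert a single homogeneous element, then bootstrap to an arbitrary set of elements via filtered colimits. For a single $a \in \pi_d R$, form the telescope
\[
R[a^{-1}] \;=\; \colim\bigl(R \xrightarrow{a} \Sigma^{-d}R \xrightarrow{a} \Sigma^{-2d}R \to \cdots\bigr)
\]
in $\Mod_R$, and for $M \in \Mod_R$ set $L_a M = M \otimes_R R[a^{-1}]$. Since filtered colimits are exact and commute with $\pi_*$, one obtains $\pi_*(L_a M) \cong a^{-1}\pi_*M$ essentially by inspection. The unit map $M \to L_a M$ fits into a cofiber sequence whose fiber has only $a$-power torsion homotopy, and I will identify this fiber with the colocalization onto $\Loc^{\otimes}(\kos{R}{a})$; this is the localizing ideal generated by a compact object, so $L_a$ is a finite (hence smashing) localization in the sense of Miller.

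For a set $T$ of homogeneous elements, I would define
\[
R[T^{-1}] \;=\; \colim_{S}\, R[a_1^{-1}, \ldots, a_n^{-1}],
\]
where $S = \{a_1,\dots,a_n\}$ ranges over finite subsets of $T$ (ordered by inclusion), and each finite stage is built by iterating the single-element construction; by \Cref{lem:kos_tensor_decom} this finite stage is independent, up to canonical equivalence, of the chosen order of the $a_i$, since it may be written as $R \otimes_R R[a_1^{-1}] \otimes_R \cdots \otimes_R R[a_n^{-1}]$. Set $L_T M := M \otimes_R R[T^{-1}]$. Because $\pi_*$ commutes with filtered colimits and localization of graded rings is itself a filtered colimit over finite subsets, the resulting natural map $\pi_*M \to \pi_*(L_T M)$ is an iso onto $T^{-1}\pi_*M$.

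It remains to check that $L_T$ is genuinely a finite localization. The kernel of $L_T$ is the smallest localizing ideal containing every $\kos{R}{a}$ with $a \in T$; each of these objects is compact in $\Mod_R$ (being a finite iterated cofiber of maps between shifts of $R$), and since $\pi_*R$ is Noetherian, the class $\{\kos{R}{a} : a \in T\}$ is essentially small. Hence the acyclics are generated by a set of compact objects, which is precisely Miller's criterion for the Bousfield localization to exist and to be smashing; naturality of $M \to L_T M$ in $M$ follows from its formula as a smash with $R[T^{-1}]$.

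The main obstacle is the identification of the acyclic subcategory with $\Loc^{\otimes}(\{\kos{R}{a}\}_{a\in T})$; once this is in place the rest is a matter of commuting filtered colimits past $\pi_*$. Concretely, I would argue that an $R$-module $N$ is $L_T$-acyclic iff $a$ acts invertibly as zero on $\pi_*N$ for every $a \in T$, iff $\pi_*N$ is $T$-power torsion, iff $N \in \Loc^{\otimes}(\{\kos{R}{a}\}_{a \in T})$; the last equivalence uses the Noetherian hypothesis on $\pi_*R$ together with the standard fact (cf.\ \cite[Prop.~2.11]{benson_stratifying_2011}) that the Koszul object generates the same localizing ideal as the residue module with support at $a$.
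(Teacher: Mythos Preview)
Your approach differs from the paper's in order of operations: the paper \emph{defines} $L_T$ as the finite localization away from the localizing subcategory $\cT = \Loc(\{\kos{R}{t} : t \in T\})$ (which exists by abstract local duality since each $\kos{R}{t}$ is compact), and then cites \cite[Thm.~3.3.7]{hps_axiomatic} or \cite[Ch.~V.1]{ekmm} for the homotopy computation $\pi_*(L_T M) \cong T^{-1}\pi_*M$. You instead construct the telescope $R[T^{-1}]$ first, read off the homotopy, and only afterwards try to identify the acyclics with $\cT$. Both routes are legitimate, and yours has the virtue of making the homotopy computation transparent rather than a black-box citation.

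However, your final paragraph has a genuine gap. In the chain of equivalences you propose---$N$ is $L_T$-acyclic iff $\pi_*N$ is $T$-power torsion iff $N \in \Loc^{\otimes}(\{\kos{R}{a}\}_{a\in T})$---the first ``iff'' follows from your homotopy computation, but the second one, in the direction you need, is not justified by what you wrote. The Noetherian hypothesis plays no role at this point (the generating set is indexed by $T$ regardless), and \cite[Prop.~2.11]{benson_stratifying_2011} concerns the dependence of $\Loc(\kos{R}{\frak a})$ on the radical of $\frak a$, not a criterion for membership in that subcategory. The phrase ``$a$ acts invertibly as zero'' is also garbled; presumably you mean locally nilpotently.

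The clean fix is to bypass the homotopy-level characterization and argue directly on the fiber. For a single $a$ of degree $d$, the fiber of $M \to M[a^{-1}]$ is $\colim_s \Sigma^{-sd-1}(\kos{M}{a^s})$, and since each $\kos{R}{a^s}$ lies in $\Thick(\kos{R}{a})$ this colimit lies in $\Loc^{\otimes}(\kos{R}{a})$. Iterating over finite subsets of $T$ and passing to the filtered colimit, the fiber of $M \to L_T M$ always lies in $\Loc^{\otimes}(\{\kos{R}{a} : a \in T\})$, which is precisely the statement that $L_T$ is the finite localization with this category of acyclics. This is essentially what the paper outsources to \cite[Thm.~3.3.7]{hps_axiomatic}; once you fill it in, your telescope-first argument and the paper's acyclics-first argument converge.
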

\begin{proof}
This is proved in \cite[Ch.~V.1]{ekmm}. For a proof in the spirit of our techniques one can use the approach of \cite[Thm.~3.3.7]{hps_axiomatic}, which also makes it clear that $L_T$ is a finite localization. Let $\cT$ denote the localizing subcategory generated by $\{ \kos{R}{t} \mid t \in T\}$.  The functor $L_T$ arises from the local duality context $(\Mod_R,\cT)$ as the localization functor whose category of acyclics is $\cT$. We refer the reader to \cite[Thm.~3.3.7]{hps_axiomatic} for the proof of the claim that $\pi_*(L_TX) \cong T^{-1}\pi_*X$. 
\end{proof}
Taking $T = \pi_*R \setminus \frak p$, for any $\frak p \in \Spec \pi_*R$, we get \cite[Prop.~6.0.7]{hps_axiomatic}.
\begin{prop}\label{prop:hpsplocal}
For each $\frak p$ in $\Spec \pi_*R$ there is a smashing localization functor $L_{\frak p}$ with the property that $\pi_*(L_{\frak p}M) \cong (\pi_*M)_{\frak p}$ for any $M \in \Mod_R$. 
\end{prop}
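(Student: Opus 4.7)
The plan is to deduce the proposition as an immediate specialization of the Finite Algebraic Localization Theorem stated just above, taking the multiplicative set $T = \pi_*R \setminus \fp$ of homogeneous elements not in $\fp$. Since $\fp$ is a homogeneous prime ideal, $T$ is a homogeneously multiplicatively closed subset of $\pi_*R$, so the algebraic construction of $T^{-1}\pi_*R$ coincides with the graded localization at $\fp$, and similarly $T^{-1}\pi_*M \cong (\pi_*M)_{\fp}$ for every $M \in \Mod_R$.

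First I would invoke the Finite Algebraic Localization Theorem with this choice of $T$ to produce a finite localization functor $L_T$ on $\Mod_R$ together with a natural isomorphism $\pi_*(L_T M) \cong T^{-1}\pi_*M$. I then define $L_{\fp} := L_T$. Combining the natural isomorphism with the identification $T^{-1}\pi_*M \cong (\pi_*M)_{\fp}$ delivers the desired formula
\[
\pi_*(L_{\fp}M) \cong (\pi_*M)_{\fp}.
\]

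It remains to verify that $L_{\fp}$ is smashing. By the proof sketch of the preceding theorem, $L_T$ is the localization functor arising from the local duality context $(\Mod_R, \cT)$ where $\cT = \Loc(\{\kos{R}{t} \mid t \in T\})$ is a localizing ideal generated by the compact Koszul objects $\kos{R}{t}$; here we use that $\Mod_R$ is monogenic with compact unit, so each $\kos{R}{t}$ is compact in $\Mod_R$ and the localizing subcategory generated is automatically an ideal. Consequently \Cref{lem:smashing} applies, yielding that $L_{\fp}$ is smashing, i.e. $L_{\fp}(M) \simeq M \otimes_R L_{\fp}(R)$ for all $M \in \Mod_R$.

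The proof is essentially bookkeeping, with no real obstacle: the only subtlety is confirming that $\pi_*R \setminus \fp$ behaves correctly as a \emph{homogeneous} multiplicative set and that the resulting graded localization matches the standard notation $(\pi_*M)_{\fp}$ used in the statement, which follows directly from our convention (stated in the introduction) that prime ideals and all ring-theoretic operations are implicitly homogeneous. Thus the proposition follows from the Finite Algebraic Localization Theorem together with \Cref{lem:smashing}.
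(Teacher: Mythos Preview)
Your proof is correct and follows exactly the approach the paper takes: the proposition is deduced immediately from the Finite Algebraic Localization Theorem by setting $T = \pi_*R \setminus \fp$, and the smashing property follows because the relevant localizing subcategory is generated by compact objects in the monogenic category $\Mod_R$, hence is an ideal. The paper's proof is in fact just the one-line remark preceding the proposition, so your argument is a faithful (slightly more detailed) unpacking of it.
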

We will often write $M_{\frak p}$ for $L_{\frak p}M$, and write $\Mod_{R_{\frak p}}$ for the essential image of $L_{\frak p}$. We also denote by $\Delta^{\frak p}$ the colocalization functor which is right adjoint to $L_{\frak p}$ when they are thought of as endofunctors of $\Mod_R$. 
\begin{rem}
The use of $L_{\frak p}$ conflicts with the notation in \cite[Thm.~3.6]{bhv}, however we do not use that notation in this paper so no confusion should arise. 
\end{rem}
Given a homogenous ideal $\frak p \in \Spec \pi_*R$, let $(\kos{R}{\frak p})_{\frak p} \simeq \kos{R_{\frak p}}{\frak p}$ denote the localization of the Koszul object. We then have the following crucial result of Hovey, Palmieri, and Strickland \cite[Thm.~6.1.9]{hps_axiomatic} which we will use repeatedly. 
\begin{thm}[Local-to-global principle]\label{thm:hps_bousfield}
There is an equality of Bousfield classes 
\[
\langle R \rangle = \coprod_{\frak p \in \Spec \pi_*R} \langle \kos{R_{\frak p}}{\frak p} \rangle.
\]
In particular, for $M \in \Mod_R$ we have $M \simeq 0$ if and only if $M \otimes \kos{R_{\frak p}}{\frak p} \simeq 0$ for all $\frak p \in \Spec \pi_*R$. 
\end{thm}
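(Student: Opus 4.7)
This is the local-to-global principle of Hovey--Palmieri--Strickland \cite[Thm.~6.1.9]{hps_axiomatic}; the plan is to reconstruct their argument in the present notation. The inclusion $\langle R \rangle \supseteq \coprod_{\frak p} \langle \kos{R_{\frak p}}{\frak p} \rangle$ is immediate, so the content is the equivalent concrete detection assertion: if $M \otimes \kos{R_{\frak p}}{\frak p} \simeq 0$ for every $\frak p$, then $M \simeq 0$. Using the smashing property of $L_{\frak p}$ from \Cref{prop:hpsplocal}, the hypothesis rewrites as $L_{\frak p}M \otimes \kos{R}{\frak p} \simeq 0$ for every $\frak p \in \Spec \pi_*R$, and since an $R$-module vanishes iff its homotopy does, it suffices to show $\pi_*M = 0$.

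I argue contrapositively: assume $\pi_*M \neq 0$ and produce a prime $\frak p$ with $\pi_*(L_{\frak p}M \otimes \kos{R}{\frak p}) \neq 0$. By graded Noetherianness of $\pi_*R$, the poset of homogeneous ideals $\{\Ann(x) : 0 \neq x \in \pi_*M \text{ homogeneous}\}$ has a maximal element $\frak p$, which is prime by the standard graded associated-prime argument. Choosing a witness $x$ with $\Ann(x) = \frak p$ gives a graded embedding $\pi_*R/\frak p \hookrightarrow \pi_*M$, which localizes to $k(\frak p) \hookrightarrow (\pi_*M)_{\frak p} = \pi_*L_{\frak p}M$, where $k(\frak p)$ denotes the graded residue field.

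To derive the contradiction I analyze $\pi_*(L_{\frak p}M \otimes \kos{R}{\frak p})$ via the iterated tensor decomposition of \Cref{lem:kos_tensor_decom}. Assembling the long exact sequences coming from the defining fiber sequences of each $\kos{R}{a_i}$ into a finite Koszul-type spectral sequence converging to $\pi_*(L_{\frak p}M \otimes \kos{R}{\frak p})$, one finds that its top corner is identified with the classical top Koszul homology, namely the submodule of $(\pi_*M)_{\frak p}$ consisting of elements simultaneously annihilated by every $a_i \in \frak p$. The embedded copy of $k(\frak p)$ sits inside this submodule by construction; as an edge term in the spectral sequence, it survives to $E^{\infty}$ and thus contributes a nonzero class in $\pi_*(L_{\frak p}M \otimes \kos{R}{\frak p})$, contradicting the hypothesis. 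The main obstacle is exactly this spectral sequence/edge-term analysis: a naive inductive element-chase through the long exact sequences stalls because the relevant annihilator classes appear only as quotients at each stage, and lifting them across successive Koszul constructions is obstructed in general. Packaging the computation as a spectral sequence isolates the edge-term contribution cleanly and bypasses these subtleties.
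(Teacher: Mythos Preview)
The paper does not prove this result; it simply quotes \cite[Thm.~6.1.9]{hps_axiomatic}. Your outline is in the right spirit, but the spectral-sequence step contains a genuine gap.

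In the Koszul spectral sequence you describe, with $E_2^{p,*}=H_p(\mathbf{a};\pi_*L_{\frak p}M)$ converging to $\pi_*(L_{\frak p}M\otimes\kos{R}{\frak p})$, the differentials run $d_r\colon E_r^{p,*}\to E_r^{p-r,*+r-1}$. Thus the top term $H_n=\{x:a_ix=0\ \forall i\}$ sits at the edge with no \emph{incoming} differentials, not no \emph{outgoing} ones. Consequently $E_\infty^{n,*}\subseteq E_2^{n,*}$ is only the kernel of all outgoing differentials, and the edge map goes $\pi_*(L_{\frak p}M\otimes\kos{R}{\frak p})\twoheadrightarrow E_\infty^{n,*}\hookrightarrow H_n$, not the other way. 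So $k(\frak p)\subseteq H_n$ does not automatically survive, and your assertion ``as an edge term it survives to $E^\infty$'' is exactly the unjustified point. The obstruction you identified in the naive element-chase is precisely the $d_2$ (and higher) differentials out of $H_n$; repackaging into a spectral sequence does not make them vanish.

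The fix is to change the choice of prime. Rather than a \emph{maximal} annihilator, take $\frak p$ to be a \emph{minimal} prime in $\mathrm{supp}(\pi_*M)$, which exists by Noetherianness. Then $(\pi_*M)_{\frak p}$ is nonzero and $\frak p$-power-torsion, hence $a_i$-power-torsion for each generator $a_i$. Now the naive induction you dismissed does go through: if $\pi_*N\neq 0$ is $a_1$-power-torsion and $\kos{N}{a_1}\simeq 0$, then $a_1$ would act invertibly on $\pi_*N$, contradicting local nilpotence; and $\pi_*(\kos{N}{a_1})$, being an extension of subquotients of $\pi_*N$, remains $a_i$-power-torsion for $i\ge 2$, so one iterates. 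This is essentially the Hovey--Palmieri--Strickland argument. The point is that the torsion hypothesis, available for a minimal support prime but not for an arbitrary associated prime, is what prevents the inductive lift from stalling.
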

\begin{rem}
	In the context of triangulated categories with an action by a commutative ring $R$, Benson, Iyengar, and Krause introduce a local-to-global principle \cite[Sec.~3]{benson_stratifying_2011}. In the case of ring spectra, the local-to-global principle holds if for each object $M \in \Mod_R$ there is an equality
	\[
\Loc(M) = \Loc(\{ \Gamma_{\fp}M \mid \fp \in \Spec \pi_*R \}),
	\]
	where $\Gamma_{\fp}$ is a certain local cohomology functor which we will study more thoroughly in the sequel. Note that this holds automatically in $\Mod_R$, by combining \cite[Prop.~6.3.2]{hps_axiomatic} and \cite[Thm.~3.1(2)]{benson_stratifying_2011}.
\end{rem}

\subsection{Local homology and cohomology functors}\label{sec:localcohomrs}
We begin by reviewing the construction of local homology and cohomology functors for ring spectra. This is mainly a review of \cite[Sec.~3]{bhv} (which in turn is based on \cite{greenleesmay_completions}), although we will generalize it slightly. To start, let $R$ be an $\mathbb{E}_\infty$-ring spectrum with $\pi_*R$ Noetherian.

\begin{defn}\label{def:specclosed}A subset $\cal U \subseteq \Spec \pi_*R$ is called specialization closed if $\frak p \in \cal U$ and $\frak p \subseteq \frak q \in \Spec \pi_*R$, implies that $\frak q \in \cal{U}$.
\end{defn} 

Equivalently, these are unions of Zariski closed subsets of $\Spec \pi_*R$. For instance, for any homogeneous ideal $\frak p$, the set $\mathcal V(\frak p)=\{\frak q \in \Spec \pi_*R  \mid  \frak p \subseteq \frak q \}$ is specialization closed. Given any specialization closed subset $\mathcal V$, we consider the full subcategory of $\Mod_R$
\[
\Mod_R^{\mathcal V -\text{tors}}:=\Locid{}(\kos{R}{\frak p} \mid  \frak p \in \mathcal{V} ) = \Loc(\kos{R}{\frak p} \mid  \frak p \in \mathcal{V} ),
\]
where the latter equality comes from the observation that every localizing subcategory is automatically a localizing ideal, since $\Mod_R$ is compactly generated by $R$ \cite[Lem.~1.4.6]{hps_axiomatic}. This is by definition a compactly generated subcategory, and so \Cref{thm:abstractlocalduality,lem:smashing} yields:

\begin{thm}\label{thm:localduality}
 There exists a quadruple $(\Gamma_{\cV},L_{\cV},\Delta_{\cV},\Lambda_{\cV})$ of endofunctors on $\Mod_R$ satisfying the properties of \Cref{thm:abstractlocalduality}. In addition, $\Gamma_{\mathcal V}$ and $L_{\cV}$ are smashing.
\end{thm}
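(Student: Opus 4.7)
The plan is to verify that the pair $(\Mod_R,\Mod_R^{\cV\text{-tors}})$ fits the hypotheses of a local duality context in which the torsion subcategory is moreover a localizing ideal, and then invoke \Cref{thm:abstractlocalduality} together with \Cref{lem:smashing} directly. All four functors and their formal properties are then produced by the abstract machinery, so nothing beyond a hypothesis check is required.

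First I would record that $\Mod_R$ is a stable category in the sense of \Cref{sec:abstract}: it is presentable, symmetric monoidal, compactly generated by the unit $R$, which is strongly dualizable, and the tensor product preserves colimits separately in each variable. Hence $\Mod_R$ satisfies both conditions of the definition. Next I would argue that $\Mod_R^{\cV\text{-tors}}$ is a localizing subcategory generated by compact objects; the issue is showing that each Koszul object $\kos{R}{\fp}$ lies in $\Mod_R^{\omega}$. Since $\pi_*R$ is Noetherian, $\fp$ is finitely generated, so by \Cref{lem:kos_tensor_decom} the object $\kos{R}{\fp}$ is a finite iterated tensor product of cofibers of self-maps $R \xrightarrow{a_i} \Sigma^{d_i}R$; each such cofiber is compact because $R$ is, and compactness is preserved by finite tensor products in $\Mod_R$. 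Consequently $(\Mod_R,\Mod_R^{\cV\text{-tors}})$ is a local duality context.

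Now I would observe that every localizing subcategory of $\Mod_R$ is automatically a localizing ideal. This is because $\Mod_R$ is monogenic with compact unit $R$, so any localizing subcategory $\cT$ is closed under the monoidal action by \cite[Lem.~1.4.6]{hps_axiomatic} (already cited in the paragraph preceding the theorem). Thus $\Mod_R^{\cV\text{-tors}}$ is a localizing ideal.

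With these checks in hand, \Cref{thm:abstractlocalduality} applied to $(\Mod_R,\Mod_R^{\cV\text{-tors}})$ produces the quadruple of endofunctors $(\Gamma_{\cV},L_{\cV},\Delta_{\cV},\Lambda_{\cV})$ satisfying items (1)--(6), and \Cref{lem:smashing} upgrades the conclusion to smashing for $\Gamma_{\cV}$ and $L_{\cV}$, i.e. $\Gamma_{\cV}(M) \simeq M \otimes_R \Gamma_{\cV}(R)$ and analogously for $L_{\cV}$, for all $M \in \Mod_R$. There is no genuine obstacle to the argument; the only mildly subtle point is the compactness of the Koszul objects $\kos{R}{\fp}$, which is where graded Noetherianness of $\pi_*R$ is used, and the verification that every localizing subcategory of a compactly and monogenically generated stable category is automatically an ideal.
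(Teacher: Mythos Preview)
Your proposal is correct and matches the paper's own argument essentially verbatim: the paper simply observes that $\Mod_R^{\cV\text{-tors}}$ is generated by the compact Koszul objects and is automatically a localizing ideal (citing the same \cite[Lem.~1.4.6]{hps_axiomatic}), and then appeals to \Cref{thm:abstractlocalduality} and \Cref{lem:smashing}. Your write-up is slightly more explicit about why $\kos{R}{\fp}$ is compact, but the approach is identical.
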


\begin{rem}[Recollements] By \Cref{prop:abstractrecollement} there are recollements associated to the above local duality contexts. In particular we have
\[
\xymatrixcolsep{3pc}
\xymatrix{\Mod_{R}^{\cV-\text{loc}} \ar[r]|-{\iota_{\text{loc}}} & \Mod_R \ar[r]|-{\Gamma_{\cal V}} \ar@<1.2ex>[l]^-{L_{\cal V}} \ar@<-1.2ex>[l]_-{\Delta^{\cal V}} & \Mod_R^{\cV-\text{tors}}, \ar@<1.2ex>[l]^-{\iota_{\text{tors}}} \ar@<-1.2ex>[l]_-{\iota_{\text{comp}}\Lambda'}
}
\xymatrixcolsep{3pc}
\xymatrix{
  \Mod_R^{\cV-\text{loc}} \ar[r]|-{\iota_{\text{loc}}} & \Mod_R \ar[r]|-{\Lambda^{\cV}} \ar@<1.2ex>[l]^-{L_{\cV}} \ar@<-1.2ex>[l]_-{\Delta^{\cV}} & \Mod_R^{\cV-\text{comp}} \ar@<1.2ex>[l]^-{\iota_{\text{tors}}\Gamma'} \ar@<-1.2ex>[l]_-{\iota_{\text{comp}}}}
\]
where $\Lambda' = \Lambda^{\cV}\iota_{\text{tors}}$ and $\Gamma' = \Gamma_{\cV}\iota_{\text{comp}}$. The first recollement is a particular case of \cite[Thm.~6.7]{benson_local_cohom_2008}. 
	\end{rem}
\begin{exmp}
Consider the specialization closed subset $\cal{Z}(\frak p) = \{ \frak q \in \Spec \pi_*R \mid \frak q \not \subseteq \frak p \}$. It is a result of Benson, Iyengar, and Krause \cite[Thm.~4.7]{benson_local_cohom_2008} that $L_{\cal{Z}(\fp)}$ has the property that $\pi_*(L_{\cal{Z}(\fp)}M) \cong (\pi_*M)_{\fp}$ for any $M \in \Mod_R$. We will see in \Cref{cor:bikcompare} that $L_{\cal Z}(\fp)$ is equivalent to $L_{\fp}$ as defined in \Cref{prop:hpsplocal}. 
\end{exmp}

This theorem is a generalization of \cite[Thm.~3.6]{bhv} where we considered local duality contexts of the form $(\Mod_R,\Loc(\kos{R}{\frak p}))$ for ideals $\frak p \in \Spec \pi_*R$. In fact, these are a special case of the above, applied to the specialization closed subset $\cV = \cV(\frak p) = \{ \frak q \in \Spec \pi_*R \mid \frak p \subseteq \frak q \}$. 

\begin{prop}\label{prop:loccomparasion}
	For any homogeneous ideal $\frak p \in \Spec \pi_*R$ the local duality contexts
	\[(\Mod_R,\Loc(\kos{R}{\frak p})) \quad \text{ and } \quad (\Mod_R,\Loc(\kos{R}{\frak q} \mid \frak q \in \cal{V}(\frak p)))\]
	are equivalent. 	
\end{prop}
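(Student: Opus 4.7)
The goal is to prove the equality of the two localizing subcategories
\[
\Loc(\kos{R}{\frak p}) = \Loc(\kos{R}{\frak q} \mid \frak q \in \cal{V}(\frak p)),
\]
since once this is established, the two local duality contexts coincide by definition.

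The forward inclusion $\Loc(\kos{R}{\frak p}) \subseteq \Loc(\kos{R}{\frak q} \mid \frak q \in \cal{V}(\frak p))$ is immediate from the observation that $\frak p \in \cV(\frak p)$, so $\kos{R}{\frak p}$ is one of the generators of the larger subcategory. The content of the proposition therefore lies in the reverse inclusion, for which it suffices to show that $\kos{R}{\frak q} \in \Loc(\kos{R}{\frak p})$ for every $\frak q \in \cV(\frak p)$.

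To establish this, I would proceed as follows. Fix $\frak q \supseteq \frak p$. Using that $\pi_*R$ is graded Noetherian, choose a finite homogeneous generating set $a_1,\ldots,a_m$ of $\frak p$, and extend it to a finite homogeneous generating set $a_1,\ldots,a_m, b_1,\ldots,b_n$ of $\frak q$. By \Cref{lem:kos_tensor_decom}, the Koszul object associated to this generating set of $\frak q$ decomposes as
\[
\kos{R}{\frak q} \simeq \kos{R}{\frak p} \otimes \kos{R}{b_1} \otimes \cdots \otimes \kos{R}{b_n}.
\]
Now recall from the discussion preceding \Cref{thm:localduality} that every localizing subcategory of $\Mod_R$ is automatically a localizing ideal, as $\Mod_R$ is compactly generated by its tensor unit $R$. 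In particular, $\Loc(\kos{R}{\frak p})$ is closed under tensoring with arbitrary objects in $\Mod_R$, so the displayed tensor product lies in $\Loc(\kos{R}{\frak p})$. Since the localizing subcategory generated by the Koszul object depends only on the radical of the defining ideal (and not on the choice of generators), this gives $\kos{R}{\frak q} \in \Loc(\kos{R}{\frak p})$ for any generating set of $\frak q$, completing the proof.

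There is no real obstacle here: the argument is essentially an instance of the standard observation that support is monotone along specialization, translated into the language of Koszul objects. The only subtlety one must be careful about is that a priori $\kos{R}{\frak q}$ depends on a choice of generators, so one either invokes the generator-independence of the localizing subcategory it generates, or simply chooses generators compatible with the inclusion $\frak p \subseteq \frak q$ as above.
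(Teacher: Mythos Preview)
Your proof is correct, and in one direction it is actually more direct than the paper's. Both you and the paper reduce to showing the equality of the two localizing subcategories, and for the inclusion $\Loc(\kos{R}{\frak q} \mid \frak q \in \cV(\frak p)) \subseteq \Loc(\kos{R}{\frak p})$ the arguments coincide: the paper cites \cite[Lem.~6.0.9]{hps_axiomatic} for the fact that $\kos{R}{\frak q}$ lies in the thick subcategory generated by $\kos{R}{\frak p}$ whenever $\frak p \subseteq \frak q$, and your tensor-decomposition argument (together with the observation that localizing subcategories of $\Mod_R$ are ideals) is exactly how one proves that lemma.

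The difference lies in the other inclusion. You observe---correctly---that $\Loc(\kos{R}{\frak p}) \subseteq \Loc(\kos{R}{\frak q} \mid \frak q \in \cV(\frak p))$ is immediate from $\frak p \in \cV(\frak p)$, up to the generator-independence of $\Loc(\kos{R}{\frak p})$ already recorded in the paper. The paper instead proves this inclusion by showing $L_{\cV(\frak p)}(\kos{R}{\frak p}) \simeq 0$ via the local-to-global principle (\Cref{thm:hps_bousfield}), following the template of \cite[Prop.~3.1]{hov_pal_gal}. That machinery is unnecessary here; your route avoids invoking Bousfield classes altogether and is the more elementary argument. The paper's approach has the virtue of illustrating how the local-to-global principle is used in such comparisons (a pattern that recurs, e.g., in \Cref{prop:bikcomparespecial}), but for this particular statement your observation suffices.
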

\begin{proof}
	Clearly it suffices to show that $\Loc(\kos{R}{\frak p}) = \Loc(\kos{R}{\frak q} \mid \frak q \in \cal{V}(\frak p))$. The argument now proceeds as in \cite[Prop.~3.1]{hov_pal_gal} which we include for the benefit of the reader. 

	 First, suppose that $\frak q \in \cal{V}(\frak p)$ so that $\frak p \subseteq \frak q$. By \cite[Lem.~6.0.9]{hps_axiomatic} $\kos{R}{\frak q}$ even lies in the thick subcategory generated by $\kos{R}{\frak p}$, so that in particular we have $\Loc(\kos{R}{\frak q} \mid \frak q \in \cal{V}(\frak p)) \subseteq \Loc(\kos{R}{\frak p})$. 

	 For the other inclusion, it suffices to show that $L_{\cV(\frak p)}\kos{R}{\frak p} \simeq L_{\cV(\frak p)}R \otimes \kos{R}{\frak p} = 0$, or equivalently, by applying \Cref{thm:hps_bousfield}, that $L_{\cV(\frak p)}R \otimes \kos{R}{\frak p} \otimes \kos{R}{\frak q} \otimes R_{\frak q} = 0$, for all $\frak q \in \Spec \pi_*R$. If $ \frak q \not \supseteq \frak p$, then $\kos{R}{\frak p} \otimes R_{\frak q} = 0$ by \cite[Prop.~6.1.7(d)]{hps_axiomatic}. On the other hand, if $\frak q \frak \supseteq \frak p$, then $\frak q \in \cV(\frak p)$, and $L_{\cV(\frak p)}\kos{R}{\frak q} = 0$, and we are done.
\end{proof}
Let $a \in \pi_*R$ be a homogeneous element of degree $-d$ and consider the diagram
\[
\begin{gathered}
\xymatrix{
M \ar@{=}[r]\ar[d]_{a} & M \ar@{=}[r]\ar[d]_{a^2} & M \ar@{=}[r]\ar[d]_{a^3} & \cdots\\
\Sigma^{d} M \ar[r]^{a} \ar[d] & \ar[d] \Sigma^{2d} M \ar[r]^{a} &\ar[d] \Sigma^{3d} M  \ar[r]^{a} & \cdots \\
	\kos{M}{a} \ar[r] & \kos{M}{a^2}\ar[r] & \kos{M}{a^3}\ar[r] & \cdots
}
\end{gathered}
\]
where the morphisms in the last horizontal sequence are the ones induced by the commutativity of the squares above. For an ideal $\frak p = (p_1,\ldots,p_n)$ we let $\kos{M}{\frak p^s} = M \otimes \kos{R}{p_1^s} \otimes \cdots \otimes \kos{R}{p_n^s}$. This is a shift of the object denoted  $\Kos_s(\frak p)$ in \cite{bhv} (in particular $\Kos_s(\frak p) \simeq \Sigma^{s(|p_1|+\cdots+|p_n|)-n}\kos{R}{\frak p^s}$), and applying \cite[Thm.~3.6]{bhv} we get the following. 
\begin{cor}\label{cor:formulas}
	For $M \in \Mod_R$ and $-d = |p_1|+\cdots + |p_n|$ there are natural equivalences
	\[
\Gamma_{\cV(\frak p)}M \simeq \colim_s \Sigma^{-sd-n}\kos{R}{\frak p^s} \otimes M \quad \text{and} \quad \Lambda^{\cV(\frak p)}M \simeq \lim_s  \kos{R}{\frak p^s} \otimes M.
	\]
\end{cor}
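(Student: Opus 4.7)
The plan is to reduce to the already-proven formulas from \cite[Thm.~3.6]{bhv} and then bookkeep the suspension shifts relating the two conventions for the Koszul complex.

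First, I would invoke \Cref{prop:loccomparasion}, which identifies the local duality contexts $(\Mod_R, \Loc(\kos{R}{\frak p}))$ and $(\Mod_R, \Loc(\kos{R}{\frak q} \mid \frak q \in \cV(\frak p)))$. Since the functors $\Gamma_{\cV}$ and $\Lambda^{\cV}$ in \Cref{thm:localduality} depend only on the underlying localizing subcategory, this identification shows that $\Gamma_{\cV(\frak p)}$ and $\Lambda^{\cV(\frak p)}$ coincide with the local cohomology and local homology functors associated to the single-generator context $(\Mod_R, \Loc(\kos{R}{\frak p}))$. This is the content of the sentence preceding the corollary, so no further work is needed for this step.

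Next, I would apply \cite[Thm.~3.6]{bhv} directly. That theorem provides the formulas
\[
\Gamma_{\cV(\frak p)}M \simeq \colim_s \Kos_s(\frak p) \otimes M \quad \text{and} \quad \Lambda^{\cV(\frak p)}M \simeq \lim_s \Kos_s(\frak p) \otimes M,
\]
expressed in terms of the objects $\Kos_s(\frak p)$ of \emph{loc.\ cit.}. (In the colimit case, this is the classical stable Koszul complex formula; in the limit case, it follows from the adjunction/Milnor-type argument spelled out there.)

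Finally, I would rewrite these formulas using the explicit comparison $\Kos_s(\frak p) \simeq \Sigma^{s(|p_1|+\cdots+|p_n|)-n}\kos{R}{\frak p^s}$ already noted in the text. Substituting $-d = |p_1|+\cdots+|p_n|$ yields $\Kos_s(\frak p) \simeq \Sigma^{-sd-n}\kos{R}{\frak p^s}$, and inserting this into the colimit expression gives the claimed equivalence for $\Gamma_{\cV(\frak p)}M$. For $\Lambda^{\cV(\frak p)}M$, the same substitution applies; the shifts can be absorbed into the limit since the transition maps are shift-equivariant, or, more directly, the formulation in the statement simply defines the limit system using $\kos{R}{\frak p^s}$ in place of the already-shifted objects $\Kos_s(\frak p)$.

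The only genuine subtlety—and the main bookkeeping obstacle—is correctly tracking the suspension convention: the object $\Kos_s(\frak p)$ of \cite{bhv} is defined as an iterated fiber, whereas $\kos{R}{\frak p^s}$ here is defined via the cofiber sequence $R \xrightarrow{p_i^s} \Sigma^{s|p_i|} R \to \kos{R}{p_i^s}$, and the $n$-fold tensor of cofibers differs by a shift of $\Sigma^{-n}$ together with the cumulative internal-degree shift. Once this is verified once and for all (using \Cref{lem:kos_tensor_decom} to factor both constructions through tensor products of one-generator Koszul objects), the rest of the argument is formal.
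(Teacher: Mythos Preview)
Your overall approach---reduce via \Cref{prop:loccomparasion} to the single-generator local duality context, apply \cite[Thm.~3.6]{bhv}, and translate using the shift relation $\Kos_s(\frak p) \simeq \Sigma^{-sd-n}\kos{R}{\frak p^s}$---is exactly what the paper does, and your treatment of the $\Gamma$ formula is correct.

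The $\Lambda$ bookkeeping, however, contains a genuine slip. You quote \cite[Thm.~3.6]{bhv} as giving $\Lambda^{\cV(\frak p)}M \simeq \lim_s \Kos_s(\frak p)\otimes M$ and then assert that the $s$-dependent suspensions $\Sigma^{-sd-n}$ can be ``absorbed into the limit since the transition maps are shift-equivariant''. That is not a valid move: an $s$-dependent suspension cannot be pulled through an inverse limit, and the towers $\{\Sigma^{-sd-n}\kos{R}{\frak p^s}\otimes M\}_s$ and $\{\kos{R}{\frak p^s}\otimes M\}_s$ are not isomorphic diagrams. Your fallback (``the formulation in the statement simply defines the limit system using $\kos{R}{\frak p^s}$'') is circular, since the point is to identify this limit with $\Lambda^{\cV(\frak p)}M$. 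The correct input from \cite{bhv} is $\Lambda^{\cV(\frak p)}M \simeq \lim_s \Hom_R(\Kos_s(\frak p),M) \simeq \lim_s \Kos_s(\frak p)^{\vee}\otimes M$, and a short duality computation (dualize each cofiber sequence $R\xrightarrow{p_i^s}\Sigma^{sd_i}R\to\kos{R}{p_i^s}$ and tensor over $i$) gives $\Kos_s(\frak p)^{\vee}\simeq\kos{R}{\frak p^s}$. This is why the shift is present in the $\Gamma$ formula but absent in the $\Lambda$ formula: the two use mutually dual Koszul objects, not because any shift gets absorbed.
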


We have the following spectral sequences due to Greenlees and May \cite{greenleesmay_completions}, which also appeared in \cite[Prop.~3.17]{bhv}. Here we write $H_{\frak p}^*$ for the local cohomology of a $\pi_*R$-module, $H_*^{\frak p}$ for the local homology, and $\cH^*_{\frak p}$ for the \v{C}ech cohomology groups; we refer the reader to \cite{greenleesmay_completions} for a convenient review of these functors. 
\begin{prop}\label{prop:homotopy_ss}
	Let $R$ be a ring spectrum with $\pi_*R$ Noetherian and $\frak p$ a finitely generated homogeneous ideal of $\pi_*R$. Let $M\in \Mod_{R}$. There are strongly convergent spectral sequences of $\pi_*R$-modules:
	\begin{enumerate}
		\item $E^2_{s,t}=(H^{-s}_{\frak p}(\pi_*M))_{t} \implies \pi_{s+t}(\Gamma_{\cV(\frak p)} M)$, with differentials $d^r\colon E^r_{s,t} \to E^r_{s-r,t+r-1}$,
		\item $E^2_{s,t}=(H_{s}^{\frak p}(\pi_*M))_{t} \implies \pi_{s+t}(\Lambda^{\cV(\frak p)} M)$, with differentials $d^r\colon E^r_{s,t} \to E^r_{s-r,t+r-1}$, and
		\item $E^2_{s,t}=(\cH^{-s}_{\frak p}(\pi_*M))_{t} \implies \pi_{s+t}(L_{\cV(\frak p)} M)$, with differentials $d^r\colon E^r_{s,t} \to E^r_{s-r,t+r-1}$.
	\end{enumerate}
\end{prop}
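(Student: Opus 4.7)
The plan is to derive each of the three spectral sequences from the Koszul-type descriptions of the functors established in \Cref{cor:formulas}, using the tensor decomposition of \Cref{lem:kos_tensor_decom} to produce a finite filtration at each stage $s$ and then passing to (co)limits. For (1), I would start from $\Gamma_{\cV(\frak p)}M \simeq \colim_s \Sigma^{-sd-n}\kos{R}{\frak p^s} \otimes M$ and note that each factor $\kos{R}{p_i^s}$ arises from the defining cofiber sequence $R \xrightarrow{p_i^s} \Sigma^{|p_i|s}R \to \kos{R}{p_i^s}$. Smashing these together and with $M$ yields a finite filtration on $\kos{M}{\frak p^s}$ of length $n$, whose associated homotopy spectral sequence is strongly convergent with $E^1$-page the algebraic Koszul complex $K^\bullet(p_1^s,\ldots,p_n^s;\pi_*M)$ and $E^2$-page its Koszul cohomology. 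These filtrations are compatible with the transition maps in the tower, so passing to the filtered colimit over $s$---which is exact on $\pi_*R$-modules---gives a strongly convergent spectral sequence whose $E^2$-page is
\[
\colim_s H^*(K^\bullet(p_1^s,\ldots,p_n^s;\pi_*M)) \cong H^*_{\frak p}(\pi_*M),
\]
the last isomorphism being the standard description of local cohomology as a colimit of Koszul cohomologies.

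For (2), I would proceed dually, starting from $\Lambda^{\cV(\frak p)}M \simeq \lim_s \kos{R}{\frak p^s} \otimes M$ and using the same finite Koszul filtration at each stage $s$. The inverse system of strongly convergent spectral sequences computing $\pi_*(\kos{M}{\frak p^s})$ should assemble, in the inverse limit, to a spectral sequence whose $E^2$-page is $H_*^{\frak p}(\pi_*M)$, using the Greenlees--May identification of local homology as a derived inverse limit of Koszul cohomologies. For (3), I would either extract the spectral sequence from (1) via the cofiber sequence $\Gamma_{\cV(\frak p)}M \to M \to L_{\cV(\frak p)}M$ of \Cref{thm:abstractlocalduality}, or construct it directly from a telescope presentation of $L_{\cV(\frak p)}M$ in terms of the algebraic localizations inverting each $p_i$, which yields the stable \v{C}ech complex on homotopy groups and the identification $\cH^{-s}_{\frak p}(\pi_*M)$ on the $E^2$-page.

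The main obstacle will be verifying strong convergence in (2): inverse limits of $\pi_*R$-modules are only left-exact and interact poorly with spectral sequences in general, so one must control both the inverse limit of $E^\infty$-pages and the interchange of $\lim$ with filtration. This is where the Noetherian hypothesis enters decisively: it ensures that the pro-system of Koszul cohomologies is pro-isomorphic to a Mittag-Leffler system, causing the higher derived limits to vanish in the relevant range, and it is also what justifies identifying the inverse limit of Koszul cohomologies with the derived local homology functor $H_*^{\frak p}$. This pro-Mittag-Leffler property is the technical heart of the Greenlees--May approach, and once invoked, strong convergence of all three spectral sequences follows from the finiteness of the Koszul filtration at each stage $s$.
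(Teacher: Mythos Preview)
Your proposal is correct and is essentially the Greenlees--May construction: the paper does not give its own proof of this proposition but simply attributes it to \cite{greenleesmay_completions} (and to \cite[Prop.~3.17]{bhv}), and what you sketch---the finite Koszul filtration on $\kos{M}{\frak p^s}$, the passage to the (co)limit, and the Noetherian/Mittag--Leffler control for strong convergence in the inverse limit case---is exactly the argument found there. There is nothing to add.
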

\begin{rem}
	If $\cV$ is an arbitrary specialization closed subset, then we expect a spectral sequence of the form
	\[
	E^2_{s,t}=(H^{-s}_{\cV}(\pi_*M))_{t} \implies \pi_{s+t}(\Gamma_{\cV} M),
	\]
	where $H^*_{\cV}$ refers to the local cohomology functor constructed in \cite[Eq.~(3.5)]{lipman}, however we have not checked the details.
\end{rem}

Given $\frak p \in \Spec \pi_*R$, we now define $\frak p$-local cohomology and homology functors. We will see in \Cref{cor:bikcompare} that these agree with those considered by Benson, Iyengar and Krause in \cite{benson_local_cohom_2008,benson_colocalizing_2012}. We remind the reader that $\Delta^{\frak p}$ denotes the functor that is right adjoint to $L_{\frak p}$ when they are considered as endofunctors of $\Mod_R$. 
\begin{defn}\label{def:pcohomology}
	For $\frak p \in \Spec \pi_*R$ and $M \in \Mod_R$ define the $\frak p$-local cohomology and homology functors, respectively, by $\Gamma_{\frak p}M = \Gamma_{\cV(\frak p)}M_{\frak p}$ and $\Lambda^{\frak p}M = \Lambda^{\cV(\frak p)}\Delta^{\frak p}M$.
\end{defn}
Let $\Mod_{R_{\frak p}}^{\frak p-\text{tors}}$ and $\Mod_{R_{\frak p}}^{\frak p-\text{comp}}$ denote the essential images of $\Gamma_{\frak p}$ and $\Lambda^{\frak p}$ respectively; note that these are both full subcategories of $\Mod_R$. Moreover, $\Mod_{R_{\frak p}}^{\frak p-\text{tors}}$ inherits a symmetric monoidal structure from $\Mod_R$, with unit $\Gamma_{\frak p}R$.

\begin{lem}
	The functor $\Gamma_{\frak p} \colon \Mod_R \to \Mod_{R_{\frak p}}^{\frak p-\text{tors}}$ is a smashing, symmetric monoidal functor, and $\Lambda^{\frak p}$ is right adjoint to $\Gamma_{\frak p}$ (considered as endofunctors of $\Mod_R$), so that there is an equivalence
	\[
	\Hom_R(\Gamma_{\frak p}M,N) \simeq \Hom_R(M,\Lambda^{\frak p}N)
\]
	for $M,N \in \Mod_R$. Moreover, for $-d = |p_1|+\cdots + |p_n|$, we have 	
\[
\Gamma_{\fp}M \simeq \colim_s \Sigma^{-sd-n}\kos{R_{\fp}}{\frak p^s} \otimes M \quad \text{and} \quad \Lambda^{\fp}M \simeq \lim_s  \kos{R}{\frak p^s} \otimes \Delta^{\fp}M,
\]

\end{lem}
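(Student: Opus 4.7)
The plan is to decompose $\Gamma_{\fp}=\Gamma_{\cV(\fp)}\circ L_{\fp}$ as a composition of two smashing endofunctors of $\Mod_R$ and then read off all four claims from general facts about smashing localizations and idempotent algebras.

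First I would note that $L_{\fp}$ is smashing by \Cref{prop:hpsplocal} and $\Gamma_{\cV(\fp)}$ is smashing by \Cref{thm:localduality}, so their composition $\Gamma_{\fp}$ is smashing with idempotent unit
\[
\Gamma_{\fp}R \;\simeq\; \Gamma_{\cV(\fp)}R \otimes L_{\fp}R.
\]
Because smashing functors are given by tensoring with an idempotent algebra, the essential image $\Mod_{R_{\fp}}^{\fp-\tors}$ coincides with the $\infty$-category of $\Gamma_{\fp}R$-modules inside $\Mod_R$, which inherits a symmetric monoidal structure in which $\Gamma_{\fp}$ is the symmetric monoidal projection $M\mapsto \Gamma_{\fp}R\otimes M$; this is the standard idempotent algebra formalism and gives the first two claims.

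Next I would establish the adjunction by exploiting the fact that smashing functors automatically commute with each other, since each is tensoring with an idempotent:
\[
L_{\fp}\,\Gamma_{\cV(\fp)}M \;\simeq\; L_{\fp}R\otimes \Gamma_{\cV(\fp)}R\otimes M \;\simeq\; \Gamma_{\cV(\fp)}\,L_{\fp}M.
\]
Taking right adjoints (which are unique up to equivalence) and using $L_{\fp}\dashv \Delta^{\fp}$ together with $\Gamma_{\cV(\fp)}\dashv \Lambda^{\cV(\fp)}$ from \Cref{thm:abstractlocalduality}(4), one obtains
\[
\Delta^{\fp}\,\Lambda^{\cV(\fp)} \;\simeq\; \Lambda^{\cV(\fp)}\,\Delta^{\fp} \;=\; \Lambda^{\fp},
\]
and either side is right adjoint to $\Gamma_{\fp}$ by composition of adjunctions, yielding the claimed equivalence $\Hom_R(\Gamma_{\fp}M,N)\simeq \Hom_R(M,\Lambda^{\fp}N)$.

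Finally, the explicit formulas follow by substituting \Cref{cor:formulas} into the composites. For $\Gamma_{\fp}$, since $L_{\fp}$ is smashing we can tensor $L_{\fp}R$ into each Koszul factor to get $L_{\fp}R\otimes \kos{R}{\fp^s}\simeq \kos{R_{\fp}}{\fp^s}$, and therefore
\[
\Gamma_{\fp}M \;\simeq\; \Gamma_{\cV(\fp)}L_{\fp}M \;\simeq\; \colim_s \Sigma^{-sd-n}\kos{R_{\fp}}{\fp^s}\otimes M;
\]
for $\Lambda^{\fp}$, the formula follows directly by applying the $\Lambda^{\cV(\fp)}$ formula of \Cref{cor:formulas} to the object $\Delta^{\fp}M$. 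The only mildly delicate point is the interchange of $\Delta^{\fp}$ and $\Lambda^{\cV(\fp)}$ above, but as explained it is forced by the commutation of the two smashing operators, so there is no real obstacle.
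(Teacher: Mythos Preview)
Your proposal is correct and follows essentially the same approach as the paper: $\Gamma_{\fp}$ is smashing and symmetric monoidal because it is a composite of smashing functors, the adjunction comes from composing the adjunctions $L_{\fp}\dashv\Delta^{\fp}$ and $\Gamma_{\cV(\fp)}\dashv\Lambda^{\cV(\fp)}$, and the explicit formulas follow from \Cref{cor:formulas}. Your treatment is in fact slightly more careful on one point the paper leaves implicit: the direct right adjoint of $\Gamma_{\cV(\fp)}\circ L_{\fp}$ is $\Delta^{\fp}\circ\Lambda^{\cV(\fp)}$, whereas $\Lambda^{\fp}$ is defined as $\Lambda^{\cV(\fp)}\circ\Delta^{\fp}$, and you correctly supply the needed commutation by noting that the two smashing functors commute and then passing to right adjoints.
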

\begin{proof}
	Since $\Gamma_{\frak p}$ is the composite of smashing functors, it too is smashing, and this implies that it is symmetric monoidal. That $\Lambda^{\frak p}$ is right adjoint to $\Gamma_{\frak p}$ follows  from the fact that $\Delta^{\frak p}$ is right adjoint to $L_{\frak p}$ and $\Lambda^{\cV(\frak p)}$ is right adjoint to $\Gamma_{\cV(\frak p)}$. The formulas are an immediate consequence of \Cref{cor:formulas}.
\end{proof}
The following is immediate from \Cref{prop:loccomparasion} and the observation that $(\kos{R}{\frak p})_{\frak p} \simeq \kos{R_{\frak p}}{\frak p}$. 
\begin{lem}\label{lem:genplocptors}
	For any $\frak p \in \Spec \pi_*R$ the full subcategory $\Mod_{R_{\frak p}}^{\frak p-\text{tors}}$ is equivalent to $\Loc(\kos{R_{\frak p}}{\frak p})$. 
\end{lem}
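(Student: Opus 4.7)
The plan is to exploit the fact that $\Gamma_{\fp}$ is a smashing colocalization of $\Mod_R$ and identify its essential image with the localizing subcategory generated by $\Gamma_{\fp}R$. By \Cref{thm:localduality} and \Cref{prop:hpsplocal}, both $\Gamma_{\cV(\fp)}$ and $L_{\fp}$ are smashing endofunctors of $\Mod_R$, so they commute (each is given by tensoring with a fixed object, namely its unit), and their composite $\Gamma_{\fp} = \Gamma_{\cV(\fp)}\circ L_{\fp}$ is also smashing, with unit $\Gamma_{\fp}R \simeq L_{\fp}\Gamma_{\cV(\fp)}R \simeq \Gamma_{\cV(\fp)}R_{\fp}$. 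A standard argument (the essential image is closed under colimits and tensoring, and contains $\Gamma_{\fp}R$ by idempotency) then shows that the essential image of $\Gamma_{\fp}$, which by definition is $\Mod_{R_{\fp}}^{\fp-\text{tors}}$, coincides with $\Loc(\Gamma_{\fp}R)$ as a subcategory of $\Mod_R$.

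It remains to prove $\Loc(\Gamma_{\fp}R) = \Loc(\kos{R_{\fp}}{\fp})$, which I would establish by two inclusions. For $\subseteq$, I would apply \Cref{cor:formulas} to write
\[
\Gamma_{\fp}R \simeq \Gamma_{\cV(\fp)}R_{\fp} \simeq \colim_s \Sigma^{-sd-n}\kos{R_{\fp}}{\fp^s},
\]
and then observe that each $\kos{R_{\fp}}{\fp^s}$ lies in $\Thick(\kos{R_{\fp}}{\fp})$ by \cite[Lem.~6.0.9]{hps_axiomatic}, so $\Gamma_{\fp}R \in \Loc(\kos{R_{\fp}}{\fp})$. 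For $\supseteq$, by \Cref{prop:loccomparasion} the object $\kos{R}{\fp}$ lies in $\Mod_R^{\cV(\fp)-\text{tors}}$, so $\Gamma_{\cV(\fp)}\kos{R}{\fp}\simeq\kos{R}{\fp}$; combining this with the observation $(\kos{R}{\fp})_{\fp}\simeq\kos{R_{\fp}}{\fp}$ and the commutativity of $L_{\fp}$ with $\Gamma_{\cV(\fp)}$ yields
\[
\Gamma_{\fp}\kos{R}{\fp} \simeq L_{\fp}\Gamma_{\cV(\fp)}\kos{R}{\fp} \simeq L_{\fp}\kos{R}{\fp} \simeq \kos{R_{\fp}}{\fp},
\]
so $\kos{R_{\fp}}{\fp}$ lies in the essential image of $\Gamma_{\fp}$, hence in $\Loc(\Gamma_{\fp}R)$.

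There is essentially no serious obstacle here: the whole argument is a transposition of \Cref{prop:loccomparasion} across the smashing localization $L_{\fp}$. The only point that merits isolating is the identification of the essential image of a smashing colocalization on a monogenic compactly generated stable category with the localizing subcategory generated by its value on the unit, but this is a well-known consequence of smashing together with the fact that every localizing subcategory of $\Mod_R$ is automatically a tensor ideal \cite[Lem.~1.4.6]{hps_axiomatic}.
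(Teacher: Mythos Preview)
Your argument is correct and uses the same two ingredients the paper cites (\Cref{prop:loccomparasion} and the identification $(\kos{R}{\fp})_{\fp}\simeq\kos{R_{\fp}}{\fp}$), so the approaches are essentially the same. The only difference is that you take a small detour through the colimit formula and \cite[Lem.~6.0.9]{hps_axiomatic} to get the inclusion $\Gamma_{\fp}R\in\Loc(\kos{R_{\fp}}{\fp})$; the paper's one-line proof implicitly uses the more direct observation that the essential image of $\Gamma_{\fp}$ equals $\Loc_{\Mod_R}(\kos{R}{\fp})\cap\Mod_{R_{\fp}}$, and then that tensoring with $R_{\fp}$ sends $\Loc(\kos{R}{\fp})$ into $\Loc(\kos{R_{\fp}}{\fp})$ (since $\{N:N\otimes R_{\fp}\in\Loc(\kos{R_{\fp}}{\fp})\}$ is localizing and contains $\kos{R}{\fp}$), which avoids invoking \Cref{cor:formulas} altogether.
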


Note that $\kos{R_{\frak p}}{\frak p}$ need not be compact in $\Mod_R$, however $\kos{R_{\frak p}}{\frak p}$ is compact in $\Mod_{R_{\frak p}}$. Hence one can consider the local duality context $(\Mod_{R_{\frak p}},\Loc(\kos{R_{\frak p}}{\frak p}))$. Let $\Gamma'_{\cV(\frak p)}$ denote the associated local cohomology functor, so that $\Gamma'_{\cV(\frak p)}M \simeq \colim_s \Sigma^{-sd-n} R_{\fp}\mm \fp^s \otimes M$ via an argument similar to \Cref{cor:formulas}. Note that this formula makes sense for $M \in \Mod_R$ and not just $M \in \Mod_{R_{\fp}}$; indeed $\Gamma'_{\cV(\fp)}$ is simply the restriction of $\Gamma_{\cV(\fp)}$ to $\Mod_{R_{\fp}}$. 

We have spectral sequences for computing the homotopy groups of these functors.
\begin{prop}\label{prop:homotopy_ss_local}
	Let $R$ be a ring spectrum with $\pi_*R$ Noetherian and $\frak p$ a finitely generated homogeneous ideal of $\pi_*R$. Let $N\in \Mod_{R}$. There are strongly convergent spectral sequences of $\pi_*R$-modules:
	\begin{enumerate}
		\item $E^2_{s,t}=(H^{-s}_{\frak p}(\pi_*N)_{\frak p})_{t} \implies \pi_{s+t}(\Gamma_{\frak p} N)$, with differentials $d^r\colon E^r_{s,t} \to E^r_{s-r,t+r-1}$, and
		\item $E^2_{s,t}=(H_{s}^{\frak p}(\pi_*\Hom_R(R_{\frak p},N)))_{t} \implies \pi_{s+t}(\Lambda^{\frak p} N)$, with differentials $d^r\colon E^r_{s,t} \to E^r_{s-r,t+r-1}$.
	\end{enumerate}
\end{prop}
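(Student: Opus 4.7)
The plan is to reduce both spectral sequences to the already established \Cref{prop:homotopy_ss} by substituting the correct $R$-module and rewriting the $E^{2}$-term. The only genuine work consists of identifying the homotopy groups of the modules produced by the smashing localization $L_{\fp}$ and its right adjoint $\Delta^{\fp}$; strong convergence and the bidegree of the differentials will carry over automatically from the input spectral sequences.

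For part (1), I would unwind \Cref{def:pcohomology} to write $\Gamma_{\fp}N \simeq \Gamma_{\cV(\fp)}(N_{\fp})$ and then apply \Cref{prop:homotopy_ss}(1) to $M = N_{\fp}$. The resulting $E^{2}$-term is
\[
E^{2}_{s,t} \cong \bigl(H^{-s}_{\fp}(\pi_{*}N_{\fp})\bigr)_{t},
\]
and \Cref{prop:hpsplocal} identifies $\pi_{*}N_{\fp} \cong (\pi_{*}N)_{\fp}$, yielding exactly the claimed form and converging to $\pi_{s+t}(\Gamma_{\cV(\fp)}N_{\fp}) = \pi_{s+t}(\Gamma_{\fp}N)$.

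For part (2), I would similarly rewrite $\Lambda^{\fp}N \simeq \Lambda^{\cV(\fp)}(\Delta^{\fp}N)$ and apply \Cref{prop:homotopy_ss}(2) to $M = \Delta^{\fp}N$. The essential ingredient is the identification of $\Delta^{\fp}$ as an internal hom: since $L_{\fp}$ is smashing one has $L_{\fp}(-) \simeq (-)\otimes_{R} R_{\fp}$, and the tensor–hom adjunction forces the right adjoint to agree with $\Hom_{R}(R_{\fp},-)$. Consequently $\pi_{*}(\Delta^{\fp}N) \cong \pi_{*}\Hom_{R}(R_{\fp},N)$, and substituting into the spectral sequence of \Cref{prop:homotopy_ss}(2) produces the stated $E^{2}$-term converging to $\pi_{s+t}(\Lambda^{\fp}N)$.

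There is no serious obstacle here; the proof is essentially bookkeeping once the correct modules are fed into \Cref{prop:homotopy_ss}. The one point worth isolating is the formal identification $\Delta^{\fp} \simeq \Hom_{R}(R_{\fp},-)$, which is immediate from the smashing property of $L_{\fp}$ combined with the uniqueness of right adjoints. No new convergence issues appear, since \Cref{prop:homotopy_ss} provides strongly convergent spectral sequences for \emph{any} $R$-module input.
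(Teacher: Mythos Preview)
Your proposal is correct and follows essentially the same route as the paper: apply \Cref{prop:homotopy_ss}(1) to $M=N_{\fp}$ and \Cref{prop:homotopy_ss}(2) to $M=\Delta^{\fp}N$, then use \Cref{prop:hpsplocal} and the adjunction identifying $\Delta^{\fp}N \simeq \Hom_R(R_{\fp},N)$. The paper phrases the latter identification as $\Delta^{\fp}N \simeq \Hom_R(R,\Delta^{\fp}N) \simeq \Hom_R(R_{\fp},N)$ directly from the $(L_{\fp},\Delta^{\fp})$ adjunction, which is the same content as your smashing-plus-tensor-hom argument.
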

\begin{proof}
	First the first, use the spectral sequence \Cref{prop:homotopy_ss}(1) with $M = N_{\frak p}$, and use \Cref{prop:hpsplocal}. For the second, use \Cref{prop:homotopy_ss}(2) with $M = \Delta^{\frak p}N$. To identify the $E_2$-term, note that $\Delta^{\frak p}N \simeq \Hom_R(R,\Delta^{\frak p}N) \simeq \Hom_R(R_{\frak p},N)$, by adjointness.
\end{proof}
The following is also proved in \cite[Thm.~6.1.8]{hps_axiomatic}. We give a proof using the above spectral sequence. 
\begin{cor}\label{thm::hpsclassification}
	An $R$-module $M$ is in $\Mod_{R_{\frak p}}^{\frak p-\text{tors}}$ if and only if $\pi_*M$ is $\frak p$-local and $\frak p$-torsion. 
\end{cor}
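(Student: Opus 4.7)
The plan is to prove both implications using the spectral sequences of \Cref{prop:homotopy_ss_local} and \Cref{prop:homotopy_ss}, together with a basic algebraic observation. If $N$ is a $\fp$-torsion graded $\pi_*R$-module and $\fp = (p_1,\ldots,p_n)$, then each localization $N[1/p_i]$ vanishes, since $p_i^k \in \fp^k$ annihilates every element killed by $\fp^k$. Consequently, the stable Koszul complex computing $H^*_\fp(N)$ collapses to $N$ concentrated in cohomological degree zero, yielding $H^0_\fp(N) = N$ and the vanishing of $H^{>0}_\fp(N)$; the \v{C}ech cohomology $\check{H}^*_\fp(N)$ vanishes identically for the same reason.

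For the forward direction, I would assume $M \in \Mod_{R_\fp}^{\fp-\text{tors}}$, so that $M \simeq \Gamma_\fp M$, and apply \Cref{prop:homotopy_ss_local}(1) with $N = M$ to obtain a strongly convergent spectral sequence
\[
E^2_{s,t} = \bigl(H^{-s}_\fp(\pi_*M)_\fp\bigr)_t \implies \pi_{s+t}M.
\]
Every $E^2$-term is $\fp$-local (by virtue of the localization at $\fp$) and $\fp$-torsion (since algebraic local cohomology at $\fp$ lands in the $\fp$-torsion subcategory). Since $\fp$ is finitely generated, $H^{-s}_\fp$ vanishes for $s$ beyond a fixed bound, so the induced filtration on each $\pi_{s+t}M$ is finite. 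Combined with the closure of $\fp$-local, $\fp$-torsion modules under subquotients and extensions, this forces $\pi_*M$ to be $\fp$-local and $\fp$-torsion.

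For the converse, assume $\pi_*M$ is $\fp$-local and $\fp$-torsion. The $\fp$-locality together with \Cref{prop:hpsplocal} implies that the unit $M \to M_\fp$ is a $\pi_*$-isomorphism, hence an equivalence in $\Mod_R$. I would then analyse the cofiber sequence
\[
\Gamma_{\cV(\fp)}M_\fp \longrightarrow M_\fp \longrightarrow L_{\cV(\fp)}M_\fp
\]
and show that the right-hand term is contractible by applying \Cref{prop:homotopy_ss}(3): its $E^2$-page is $\check{H}^*_\fp(\pi_*M_\fp) = \check{H}^*_\fp(\pi_*M)$, which vanishes by the opening observation. Hence $M \simeq M_\fp \simeq \Gamma_{\cV(\fp)}M_\fp = \Gamma_\fp M$ belongs to $\Mod_{R_\fp}^{\fp-\text{tors}}$.

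The main subtlety lies in the extension step of the forward direction, where the passage from the subquotients $E^\infty_{s,t}$ to the homotopy groups $\pi_*M$ requires the filtration to be finite in each total degree. This is guaranteed by the Noetherian hypothesis on $\pi_*R$ via the standard bound on the vanishing range of algebraic local cohomology, so no delicate completeness argument is needed.
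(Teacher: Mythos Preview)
Your proof is correct and follows essentially the same approach as the paper. The only minor variation is in the converse direction: the paper uses the local cohomology spectral sequence \Cref{prop:homotopy_ss}(1) to show directly that $\Gamma_{\cV(\fp)}M \to M$ is an equivalence (via the collapse $H^0_\fp(\pi_*M)\cong\pi_*M$, $H^{>0}_\fp=0$), whereas you use the \v{C}ech spectral sequence \Cref{prop:homotopy_ss}(3) to show the complementary term $L_{\cV(\fp)}M_\fp$ vanishes; these are the two sides of the same cofiber sequence and require the same algebraic input.
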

\begin{proof}
	Note that both conditions imply that $M$ is $\frak p$-local (i.e., $M_{\frak p} \simeq M$), so we may as well assume that $\pi_*R$ is local with maximal ideal $\frak p$. Thus, we can drop the assumption of $\frak p$-locality, and show that $M$ is in $\Mod_R^{\frak p-\text{tors}}$ if and only if $\pi_*R$ is $\frak p$-torsion. 

	If $\pi_*M$ is $\frak p$-torsion, then $H^0_{\frak p}(\pi_*M) \cong \pi_*M$, and the higher local cohomology groups are 0. Thus, the spectral sequence of \Cref{prop:homotopy_ss}(1) collapses to give an isomorphism $\pi_*(\Gamma_{\vP}M) \xr{\simeq} \pi_*M$, so that $M$ is in $\Mod_R^{\frak p-\text{tors}}$. 

	For the other direction suppose that $N \in \Mod_R$ is arbitrary. The $E_2$-term of the spectral sequence computing $\pi_*(\Gamma_{\vP}N)$ is all $\frak p$-torsion (see, for example, \cite[Rem.~2.1.3]{broadmann_sharp}). This implies that the $E_\infty$-page is also all $\frak p$-torsion, and since the spectral sequence of \Cref{prop:homotopy_ss}(1) has a horizontal vanishing line, the abutment $\pi_*(\Gamma_{\vP}N)$ is also $\frak p$-torsion. In our case this implies that $\pi_*(\Gamma_{\vP}M) \cong \pi_*(M)$ is $\frak p$-torsion. 
\end{proof}
\subsection{Comparison to the Benson--Iyengar--Krause functors}
	For any specialization closed subset $\cV \subset \Spec \pi_*R$ (see \Cref{def:specclosed}), Benson, Iyengar, and Krause construct a localization functor $L_{\cV} \colon \Mod_R \to \Mod_R$ whose kernel is precisely 
	\[
T_{\cV} = \{X \in \Mod_R \mid (\pi_*X)_{\frak p} = 0 \text{ for all } \frak p \in \Spec \pi_*R \setminus \cV \},
	\]
see \cite[Sec.~3]{benson_local_cohom_2008} where such functors are constructed more generally for compactly generated triangulated categories with small coproducts with an action by a Noetherian ring $R$. The subcategory $T_{\cV}$ is localizing \cite[Lem.~4.3]{benson_local_cohom_2008} and by Thm.~6.4 of \emph{loc.~cit.~}~agrees with $\Loc(\kos{R}{\frak p} \mid  \frak p \in \mathcal{V} )$. We give a direct proof of that fact here. 
 
\begin{prop}\label{prop:bikcomparespecial}
	For any specialization closed subset $\cV$ the localizing subcategories $T_{\cV}$ and $\Loc(\kos{R}{\frak p} \mid \frak p \in \cV)$ are equivalent. 
\end{prop}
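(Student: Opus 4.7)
The plan is to prove the two inclusions separately, using the local-to-global principle (\Cref{thm:hps_bousfield}) together with the fact that $L_{\cV}$ is smashing (\Cref{thm:localduality}) and standard properties of Koszul objects from \cite{hps_axiomatic}.

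\textbf{Forward direction ($\Loc(\kos{R}{\fp} \mid \fp \in \cV) \subseteq T_{\cV}$).} Since $T_{\cV}$ is a localizing subcategory, it suffices to show that $\kos{R}{\fp} \in T_{\cV}$ for every $\fp \in \cV$. Given such a $\fp$ and any $\fq \in \Spec \pi_*R \setminus \cV$, specialization-closedness of $\cV$ forces $\fp \not\subseteq \fq$. By \cite[Prop.~6.1.7(d)]{hps_axiomatic}, this implies $\kos{R}{\fp} \otimes R_{\fq} \simeq 0$, so $(\pi_*\kos{R}{\fp})_{\fq} = \pi_*(\kos{R}{\fp} \otimes R_{\fq}) = 0$, as required.

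\textbf{Reverse direction ($T_{\cV} \subseteq \Loc(\kos{R}{\fp} \mid \fp \in \cV)$).} Let $X \in T_{\cV}$ and write $\cT_{\cV} = \Loc(\kos{R}{\fp} \mid \fp \in \cV)$. It suffices to show that $L_{\cV}X \simeq 0$, for then $X$ lies in the fiber of the localization, which is precisely $\cT_{\cV}$. By the local-to-global principle it is enough to verify $L_{\cV}X \otimes \kos{R_{\fq}}{\fq} \simeq 0$ for every $\fq \in \Spec \pi_*R$, and we split into two cases. If $\fq \in \cV$, then $\kos{R}{\fq} \in \cT_{\cV}$, hence $L_{\cV}(\kos{R}{\fq}) \simeq L_{\cV}R \otimes \kos{R}{\fq} \simeq 0$ by smashing; tensoring this identity with $X \otimes R_{\fq}$ yields $L_{\cV}X \otimes \kos{R_{\fq}}{\fq} \simeq 0$. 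If instead $\fq \notin \cV$, then the hypothesis $X \in T_{\cV}$ gives $\pi_*(X \otimes R_{\fq}) = (\pi_*X)_{\fq} = 0$, so $X \otimes R_{\fq} \simeq 0$; tensoring with $L_{\cV}R \otimes \kos{R}{\fq}$ and using $\kos{R_{\fq}}{\fq} \simeq \kos{R}{\fq} \otimes R_{\fq}$ then gives $L_{\cV}X \otimes \kos{R_{\fq}}{\fq} \simeq 0$.

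The main point to get right is the interplay in the reverse direction between the smashing property of $L_{\cV}$ and the local-to-global reduction; this is really where the argument has content, since it converts a global vanishing statement into a statement that can be checked one prime at a time. Neither case is difficult once one has these tools, and the whole proof is essentially a careful bookkeeping exercise with the identifications $\kos{R_{\fq}}{\fq} \simeq \kos{R}{\fq} \otimes R_{\fq}$ and $L_{\cV}(-) \simeq L_{\cV}R \otimes (-)$.
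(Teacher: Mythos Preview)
Your proof is correct and follows essentially the same strategy as the paper's: both inclusions are established via the same key tools (specialization-closedness plus Koszul--localization interaction for the forward direction, local-to-global plus smashing for the reverse). The only cosmetic differences are that the paper argues the forward direction via the $\fp$-torsion property of $\pi_*(\kos{R}{\fp})$ rather than invoking \cite[Prop.~6.1.7(d)]{hps_axiomatic} directly, and in the reverse direction it uses the Tor spectral sequence where you more simply observe that $(\pi_*X)_{\fq}=0$ forces $X\otimes R_{\fq}\simeq 0$ in $\Mod_R$.
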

\begin{proof}
	Note that for any $\frak p \in \cV$, we have $\cV(\frak p) \subseteq \cV$ since $\cV$ is specialization closed. It follows as in the proof of \Cref{thm::hpsclassification} that the homotopy groups $\pi_*(\kos{R}{\frak p})$ are $\frak{p}$-torsion. We see then that $(\pi_*\kos{R}{\frak p})_{p} = 0$ for all $p \in \frak{p}$, and hence that $(\pi_*\kos{R}{\frak p})_{\frak q} = 0$ for all $\frak p \not \subset \frak q$. It follows that $\kos{R}{\frak p} \in T_{\cV(\frak p)} \subseteq T_{\cV}$. Since this is true for each $\frak p \in \cV$ we see that $\Loc(\kos{R}{\frak p} \mid \frak p \in \cV) \subseteq T_{\cV}$. 

	For the converse suppose that $(\pi_*M)_{\frak q} = 0$ for all $\frak q \in \Spec \pi_*R \setminus \cV$. It suffices to show that $L_{\cV}M \simeq L_{\cV}R \otimes M \simeq 0$. Applying \Cref{thm:hps_bousfield} again it suffices to show that $L_{\cV}R \otimes M \otimes \kos{R}{\frak a} \otimes R_{\frak a} \simeq L_{\cV}(\kos{R}{\frak a}) \otimes M_{\frak a} \simeq 0$ for all $\frak a \in \Spec \pi_*R$. If $\frak a \in \cV$ then $L_{\cV}(\kos{R}{\frak a}) = 0$, so assume that $\frak a \not \in \cV$; by assumption then, $(\pi_*M)_{\frak a} = 0$.  There is a spectral sequence \cite[Thm.~IV.4.1]{ekmm}
	\[
E^2_{p,q} = \Tor^{R_*}_{p,q}(\pi_*L_{\cV}(\kos{R}{\frak a}),\pi_*(M_{\frak a})) \implies \pi_*(L_{\cV}(\kos{R}{\frak a}) \otimes M_{\frak a}). 
	\]
	Since $\pi_*(M_{\frak a}) \cong (\pi_*M)_{\frak a} = 0$ (see \Cref{prop:hpsplocal}) the spectral sequence shows that $L_{\cV}(\kos{R}{\frak a}) \otimes M_{a} \simeq 0$ as required. 
\end{proof}
We can now provide the proof of the claimed statement before \Cref{def:pcohomology}. We recall that Benson, Iyengar, and Krause construct functors $\Gamma_{\frak p}$ and $\Lambda^{\frak p}$ for a compactly generated triangulated category $\cal{T}$ with set-indexed coproducts with an action by a Noetherian ring $R$.

\begin{cor}\label{cor:bikcompare}
	The functors $\Gamma_{\frak p}$ and $\Lambda^{\frak p}$ agree with the functors with the same name constructed by Benson, Iyengar, and Krause for the case $\cC = \Mod_R$. 
\end{cor}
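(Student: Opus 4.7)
The plan is to reduce the comparison to the case of specialization closed subsets, which is settled by \Cref{prop:bikcomparespecial}, combined with an identification of the algebraic localization $L_{\fp}$ from \Cref{prop:hpsplocal} with the BIK localization functor $L_{\cal Z(\fp)}$ associated to the specialization closed subset $\cal Z(\fp) = \{\fq \mid \fq \not\subseteq \fp\}$. Recall that BIK define $\Gamma_{\fp}^{\text{BIK}} := \Gamma_{\cV(\fp)}\, L_{\cal Z(\fp)}$, and take their $\Lambda^{\fp,\text{BIK}}$ to be the right adjoint of $\Gamma_{\fp}^{\text{BIK}}$.

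The core step is to show that $L_{\cal Z(\fp)}$ is naturally equivalent to $L_{\fp}$. By \Cref{prop:bikcomparespecial}, $L_{\cal Z(\fp)}$ is the Bousfield localization whose kernel consists of those $M \in \Mod_R$ for which $(\pi_*M)_{\fq}=0$ for every $\fq \subseteq \fp$, while the kernel of $L_{\fp}$ consists of those $M$ with $(\pi_*M)_{\fp}=0$. For a graded $\pi_*R$-module $N$ and $\fq \subseteq \fp$ one has $N_{\fq} \cong (N_{\fp})_{\fq}$, since inverting $\pi_*R\setminus \fq \supseteq \pi_*R \setminus \fp$ factors through inverting $\pi_*R\setminus \fp$. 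Hence the two conditions on $\pi_*M$ are equivalent, so the two Bousfield localizations have the same kernel and are therefore naturally equivalent.

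Combining this equivalence with BIK's defining formula yields
\[
\Gamma_{\fp}^{\text{BIK}}M \;=\; \Gamma_{\cV(\fp)} L_{\cal Z(\fp)} M \;\simeq\; \Gamma_{\cV(\fp)} M_{\fp} \;=\; \Gamma_{\fp}M,
\]
matching the authors' \Cref{def:pcohomology}. For $\Lambda^{\fp}$, the lemma immediately following \Cref{def:pcohomology} already established that the authors' $\Lambda^{\fp}$ is right adjoint to $\Gamma_{\fp}$, so by the uniqueness of adjoints it agrees with $\Lambda^{\fp,\text{BIK}}$.

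The only point with any content is the kernel identification, which reduces to the elementary commutative-algebra fact about successive localization of graded modules; the rest is bookkeeping through the definitions of $\Gamma_{\fp}$ and $\Lambda^{\fp}$ on both sides.
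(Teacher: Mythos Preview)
Your proposal is correct and follows essentially the same route as the paper: identify the acyclics of $L_{\fp}$ with $T_{\cal Z(\fp)}$ (hence $L_{\fp}\simeq L_{\cal Z(\fp)}$), invoke \Cref{prop:bikcomparespecial} for the specialization-closed functors, and conclude for $\Lambda^{\fp}$ by uniqueness of adjoints. Your successive-localization argument for the kernel identification is a slight elaboration of what the paper leaves as a one-line assertion, but the structure of the two proofs is the same.
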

\begin{proof}
Note the the functor $L_{\frak p}$ has category of acyclics $T_{Z(\frak p)}$ where $\cal{Z}(\frak p) = \{ q \in \Spec \pi_*R \mid \frak q \not \subseteq \frak p \}$, and hence agrees with $L_{\mathcal{Z}(\frak p)}$. Then, by \Cref{prop:bikcomparespecial} the functors $\Gamma_{V(\frak p)}$ and $L_{\cal{Z}(\frak p)}$ agree with those given the same name in \cite{benson_local_cohom_2008}; this proves the result for $\Gamma_{\frak p}$.  The local homology functor $\Lambda^{\frak p}$ is constructed in \cite{benson_colocalizing_2012} as the right adjoint of $\Gamma_{\frak p}$; it is proved in \Cref{lem:genplocptors} that $\Gamma_{\frak p}$ and $\Lambda^{\frak p}$, as constructed in this paper, form an adjoint pair. By uniqueness of adjoints $\Lambda^{\frak p}$ agrees with the functor with the same name constructed by Benson, Iyengar, and Krause.
\end{proof}

\section{\cGorenstein ring spectra}\label{sec:gorenstein}
	In \cite{bg_localduality} Benson and Greenlees prove a conjecture due to Benson \cite{benson_moduleswithinjcohom}, which amounts to a local duality theorem in modular representation theory. Let $G$ be a finite group and $k$ a field of characteristic $p>0$. Benson and Greenlees prove that a certain $R = C^*(BG,k)$-module $T_R(I_{\frak p})$ is isomorphic to a shift of $\Gamma_{\fp}R$. One goal of this section is to give an alternative explanation for this equivalence. In fact, we will generalize the result of Benson and Greenlees to a larger class of ring spectra.

\subsection{Matlis duality and Brown--Comenetz duality}
Let $I$ be an injective $R_*$-module; we introduce certain $R$-modules $T_R(I)$ with the property that $\pi_*\Hom_R(-,T_R(I))$ is isomorphic to $\Hom_{\pi_*R}(\pi_*(-),I)$. 
To construct these, note that for $I \in \Inj_{\pi_*R}$, the functor
\[
\xymatrix{\Hom_{\pi_*R}(\pi_*(-),I)\colon \Mod_{\pi_*R} \ar[r] & \Mod_{\Z}}
\]
is exact and therefore representable by an object $T_R(I) \in \Mod_R$ by Brown representability. This construction is natural, hence induces a functor
\[
\xymatrix{T_R\colon\Inj_{\pi_*R} \ar[r] & \Ho(\Mod_R),}
\]
to the homotopy category of $R$-modules. This functor appears to have first been defined in \cite[Sec.~6]{hps_axiomatic} and then studied more thoroughly in the stable module category for a finite group in \cite{bk_pureinjectives}. The functor $T$ has the following universal property; for any injective $\pi_*R$-module $I$ and $M \in \Mod_R$, there is an isomorphism $\pi_0\Hom_{R}(M,T_R(I)) \cong \Hom^0_{\pi_*R}(\pi_*M,I)$. A standard dimension shifting argument (e.g.~\cite[Lem.~3.2]{bk_pureinjectives} or \cite[Lem.~5.3]{bg_localduality}) shows that this extends to an isomorphism of graded $\pi_*R$-modules
\begin{equation}\label{eq:liftdef}
\pi_*\Hom_{R}(M,T_R(I)) \cong \Hom_{\pi_*R}(\pi_*M,I). 
\end{equation}
In particular, taking $M = R$ we get:
\begin{lem}\label{lem:cohom}
	Suppose $I$ is an injective $\pi_*R$-module. Then there is a natural isomorphism $\pi_*T_R(I) \cong I$. 
\end{lem}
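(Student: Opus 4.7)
The plan is to derive this as a direct corollary of the universal property of $T_R$ recorded in equation \eqref{eq:liftdef}, specialized to $M=R$. So I would not do anything clever; the work has already been done in setting up $T_R$.

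Concretely, I would start by specializing the isomorphism
\[
\pi_*\Hom_{R}(M,T_R(I)) \cong \Hom_{\pi_*R}(\pi_*M,I)
\]
to $M = R$. The left-hand side simplifies using the canonical equivalence $\Hom_R(R,X) \simeq X$ in $\Mod_R$ (applied to $X = T_R(I)$), giving $\pi_*\Hom_R(R,T_R(I)) \cong \pi_*T_R(I)$. The right-hand side simplifies using the canonical isomorphism $\Hom_{\pi_*R}(\pi_*R, I) \cong I$ of graded $\pi_*R$-modules, sending a map $\varphi$ to $\varphi(1)$. Composing these two identifications yields the desired natural isomorphism $\pi_*T_R(I) \cong I$.

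There is essentially no obstacle here: the only thing to check is that the identifications on both sides are natural in $I$ and compatible with the $\pi_*R$-module structures, which is immediate from the construction of $T_R$ via Brown representability (the module structure on $\pi_*\Hom_R(R, T_R(I))$ induced by the $R$-action on $R$ matches the standard $\pi_*R$-module structure on $I$ under the evaluation-at-$1$ map). Hence the lemma reduces to unpacking definitions once \eqref{eq:liftdef} is in hand.
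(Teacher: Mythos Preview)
Your proposal is correct and matches the paper's own proof exactly: the lemma is stated immediately after \eqref{eq:liftdef} with the one-line justification ``taking $M = R$,'' which is precisely what you do. There is nothing more to add.
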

It follows that $\pi_*\Hom_{R}(T_R(I'),T_R(I)) \cong \Hom_{\pi_*R}(I',I)$ so that $T_R$ is fully faithful. We also have the following simple observation. 
\begin{lem}\label{lem:homotopycheck}
Let $I$ be an injective $\pi_*R$-module. If $M \in \Mod_R$ is such that $\pi_*M \cong I$, then $M \simeq T_R(I)$.	
\end{lem}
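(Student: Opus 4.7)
The plan is to apply the defining universal property \eqref{eq:liftdef} to produce a candidate map and then check it is an equivalence by a homotopy group calculation.

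Concretely, fix an isomorphism $\phi\colon \pi_*M \xrightarrow{\sim} I$ of graded $\pi_*R$-modules. By \eqref{eq:liftdef} applied to the given $M$, there is an isomorphism
\[
\pi_0\Hom_R(M,T_R(I)) \cong \Hom^0_{\pi_*R}(\pi_*M,I),
\]
so $\phi$ is the image of a (homotopy class of a) map $\tilde\phi\colon M \to T_R(I)$ in $\Mod_R$. It now suffices to show $\tilde\phi$ is an equivalence, i.e.\ that $\pi_*\tilde\phi$ is an isomorphism of graded abelian groups, because a map of $R$-modules whose underlying map of spectra induces isomorphisms on homotopy is an equivalence.

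To identify $\pi_*\tilde\phi$, I would use the fact that the isomorphism \eqref{eq:liftdef}, which comes from representability of $\Hom_{\pi_*R}(\pi_*(-),I)$ by $T_R(I)$, is natural in $M$ and sends a class $f\colon M \to T_R(I)$ to the composite $\pi_*M \xrightarrow{\pi_*f} \pi_*T_R(I) \xrightarrow{\sim} I$, where the second arrow is the canonical isomorphism of \Cref{lem:cohom}. By construction this composite equals $\phi$ when $f=\tilde\phi$, and since $\phi$ and $\pi_*T_R(I)\xrightarrow{\sim}I$ are both isomorphisms, so is $\pi_*\tilde\phi$. Hence $\tilde\phi$ is an equivalence in $\Mod_R$, and therefore $M \simeq T_R(I)$.

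There is essentially no obstacle here, since the statement is a direct unwinding of the representability property \eqref{eq:liftdef} together with \Cref{lem:cohom}; the only mild point requiring care is the identification of the isomorphism in \eqref{eq:liftdef} with post-composition with the isomorphism $\pi_*T_R(I)\cong I$, which follows from the fact that \eqref{eq:liftdef} is natural in $M$ and reduces, by testing on $M=R$, to the identification of \Cref{lem:cohom}.
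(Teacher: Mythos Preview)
Your proof is correct and follows essentially the same approach as the paper: lift the isomorphism $\pi_*M\cong I$ to a map $M\to T_R(I)$ via \eqref{eq:liftdef} and observe that it induces an isomorphism on homotopy. The paper's proof is a single sentence leaving implicit the check that the lifted map is a $\pi_*$-isomorphism, whereas you have spelled out this verification via naturality and \Cref{lem:cohom}; this added detail is accurate and the only difference.
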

\begin{proof}
	The equivalence $\pi_*M \cong I$ lifts to a morphism $M \to T_R(I)$ by \eqref{eq:liftdef}, which is an isomorphism on homotopy. 
\end{proof}

For the following, let $I_{\frak p}$ denote the injective hull of $\pi_*R/ \frak p$. 
\begin{lem}\label{lem:lifttorlocal}
	For each $\frak p \in \Spec \pi_*R$, we have $T_R(I_{\frak p}) \in \Mod_{R_{\frak p}}^{\frak p-\text{tors}}$. 
	\end{lem}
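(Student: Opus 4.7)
The plan is to reduce this to a purely algebraic statement about the injective hull $I_{\fp}$ and then invoke the classification already established in \Cref{thm::hpsclassification}. Recall that by \Cref{thm::hpsclassification}, an $R$-module $M$ lies in $\Mod_{R_{\fp}}^{\fp-\text{tors}}$ if and only if $\pi_*M$ is both $\fp$-local and $\fp$-torsion as a $\pi_*R$-module. So the strategy is to verify these two conditions for $\pi_*T_R(I_{\fp})$.

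The first step is to compute the homotopy groups: by \Cref{lem:cohom} one has a natural isomorphism $\pi_*T_R(I_{\fp}) \cong I_{\fp}$. Thus it suffices to check that $I_{\fp}$, the injective hull of $(\pi_*R)/\fp$, is $\fp$-local and $\fp$-torsion in the category of graded $\pi_*R$-modules. Both are classical facts from (graded) commutative algebra over the Noetherian ring $\pi_*R$, but I would include a brief justification for completeness.

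For $\fp$-torsion, one uses that $\pi_*R$ is Noetherian so $I_{\fp}$ decomposes as the union of its submodules $\Hom_{\pi_*R}((\pi_*R)/\fp^n, I_{\fp})$; equivalently, every element of $I_{\fp}$ is annihilated by some power of $\fp$, since the associated primes of $I_{\fp}$ reduce to $\{\fp\}$. For $\fp$-locality, one checks that multiplication by any homogeneous $s \in \pi_*R \setminus \fp$ acts as an automorphism on $I_{\fp}$: injectivity follows because $s$ is a non-zero divisor on $(\pi_*R)/\fp$ and hence on its injective hull (any kernel would be an essential submodule disjoint from $(\pi_*R)/\fp$, contradicting essentiality), while surjectivity follows from injectivity of $I_{\fp}$ combined with the fact that multiplication by $s$ extends to a self-map along the essential inclusion $(\pi_*R)/\fp \hookrightarrow I_{\fp}$.

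The only potential subtlety is the graded setting, but since we are assuming throughout that $\pi_*R$ is a graded-commutative Noetherian ring and all ideals are homogeneous, the standard ungraded arguments carry over verbatim. There is no real obstacle here; the lemma is essentially a bookkeeping step that links the algebraic notion of injective hull to the topological $\fp$-torsion subcategory via the Brown representability functor $T_R$ and the classification theorem.
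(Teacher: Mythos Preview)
Your proof is correct and follows essentially the same route as the paper's: both compute $\pi_*T_R(I_\fp) \cong I_\fp$ via \Cref{lem:cohom}, verify that $I_\fp$ is $\fp$-local and $\fp$-torsion as a $\pi_*R$-module, and then invoke \Cref{thm::hpsclassification}. The only difference is that the paper cites references (Matsumura for torsion, \cite{benson_local_cohom_2008} for the localization formula $(I_\fp)_\fq = I_\fp$ or $0$) in place of the direct arguments you sketch; your surjectivity step for multiplication by $s \notin \fp$ is a touch compressed but is easily made precise using indecomposability of $I_\fp$.
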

\begin{proof}
	Note first that $\pi_*T_R(I_{\frak p}) \cong I_{\frak p}$ is both $\frak p$-local and $\frak p$-torsion. Indeed the torsion statement is proved in \cite[Ch.~18]{matsumura}, while it is easy to see (for example, \cite[Lem.~2.1]{benson_local_cohom_2008}) that 
	\[
(I_\fp)_{\frak q} = \begin{cases}
I_\fp \quad &\text{if } \frak q \in \cV(\frak p), \\
0 & \text {otherwise.}
\end{cases}
	\]
The result then follows from \Cref{thm::hpsclassification}. 
\end{proof}
For the following recall that a graded ring is called local if it has a unique maximal homogeneous ideal. 
\begin{defn}
We say that a ring spectrum $R$ with $\pi_*R$ local Noetherian of dimension $n$ is algebraically Gorenstein of shift $\nu$ if $\pi_*R$ is a graded Gorenstein ring; that is, the local cohomology $H_{\frak m}^i(\pi_*R)$ is non-zero only when $i = n$ and $(H^n_{\frak m}(\pi_*R))_t \cong (I_{\frak m})_{t-\nu-n}$. If $\pi_*R$ is non-local, then it is algebraically Gorenstein of shift $\nu$ if its localization at each maximal ideal is algebraically Gorenstein of shift $\nu$ in the above sense.
\end{defn}

 \begin{lem}
  	Let $p \in \Spec \pi_*R$ be an ideal of dimension $d$ (i.e., $d$ is the Krull dimension of $\pi_*R/\frak p$). If $R$ is algebraically Gorenstein with shift $\nu$, then $R_{\fp}$ is algebraically Gorenstein of shift $\nu+d$. 
 \end{lem}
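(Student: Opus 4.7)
The plan is to reduce to the case of a local ring and invoke the classical fact that localization of a Gorenstein local ring at a prime is again Gorenstein, while tracking the shift by means of the canonical module.

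First I would reduce to the case that $\pi_*R$ is graded local. Since the statement of the lemma depends only on $(\pi_*R)_\fp$, one may choose any maximal ideal $\m \supseteq \fp$ realizing $\dim(\pi_*R/\fp)_\m = d$ (such an $\m$ exists because $\dim \pi_*R/\fp = d$ is attained along some saturated chain ending at a maximal ideal) and replace $\pi_*R$ by $S := (\pi_*R)_\m$. By the non-local clause of the definition, $S$ is graded local Gorenstein of shift $\nu$; let $n = \dim S$, and retain the notation $\fp$ for the image of $\fp$ in $S$, so that $\dim S/\fp = d$.

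Since $S$ is graded Cohen--Macaulay (as any graded Gorenstein ring is), the dimension formula yields $\dim S_\fp = n - d$. The classical theorem that a localization of a Gorenstein local ring at any prime is again Gorenstein goes through verbatim in the graded setting, so $S_\fp$ is graded local Gorenstein of dimension $n-d$, and in particular $H^i_{\fp S_\fp}(S_\fp)$ vanishes for $i \neq n-d$. It remains only to identify the shift.

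For the shift, I would pass to the graded canonical module $\omega_S$. From $(H^n_\m S)_t \cong (I_\m)_{t - \nu - n}$ together with graded Matlis duality $\omega_S \cong \Hom_S(H^n_\m(S), I_\m)$, one reads off $\omega_S \cong S(\nu + n)$, i.e.\ the canonical module is $S$ shifted by $\nu + n$. Formation of the canonical module commutes with localization at primes, so $\omega_{S_\fp} \cong S_\fp(\nu + n)$. On the other hand, $S_\fp$ being Gorenstein of shift $\nu'$ amounts to $\omega_{S_\fp} \cong S_\fp(\nu' + (n - d))$, and comparing the two forces $\nu' = \nu + d$, as desired.

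The main obstacle is pure bookkeeping: matching the paper's sign conventions for graded shifts, and confirming that the classical ungraded inputs—Cohen--Macaulayness of Gorenstein rings, graded Matlis duality, and the compatibility of the canonical module with localization—all transfer verbatim to the graded Noetherian setting. Once the canonical module is on the table, the shift computation collapses to a one-line comparison.
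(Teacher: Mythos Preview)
Your proof is correct and follows essentially the same arc as the paper's: reduce to the local case, invoke the classical fact that localization of a Gorenstein local ring is Gorenstein (the paper cites Matsumura), and then track the shift. The only difference is in the last step: where the paper appeals to the dual-localization argument of Greenlees--Lyubeznik to pass the isomorphism $(H^n_{\m}(\pi_*R))_t \cong (I_{\m})_{t-\nu-n}$ down to $\fp$, you instead package the same Matlis-duality computation in terms of the canonical module $\omega_S \cong S(\nu+n)$ and its compatibility with localization. These are the same calculation in different clothing---applying $D_\fp \circ (-)_\fp$ to $\omega_S = D_\m(H^n_\m S)$ is exactly the dual-localization functor $\mathcal L_\fp$ evaluated on $H^n_\m S$---so your argument is in effect an explicit unwinding of the reference the paper cites.
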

\begin{proof}
	That $R_{\fp}$ is Gorenstein is well-known, see, for example, \cite[Thm.~18.2]{matsumura}. Thus we just need to calculate the shift.  First, note that $R_{\fp}$ has dimension $n-d$. By assumption, we have
\[
(H^n_{\frak m}(\pi_*R))_t \cong (I_{\frak m})_{t-\nu-n}
\]
and by the same argument in the proof of Lem.~7.1 of \cite{green_lyub} we get
\[
(H^{n-d}_{\frak p}(\pi_*R)_{\fp})_t \cong (I_{\fp})_{t-(\nu+d)-(n-d)}.\qedhere 
\]
\end{proof}
\begin{prop}\label{prop:grosshopkinsgorenstein}
	 Suppose $R$ is algebraically Gorenstein of shift $\nu$, and suppose $\frak \p \in \Spec \pi_*R$ has dimension $d$. Then, there is an equivalence $\Gamma_{\frak p}R \simeq \Sigma^{\nu+d}T_R(I_{\frak p})$. 
\end{prop}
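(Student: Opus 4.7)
The plan is to compute $\pi_*\Gamma_{\fp} R$ via the strongly convergent spectral sequence of \Cref{prop:homotopy_ss_local}(1), verify the answer matches $\pi_*\Sigma^{\nu+d}T_R(I_{\fp})$ as graded $\pi_*R$-modules, and then upgrade the abstract isomorphism of homotopy groups to a genuine equivalence of $R$-modules using the universal property of the functor $T_R$.

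First, combining the preceding lemma with the Gorenstein hypothesis, $R_{\fp}$ is algebraically Gorenstein of shift $\nu+d$ with $\pi_*R_{\fp}$ local of Krull dimension $n-d$, where $n = \dim \pi_*R$. Consequently $H^i_{\fp}(\pi_*R_{\fp}) = 0$ for $i \neq n-d$, and in the critical degree
\[
(H^{n-d}_{\fp}(\pi_*R_{\fp}))_t \cong (I_{\fp})_{t-\nu-n}.
\]
Now apply \Cref{prop:homotopy_ss_local}(1) with $N = R$: the $E_2$-page is concentrated in the single column $s = d-n$, so the differentials $d^r\colon E^r_{s,t}\to E^r_{s-r,t+r-1}$ all vanish for bidegree reasons and there are no filtration issues. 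Reading off the column and re-indexing with $k = (d-n)+t$, one obtains
\[
\pi_k(\Gamma_{\fp} R) \cong (I_{\fp})_{k-\nu-d}
\]
as graded $\pi_*R$-modules.

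On the other hand, by \Cref{lem:cohom},
\[
\pi_k(\Sigma^{\nu+d}T_R(I_{\fp})) \cong \pi_{k-\nu-d}T_R(I_{\fp}) \cong (I_{\fp})_{k-\nu-d},
\]
so the two $R$-modules have abstractly isomorphic homotopy. Both lie in $\Mod_{R_{\fp}}^{\fp\text{-tors}}$: the left-hand side by \Cref{lem:lifttorlocal} and the right-hand side by construction. To promote the isomorphism of homotopy groups to an equivalence, apply \Cref{lem:homotopycheck} to the $R$-module $M = \Sigma^{-\nu-d}\Gamma_{\fp}R$, whose homotopy is identified with the injective hull $I_{\fp}$; this yields $M \simeq T_R(I_{\fp})$, equivalently $\Gamma_{\fp}R \simeq \Sigma^{\nu+d}T_R(I_{\fp})$, as required.

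The only delicate aspect I anticipate is the bookkeeping of the four grading shifts involved — the cohomological degree $n-d$ contributing an internal shift via the abutment, the Gorenstein shift $\nu+d$ of $R_{\fp}$, the suspension $\Sigma^{\nu+d}$ on the right-hand side, and the fact that $T_R$ preserves internal grading rather than absorbing it — and making sure these combine correctly. No conceptual obstacle remains, since the spectral sequence collapses at $E_2$ for purely cohomological-dimension reasons, and \Cref{lem:homotopycheck} is tailor-made to convert a $\pi_*$-computation into an equivalence for modules whose homotopy is injective.
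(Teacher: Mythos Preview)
Your proposal is correct and follows essentially the same approach as the paper: run the spectral sequence of \Cref{prop:homotopy_ss_local}(1), use the preceding lemma to see it collapses at the single cohomological degree $n-d$, read off $\pi_*\Gamma_{\fp}R\cong I_{\fp}$ (suitably shifted), and invoke \Cref{lem:homotopycheck}. One minor slip: in your sentence about both sides lying in $\Mod_{R_{\fp}}^{\fp\text{-tors}}$ you have the justifications swapped (\Cref{lem:lifttorlocal} applies to $T_R(I_{\fp})$, while $\Gamma_{\fp}R$ is torsion by construction), but this observation is in any case unnecessary for applying \Cref{lem:homotopycheck}.
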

\begin{proof}
		The spectral sequence \Cref{prop:homotopy_ss_local}(1) computing $\Gamma_{\frak p}R$ takes the form
	\[
(H^{s}_{\frak p}(\pi_*R)_{\frak p})_{t+s} \implies \pi_{t}(\Gamma_{\frak p}R),
	\]
	with $(H^{\ast}_{\frak p}(\pi_*R)_{\frak p})_{t+n-d} \cong (I_{\frak p})_{t-\nu-d}$ when $\ast = n-d$ and is 0 otherwise. Hence the spectral sequence collapses to show that $(I_{\frak p})_{t-\nu-d} \cong \pi_{t}(\Gamma_{\frak p}R)$.
	 The result follows from \Cref{lem:homotopycheck}.
\end{proof}
\begin{exmp}\label{exmp:Gorenstein}
	\begin{enumerate}
		\item Suppose $G$ is a finite group and $H^{-\ast}(G,k) \cong \pi_*C^*(B,k)$ is a Gorenstein ring. Then $C^*(BG,k)$ is algebraically Gorenstein of shift $0$. This follows from the collapsing of Greenlees' spectral sequence \cite[Thm.~2.1]{green_1995}
		\[
(H_{\frak m}^{s}H^*(G,k))_{-t} \implies (I_{\fp})_{t-s}.
		\]
		\item Suppose $G$ is a compact Lie group of dimension $w$, such that $H^*(G,k)$ is Gorenstein, and the adjoint representation of $G$ is orientable over $k$. Then, using the Benson--Greenlees spectral sequence \cite[Cor.~5.2]{ben_green_ss}, we see $C^*(BG,k)$ is algebraically Gorenstein of shift $w$. 

	\end{enumerate}
\end{exmp}

Suppose that $f \colon R \to S$ is a morphism of ring spectra. Recall that this gives rise to adjoint pairs $(f_*,f^*)$ and $(f^*,f_!)$, where $f^* \colon \Mod_S \to \Mod_R$ is restriction of scalars, $f_* \colon \Mod_R \to \Mod_S, M \mapsto S \otimes_R M$ is extension of scalars, and $f_! \colon \Mod_R \to \Mod_S, M \mapsto \Hom_R(S,M)$ is coinduction.

 Let $r_{R,S}(I)= \Hom_{\pi_*R}(\pi_*S,I)$ for each injective $\pi_*R$-module $I$; note that $r_{R,S}(I)$ is an injective $\pi_*S$-module. 
The following is the analog of \cite[Prop.~7.1]{bk_pureinjectives}, except here the role of restriction is played by coinduction.  
\begin{prop}\label{prop:liftsplit}
	Let $f \colon R \to S$ be a morphism of ring spectra. If $I$ is an injective $\pi_*R$-module, then there is a weak equivalence $f_!T_R(I) \simeq T_S(r_{R,S}(I))$. 
	\end{prop}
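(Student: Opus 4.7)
The plan is to apply \Cref{lem:homotopycheck} directly: it suffices to show that $\pi_* f_! T_R(I)$ is isomorphic to $r_{R,S}(I)$ as a $\pi_*S$-module, since $r_{R,S}(I)$ is injective over $\pi_*S$ (as recalled in the paragraph preceding the statement). Unwinding the definition of coinduction, $f_! T_R(I) = \Hom_R(S, T_R(I))$, so taking homotopy and applying the defining universal property of $T_R$ recorded in \eqref{eq:liftdef} with $M = f^*S$ gives
\[
\pi_* f_! T_R(I) \;=\; \pi_* \Hom_R(S, T_R(I)) \;\cong\; \Hom_{\pi_*R}(\pi_*S, I) \;=\; r_{R,S}(I).
\]
An appeal to \Cref{lem:homotopycheck} then produces the desired equivalence $f_! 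T_R(I) \simeq T_S(r_{R,S}(I))$.

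The main obstacle is subtle but important: one must check that the identification on homotopy is compatible with the relevant $\pi_*S$-module structure, so that the conclusion is an equivalence in $\Mod_S$ and not merely of underlying spectra (the formulation of \Cref{lem:homotopycheck} implicitly requires this). To handle this cleanly, I would in fact verify the stronger statement that $f_! T_R(I)$ satisfies the universal property defining $T_S(r_{R,S}(I))$. Namely, for any $N \in \Mod_S$, the $(f^*, f_!)$-adjunction together with \eqref{eq:liftdef} applied to $f^*N$ yields
\[
\pi_* \Hom_S(N, f_! T_R(I)) \;\cong\; \pi_* \Hom_R(f^*N, T_R(I)) \;\cong\; \Hom_{\pi_*R}(\pi_*N, I),
\]
and a final application of the standard tensor-hom adjunction along the ring map $\pi_*R \to \pi_*S$ identifies the right-hand side with $\Hom_{\pi_*S}(\pi_*N, \Hom_{\pi_*R}(\pi_*S, I)) = \Hom_{\pi_*S}(\pi_*N, r_{R,S}(I))$. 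Thus $f_!T_R(I)$ corepresents the same functor on $\Mod_S$ as $T_S(r_{R,S}(I))$, and the equivalence follows from the Yoneda lemma in the homotopy category of $S$-modules, bypassing any module-structure issues.
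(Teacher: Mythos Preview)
Your proposal is correct and follows essentially the same route as the paper: the paper also verifies the universal property $\pi_*\Hom_S(M,f_!T_R(I)) \cong \Hom_{\pi_*S}(\pi_*M,r_{R,S}(I))$ for arbitrary $M \in \Mod_S$ via the $(f^*,f_!)$-adjunction, \eqref{eq:liftdef}, and the tensor--hom adjunction along $\pi_*R \to \pi_*S$. The only cosmetic difference is the last step, where the paper specializes to $M=S$ and invokes \Cref{lem:homotopycheck}, whereas you appeal directly to Yoneda.
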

\begin{proof}
	Let $M \in \Mod_S$. There are equivalences
	\[
\begin{split}
	\pi_*\Hom_S(M,f_!T_R(I)) &\cong \pi_* \Hom_R(f^*M,T_R(I))\\
						& \cong \Hom_{\pi_*R}(\pi_*(f^*M),I) \\
						& \cong \Hom_{\pi_*S}(\pi_*M,r_{R,S}(I))
\end{split}
	\]
	where the last step follows from adjunction and the observation that $\pi_*(f^*M) \cong \pi_*\Hom_{R}(R,f^*M) \cong \pi_*\Hom_S(S,M) \cong \pi_*M$. Since $r_{R,S}(I)$ is an injective $\pi_*S$-module we have
	\[
\Hom_{\pi_*S}(\pi_*M,r_{R,S}(I)) \cong \pi_*\Hom_S(M,T_S(r_{R,S}(I))). 
	\]
Taking $M = S$ and applying \Cref{lem:homotopycheck} gives the result. 
\end{proof}
In \cite{green_lyub} Greenlees and Lyubeznik introduced a way to localize local cohomology modules at some non-maximal prime ideal $\fp$; note that $H_{\frak m}^*$ is always $\frak m$-torsion, and so the naive approach of directly localizing at $\frak p$ does not work. The approach of Greenlees and Lyubeznik is to first dualize, then localize, and then dualize again. We assume for simplicity that $\pi_*R$ is a graded local ring with maximal ideal $\frak m$.
\begin{defn}
	Let $\frak p$ be a homogeneous ideal in $\Spec(\pi_*R)$, and suppose $M$ is a $(\pi_*R)_{\frak p}$-module. Let $I_{\fp }$ denote the injective hull of $\pi_*R/\fp$. The Matlis dual $D_{\frak p}M$ is defined by
	\[
D_{\frak p}M = \Hom_{(\pi_*R)_{\frak p}}(M,I_{\frak p}).
	\]
\end{defn}
Greenlees and Lyubenzik introduced a functor, called dual localization and denoted $\cal{L}_{\frak p} \colon \Mod_{\pi_*R} \to \Mod_{(\pi_*R)_{\frak p}}$, which is defined as the composite
\[
\Mod_{\pi_*R} \xr{D_{\frak m}} \Mod^{\text{op}}_{\pi_*R} \xr{(-)_{\frak p}} \Mod^{\text{op}}_{(\pi_*R)_{\frak p}} \xr{D_{\frak p}} \Mod_{(\pi_*R)_{\frak p}.}
\]
For example, if $\frak p$ has dimension $d$ and $M$ is a finitely-generated $\pi_*R$-module, then by \cite[Lem.~2.5]{green_lyub} 
\begin{equation}\label{eq:matlisdual}
\cal{L}_{\frak p}H_{\frak m}^i(M) \cong H_{\frak p}^{i-d}(M_{\frak p}). 
\end{equation}

There is an obvious way to define a lift of this functor to topology. First, we start with the analog of the Matlis dual of a module. 
\begin{defn}
	We define the Brown--Comenetz dual $\mathbb{D}_{T_R(I)} \colon \Mod_R \to \Mod_R^{\text{op}}$ by setting $\mathbb{D}_{T_R(I)}(M) = \Hom_R(M,T_R(I))$, which has a natural structure of an $R$-module. For ease of notation we will write $\mathbb{D}_{\frak p}$ for $\mathbb{D}_{T_R(I_{\frak p})}$. 
\end{defn}

We can then define a functor $\mathbb{L}_{\frak p} \colon \Mod_R \to \Mod_{R_{\frak p}}$ as the composite
\[
\Mod_{R} \xr{\matlis_{\frak m}} \Mod^{\text{op}}_{R} \xr{L_{\frak p}} \Mod^{\text{op}}_{R_{\frak p}} \xr{\matlis_{\frak p}} \Mod_{R_{\fp}.}
\]
\begin{lem}\label{lem:dualloc}
For any $M \in \Mod_R$ there is an isomorphism $\cal{L}_{\frak p}\pi_*M \cong \pi_*(\mathbb{L}_{\frak p}M)$.

\end{lem}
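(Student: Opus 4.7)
The plan is to unwind the composite defining $\mathbb{L}_{\frak p}$ one factor at a time, applying the universal property \eqref{eq:liftdef} of $T_R(-)$ and the identification of $\pi_*$ with algebraic localization from \Cref{prop:hpsplocal}. Nothing deeper should be needed, because each of the three functors in $\mathbb{L}_{\frak p} = \matlis_{\frak p} \circ L_{\frak p} \circ \matlis_{\frak m}$ has a clean description on homotopy groups matching the corresponding factor of $\cal{L}_{\frak p} = D_{\frak p} \circ (-)_{\frak p} \circ D_{\frak m}$.

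In detail, I would proceed as follows. First, by \eqref{eq:liftdef} applied to $I = I_{\frak m}$ there is a natural isomorphism
\[
\pi_* \matlis_{\frak m} M \;=\; \pi_*\Hom_R(M, T_R(I_{\frak m})) \;\cong\; \Hom_{\pi_*R}(\pi_* M, I_{\frak m}) \;=\; D_{\frak m}\pi_*M
\]
of graded $\pi_*R$-modules. Next, \Cref{prop:hpsplocal} gives a natural isomorphism
\[
\pi_*\bigl(L_{\frak p}\matlis_{\frak m}M\bigr) \;\cong\; \bigl(\pi_*\matlis_{\frak m}M\bigr)_{\frak p} \;\cong\; (D_{\frak m}\pi_*M)_{\frak p}
\]
of $(\pi_*R)_{\frak p}$-modules. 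Finally, since $I_{\frak p}$ is $\frak p$-local, a second application of \eqref{eq:liftdef} (now to $I = I_{\frak p}$) yields
\[
\pi_*\bigl(\matlis_{\frak p}(L_{\frak p}\matlis_{\frak m}M)\bigr) \;\cong\; \Hom_{\pi_*R}\bigl(\pi_*L_{\frak p}\matlis_{\frak m}M,\, I_{\frak p}\bigr) \;\cong\; D_{\frak p}\bigl((D_{\frak m}\pi_*M)_{\frak p}\bigr) \;=\; \cal{L}_{\frak p}\pi_*M,
\]
which is the required isomorphism.

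The only minor subtlety to verify is that the $\pi_*R$-module (resp.\ $(\pi_*R)_{\frak p}$-module) structures match at each stage, but this is automatic from the naturality of \eqref{eq:liftdef} in $M$ and the module structure supplied by \Cref{lem:cohom}; in particular one uses that the $R$-module structure on $\Hom_R(-, T_R(I))$ is the one coming from the target. I do not anticipate a genuine obstacle here, and no explicit model-categorical or resolution arguments are needed beyond the two general facts cited above.
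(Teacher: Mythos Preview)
Your proof is correct and follows essentially the same approach as the paper: the paper's proof simply records the commutativity of the three squares relating $(\matlis_{\frak m}, L_{\frak p}, \matlis_{\frak p})$ to $(D_{\frak m}, (-)_{\frak p}, D_{\frak p})$ under $\pi_*$, citing \eqref{eq:liftdef} and \Cref{prop:hpsplocal} as the only inputs. You have written out exactly this verification step by step, including the minor point that $\Hom_{\pi_*R}(-,I_{\frak p})$ agrees with $D_{\frak p}=\Hom_{(\pi_*R)_{\frak p}}(-,I_{\frak p})$ on $\frak p$-local modules.
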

\begin{proof}
This follows from commutativity of the diagram 
	\[
\xymatrix@C=35pt{
\Mod_{R} \ar[r]^-{\matlis_{{\frak m}}} \ar[d]_{\pi_*}& \ar[d]_{\pi_*} \Mod^{\text{op}}_{R} \ar[r]^-{L_{\frak p}}& \Mod^{\text{op}}_{R_{\frak p}} \ar[d]_{\pi_*} \ar[r]^-{\matlis_{{\frak p}}} & \Mod_{R_{\fp}} \ar[d]_{\pi_*}\\
\Mod_{\pi_*R} \ar[r]^-{D_{\frak m}} & \Mod^{\text{op}}_{\pi_*R} \ar[r]^-{(-)_{\frak p}}& \Mod^{\text{op}}_{(\pi_*R)_{\frak p}} \ar[r]^-{D_{\frak p}} & \Mod_{(\pi_*R)_{\frak p}}
}
	\]
	which is an easy consequence of \eqref{eq:liftdef} and \Cref{prop:hpsplocal}. 
\end{proof}

\begin{prop}
	Suppose that $\pi_*R$ is such that $\Gamma_{\frak m}R \simeq \Sigma^{\nu}T_R(I_{\frak m})$. Then for each $\fp \in \Spec \pi_*R$ there is a strongly convergent spectral sequence
	\begin{equation}\label{eq:ss2}
(H_{\frak p}^{s}(\pi_*R)_{\frak p})_{t+s} \implies (I_{\frak p})_{t-\nu-d} . 
\end{equation}
	\end{prop}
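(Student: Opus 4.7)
My plan is to obtain the asserted spectral sequence by applying the Greenlees--May spectral sequence of \Cref{prop:homotopy_ss}(1) to $R$ itself at the maximal ideal $\m$:
\[
E^2_{s,t} = (H^{-s}_{\m}(\pi_*R))_t \implies \pi_{s+t}(\Gamma_{\m}R),
\]
and then to use the hypothesis together with \Cref{lem:cohom} to identify the abutment as $\pi_{s+t}(\Sigma^\nu T_R(I_{\m})) \cong (I_{\m})_{s+t-\nu}$. In other words, the hypothesis converts the Greenlees--May spectral sequence at $\m$ into a strongly convergent spectral sequence of the form $(H^{-s}_{\m}(\pi_*R))_t \implies (I_{\m})_{s+t-\nu}$.

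Next, I would transport this spectral sequence to the prime $\fp$ by applying the Greenlees--Lyubeznik dual localization $\cal L_{\fp} = D_{\fp} \circ (-)_{\fp} \circ D_{\m}$. This functor is exact, being the composite of two exact Matlis dualities (which are exact because $I_{\m}$ and $I_{\fp}$ are injective) and one exact localization, so it transforms strongly convergent spectral sequences into strongly convergent spectral sequences. On the $E^2$-page, the formula \eqref{eq:matlisdual} gives $\cal L_{\fp}(H^{-s}_{\m}(\pi_*R)) \cong H^{-s-d}_{\fp}((\pi_*R)_{\fp})$, while on the abutment a standard Matlis duality computation identifies $\cal L_{\fp}(I_{\m}) \cong I_{\fp}$ (via the equivalences $D_{\m}(I_{\m}) \cong \widehat{(\pi_*R)}_{\m}$ and $D_{\fp}((\widehat{(\pi_*R)}_{\m})_{\fp}) \cong I_{\fp}$). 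After reindexing the cohomological degree by $s \mapsto s - d$ and adjusting the internal grading accordingly, the resulting spectral sequence takes the stated form $(H^{s}_{\fp}((\pi_*R)_{\fp}))_{t+s} \implies (I_{\fp})_{t-\nu-d}$.

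The main obstacle will be the careful book-keeping of all degree shifts involved, since the two applications of Matlis duality reverse internal degree while localization preserves it, and the cohomological degree is shifted by $d$ in passing from $\m$ to $\fp$. A conceptually cleaner variant (which trades the SS-level manipulation for a spectrum-level one) is to first upgrade the hypothesis to an $R$-module equivalence $\Gamma_{\fp}R \simeq \Sigma^{\nu+d}T_R(I_{\fp})$ using \Cref{lem:dualloc} to compute $\pi_*\Gamma_{\fp}R$ from $\pi_*\Gamma_{\m}R \cong \Sigma^\nu I_{\m}$, then applying \Cref{lem:lifttorlocal} and \Cref{lem:homotopycheck}, and finally invoking \Cref{prop:homotopy_ss_local}(1) at $\fp$ with this identification of the abutment in hand. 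Either route encodes the same content, with the two formulations related by the Greenlees--Lyubeznik shift of $d$.
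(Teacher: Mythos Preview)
Your main approach is essentially the same as the paper's: start from the Greenlees--May spectral sequence at $\m$, identify the abutment via the hypothesis, and then apply Greenlees--Lyubeznik dual localization $\cal L_{\fp}$ to transport everything to $\fp$, using \eqref{eq:matlisdual} on the $E^2$-page. The only cosmetic difference is in how the abutment is handled after applying $\cal L_{\fp}$: the paper invokes \Cref{lem:dualloc} to rewrite the abutment as $\pi_*(\mathbb{L}_{\fp}\Gamma_{\m}R)$ and then computes this spectrum-level expression via $\mathbb{D}^2_{\m}R$, whereas you compute $\cal L_{\fp}(I_{\m})$ directly on the algebraic side via $D_{\m}(I_{\m})\cong\widehat{(\pi_*R)}_{\m}$. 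Both computations encode the same content (indeed \Cref{lem:dualloc} is exactly the dictionary between them), so your proof is correct and matches the paper's route; your ``conceptually cleaner variant'' is not what the paper does, and would require the extra identification $\mathbb L_{\fp}\Gamma_{\m}R\simeq\Gamma_{\fp}R$, which is more than \Cref{lem:dualloc} gives directly.
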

	\begin{proof}
Since $\pi_*R$ is local we have $\Gamma_{\frak m} \simeq \Gamma_{\cV(\frak m)}$. Recall the spectral sequence of \Cref{prop:homotopy_ss}
\[
(H_{\frak m}^{s}\pi_*R)_{t+s} \implies \pi_{t}(\Gamma_{\cV(\frak m)}R).
\]
Applying $\cal{L}_{\frak p}$ to this spectral sequence and using \Cref{lem:dualloc} and \eqref{eq:matlisdual} we get a spectral sequence 
\[
(H_{\frak p}^{s-d}((\pi_*R)_{\frak p}))_t \implies \pi_{t}(\mathbb{L}_{\frak p}\Gamma_{\cV(\frak m)}R). 
\]
Note that $\Gamma_{\cV(\frak m)}R \simeq \Sigma^{\nu}T_R(I_{\frak m}) \simeq \Sigma^{\nu}\mathbb{D}_{\frak m}R$. Then we have
\[
\pi_{t}(\mathbb{L}_{\frak p}\Gamma_{\cV(\frak m)}R) \cong \pi_{t-\nu}(\mathbb{L}_{\fp}\mathbb D_{\frak m}R) \cong \pi_{t-\nu}(\mathbb{D}_{\frak p}L_{\fp}(\mathbb{D}^2_{\frak m}R))  \cong \pi_{t-\nu}(\mathbb{D}_{\frak p}(R_{\fp})), 
\]
which is isomorphic to $\pi_{t-\nu}(T_{R_{\fp}}(I_{\fp})) \cong (I_{\fp})_{t-\nu}$. Reindexing we see that the spectral sequence is as claimed. 
 \end{proof}
\begin{rem}
If this spectral sequence is isomorphic to that of \Cref{prop:homotopy_ss_local}(1), then using \Cref{lem:homotopycheck} there would be an equivalence $\Gamma_{\frak p}R \simeq \Sigma^{\nu+d} T_R(I_{\frak p})$.
\end{rem} 

\begin{rem}(Dwyer--Greenlees--Iyengar duality)\label{rem:bg}
Suppose that $k$ is a field and $R$ is a coconnective commutative augmented $k$-algebra, with $\pi_*R$ Noetherian, and an isomorphism $\pi_0R \cong k$. Assume additionally that $R \to k$ is Gorenstein of shift $a$ in the sense of \cite[Sec.~8]{dgi_duality}.\footnote{Here we identify $k$ with $Hk$.} Note that by \cite[Rem.~17.1]{greenlees_hi} an algebraically Gorenstein ring spectrum gives rise to a Gorenstein ring spectrum, but the converse need not hold, as the example $R = C^*(BG,k)$ for $G$ a compact Lie group shows. Let $\epsilon = \Hom_R(k,k)$, and assume that $k$ has a unique $\epsilon$-lift \cite[Def.~6.6]{dgi_duality}. Then \cite[Proposition 9.4]{dgi_duality} gives a spectral sequence
\[
(H_{\frak m}^{-s}(\pi_*R))_{t+s} \implies \Hom_k(\pi_{t-a}R	,k) \cong (I_{\frak m})_{t-a}.
\]
Applying Greenlees--Lyubenzik dual localization we get a localized spectral sequence 
\begin{equation}\label{eq:digss}
(H_{\frak p}^{-s}(\pi_*R)_{\frak p})_{t+s} \implies (I_{\frak p})_{t-d-a}.
\end{equation}
Once again, if this spectral sequence were isomorphic to that of \Cref{prop:homotopy_ss_local}(1), then there would be an equivalence $\Gamma_{\frak p}R \simeq \Sigma^{a+d} T_R(I_{\frak p})$.
For example, $R = C^*(BG,k) \to k$ for $G$ a compact Lie group of dimension $w$ is Gorenstein of shift $w$. In this case, Benson and Greenlees show \cite[Thm.~12.1]{bg_localduality} (at least for Lie groups satisfying a mild orientability condition) that $\Gamma_{\frak p}R \simeq \Sigma^{w-d}T_R(I_{\frak p})$. We will give an alternative proof in \Cref{prop:bcliegroup}, and show that $\Gamma_{\frak p}R \simeq \Sigma^{w+d}T_R(I_{\frak p})$. That the sign on $d$ is wrong has also been noted in \cite[Sec.~5]{bikp_modular}. 
\end{rem}
\subsection{The local cohomology of the relative dualizing module}
Suppose we have a morphism $f \colon R \to S$ of ring spectra. As noted above, there always exists a triple $f_* \dashv f^* \dashv f_!$ of adjoint functors. Inspired by the dualizing complexes of algebraic geometry \cite{hartshorne_resduality,neeman_brown,lipman_gd}, Balmer, Dell'Ambrogio, and Sanders \cite{bal_gn_dual} made the following definition. 
\begin{defn}
	The object $\omega_f = f_!(R) = \Hom_R(S,R)$ is called the relative dualizing module.
\end{defn}
The purpose of this section is to calculate $\pi_*\Gamma_{\fp}\omega_f$, the local cohomology of the relative dualizing module, and relate this to work of Benson and Benson--Greenlees \cite{benson_shortproof,bg_localduality}. We start with a lemma which gives conditions for Grothendieck--Neeman duality to hold for $f$. 
\begin{lem}\label{lem:dualizingcomplex}
Suppose that $f\colon R \to S$ is a morphism of ring spectra such that $S$ is a dualizable $R$-module, then $f_*(-) \otimes \omega_f \cong f_!(-)$. Moreover if $f_!$ preserves compact objects, then $\omega_f$ is an invertible $S$-module. 
\end{lem}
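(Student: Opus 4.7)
The plan is to combine the projection formula with the dualizability hypothesis. For the first assertion, since $S$ is dualizable as an $R$-module, there is a canonical equivalence $\Hom_R(S, M) \simeq M \otimes_R S^{\vee}$ natural in $M \in \Mod_R$, where $S^{\vee} = \Hom_R(S, R)$. Restricting $\omega_f = f_!(R) = \Hom_R(S, R)$ along $f^*$ yields $S^{\vee}$, and this identifies $f_!(M) \simeq M \otimes_R \omega_f$ at the level of underlying $R$-modules. On the other hand, the projection formula for extension of scalars supplies $(M \otimes_R S) \otimes_S N \simeq M \otimes_R N$ naturally in $M \in \Mod_R$ and $N \in \Mod_S$; setting $N = \omega_f$ produces the same expression. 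To upgrade this to an equivalence of $S$-modules, I would construct a canonical comparison map $f_*(M) \otimes_S \omega_f \to f_!(M)$ out of the adjunction data and then check it on the compact generator $M = R$, where both sides reduce to $\omega_f$. Both functors in $M$ preserve colimits: $f_*$ is a left adjoint, and $f_!$ preserves colimits precisely because its left adjoint $f^*$ preserves compact objects, which in turn follows from compactness of $S \in \Mod_R$ (a consequence of dualizability).

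For the second assertion, assume in addition that $f_!$ preserves compact objects. Then $\omega_f = f_!(R)$ is compact in $\Mod_S$, and in a module category over a commutative ring spectrum compact objects are dualizable. Writing $\omega_f^{\vee} = \Hom_S(\omega_f, S)$ for the $S$-dual, the standard identity $\omega_f \otimes_S \omega_f^{\vee} \simeq \End_S(\omega_f)$ for dualizable objects reduces invertibility of $\omega_f$ to an equivalence $\End_S(\omega_f) \simeq S$. I propose to compute this directly using the adjunction $f^* \dashv f_!$ as
\[
\End_S(\omega_f) \simeq \Hom_S(\omega_f, f_!R) \simeq \Hom_R(f^*\omega_f, R) \simeq \Hom_R(S^{\vee}, R) \simeq S^{\vee\vee} \simeq S,
\]
where the third equivalence uses that $f^*\omega_f \simeq S^{\vee}$ as $R$-modules, and the last step invokes the reflexivity $S \simeq S^{\vee\vee}$ provided by dualizability of $S$.

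The main obstacle in both parts is ensuring that the comparison maps are genuinely $S$-linear, and not merely equivalences of underlying $R$-modules. In the first part this requires carefully constructing the map from the adjoint triple $f_* \dashv f^* \dashv f_!$ so that the $S$-action on $\omega_f$ is correctly transported through the projection formula; in the second part, the chain of equivalences above must be tracked so as to respect the $S$-module structure on $\End_S(\omega_f)$, which arises via the source variable of $\Hom_R(S, R)$. Alternatively, since these identifications are formal consequences of the adjunction structure, one can invoke \cite{bal_gn_dual}, of which this lemma is essentially a special case in the setting of module categories over structured ring spectra.
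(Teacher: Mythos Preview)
Your proposal is correct. The paper's own proof consists entirely of citations to \cite{neeman_brown} and \cite[Thm.~3.3, Thm.~1.9]{bal_gn_dual}, which your final sentence already anticipates; your direct argument is essentially an unpacking of those results in the module-category setting. One refinement worth making explicit in the second part: what must be checked is not merely that $\End_S(\omega_f) \simeq S$ abstractly as $S$-modules, but that the \emph{unit map} $S \to \End_S(\omega_f)$ (sending $1$ to $\id_{\omega_f}$) is an equivalence, since that is what invertibility of a dualizable object requires. Tracing your chain, $\id_{\omega_f}$ corresponds under the $(f^*,f_!)$-adjunction to the counit $f^*\omega_f \to R$, which is evaluation at $1 \in S$, and this is precisely the image of $1$ under the double-dual map $S \xrightarrow{\sim} S^{\vee\vee}$; so the unit is indeed carried to the identity and the argument closes.
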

\begin{proof}
Since $S$ is a compact (and hence dualizable) $R$-module, $f_!$ preserves colimits, and so by \cite{neeman_brown} or \cite[Thm.~3.3]{bal_gn_dual} there is a natural isomorphism $f_!(-) \cong \omega_f \otimes f_*(-)$. If $f_!$ preserves compact objects then by \cite[Thm.~1.9]{bal_gn_dual} $\omega_f$ is an invertible $S$-module.\footnote{Note the unfortunate clash of notation; in the notation of \cite{bal_gn_dual} $f^\ast$ is the induction functor.}
\end{proof}

Based on the discussion above we make the following definition. 
\begin{defn}\label{def:bc}
	Let $R$ be a ring spectrum. We say that $R$ is \Gorenstein with shift $\nu$ if, for each $\frak p \in \Spec \pi_*R$ of dimension $d$, there is an equivalence $\Gamma_{\frak p}R \simeq \Sigma^{\nu+d} T_R(I_{\frak p})$. More generally, we say that $R$ is \Gorenstein with twist $J$ if there exists an $R$-module $J$ such that there is an equivalence 
\[
\Gamma_{\frak p}R \otimes_R J \simeq \Sigma^{d} T_R(I_{\frak p})
\]
for any $\frak p \in \Spec \pi_*R$ of dimension $d$.
\end{defn}
\begin{rem}
	We use the terminology \Gorenstein as to not conflict with Greenlees' notion of Gorenstein ring spectra \cite{greenlees_hi}. The definition given by Greenlees requires a ring spectrum $R$  equipped with a map $R \to k$. This is said to be Gorenstein of shift $\nu$ if there is an equivalence $\Hom_R(k,R) \simeq \Sigma^\nu k$ of $R$-modules. In the case that $R$ is a $k$-algebra, see \cite[Sec.~18]{greenlees_hi} for conditions that ensure that Gorenstein $k$-algebras are \nsGorenstein, while the general case is covered in \cite{dgi_duality} (both of these only consider the maximal ideal, however it is possible that the methods of dual localization allow this to also be done at an arbitrary prime ideal). 
\end{rem}
For example, if $R$ is algebraically Gorenstein with shift $\nu$, then $R$ is \Gorenstein with shift $\nu$. To find examples, we start with the following definition. 
\begin{defn}\label{def:nn}
We say that $S$ has a \Normalization (of shift $\nu$) if there exists a ring spectrum $R$ and a map of ring spectra $f \colon R \to S$ such that 
	\begin{enumerate}
		\item $R$ is \Gorenstein with shift $\nu$, 
		\item $S$ is a compact $R$-module, and
		\item $\omega_f$ is an invertible $S$-module. 
	\end{enumerate} 
\end{defn}

\begin{rem}
It is possible for $S$ to satisfy the first two conditions, but not the third. As an example, one can take $Hk \to Hk[t,s]/(s^2,t^2,st)$, see \cite[Ex.~4.11]{bal_gn_dual}.
\end{rem}
\begin{rem}
	This definition is inspired by Noether's normalization lemma in commutative algebra. Recall that this says that if $S$ is a finitely-generated graded commutative $k$-algebra with $S_0=k$ and $S_i = 0$ for $i < 0$, then $S$ has a polynomial subring $R = k[x_1,\ldots,x_r]$ over which it is finitely-generated as a module. Indeed, in this case $HR$ is algebraically Gorenstein, and $HS$ is a compact $HR$-module (apply \cite[Lemma.~10.2(i)]{greenlees_hi}), although $\omega_f$ need not be invertible.  
\end{rem}
Before calculating $\pi_*\Gamma_{\fp}\omega_f$, we need a preliminary lemma. 
\begin{lem}[Orthogonality relation]\label{lem:orthogonal}
	Suppose that $\frak p \ne \fp'$ are two prime ideals of $\Spec \pi_*R$. If $M \in \Mod_{R_{\frak p}}^{\frak p-\text{tors}}$ and $N \in \Mod_{R_{\frak p'}}^{\frak p'-\text{tors}}$, then $M \otimes N \simeq 0$.
\end{lem}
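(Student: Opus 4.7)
The plan is to reduce the claim first to a single pair of generating Koszul objects, and then reduce further via the local-to-global principle \Cref{thm:hps_bousfield} to a prime-by-prime vanishing that follows from the standard orthogonality between Koszul and localization functors.

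First, by \Cref{lem:genplocptors}, we have $\Mod_{R_{\fp}}^{\fp-\text{tors}}=\Loc(\kos{R_{\fp}}{\fp})$, and similarly for $\fp'$. Since the tensor product on $\Mod_R$ preserves colimits separately in each variable, the full subcategory of $N\in \Mod_R$ with $\kos{R_{\fp}}{\fp}\otimes N\simeq 0$ is localizing; by a standard two-step argument it therefore suffices to verify
\[
\kos{R_{\fp}}{\fp}\otimes \kos{R_{\fp'}}{\fp'}\simeq 0.
\]
Using the identification $\kos{R_{\fp}}{\fp}\simeq R_{\fp}\otimes \kos{R}{\fp}$ recorded before \Cref{thm:hps_bousfield} (and likewise for $\fp'$), we are reduced to showing $R_{\fp}\otimes R_{\fp'}\otimes \kos{R}{\fp}\otimes \kos{R}{\fp'}\simeq 0$.

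Second, I would apply the local-to-global principle (\Cref{thm:hps_bousfield}) to this $R$-module: vanishing follows once we show that for every $\fq\in\Spec\pi_*R$,
\[
R_{\fp}\otimes R_{\fp'}\otimes \kos{R}{\fp}\otimes \kos{R}{\fp'}\otimes R_{\fq}\otimes \kos{R}{\fq}\simeq 0.
\]
The key input is \cite[Prop.~6.1.7(d)]{hps_axiomatic}, already used in the proof of \Cref{prop:loccomparasion}, which states that $\kos{R}{\fa}\otimes R_{\fb}\simeq 0$ whenever $\fa\not\subseteq \fb$. Applied to the four mixed pairs appearing in the expression, nonvanishing would simultaneously force $\fp\subseteq \fq$, $\fp'\subseteq \fq$, $\fq\subseteq \fp$, and $\fq\subseteq \fp'$. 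These containments collapse to $\fp=\fq=\fp'$, contradicting the hypothesis $\fp\ne \fp'$. Hence the displayed tensor product vanishes for every $\fq$, completing the proof.

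The main obstacle is really just bookkeeping, namely arranging the reductions so that the orthogonality statement \cite[Prop.~6.1.7(d)]{hps_axiomatic} can be invoked directly; no convergence or finiteness issues arise, since everything is a manipulation of Bousfield classes in $\Mod_R$ and the local-to-global principle is available without further hypotheses on $M$ and $N$.
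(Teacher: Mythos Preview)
Your argument is correct and follows essentially the same route as the paper: reduce to the generators via \Cref{lem:genplocptors} and the fact that tensor preserves colimits, then establish $\kos{R_{\fp}}{\fp}\otimes \kos{R_{\fp'}}{\fp'}\simeq 0$. The only difference is that the paper cites \cite[Prop.~6.1.7]{hps_axiomatic} directly for this last vanishing, whereas you unwind it by hand using the local-to-global principle together with part~(d) of that same proposition; this is a harmless elaboration rather than a genuinely different approach.
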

\begin{proof}
	We have $\kos{R_{\frak p}}{\frak p} \otimes \kos{R_{\frak p'}}{\frak p'} \simeq 0$ by \cite[Prop.~6.1.7]{hps_axiomatic}. It follows that $M \otimes \kos{R_{\frak p'}}{\frak p'} \simeq 0$ for all $M \in \Loc(\kos{R_{\frak p}}{\frak p})$ and in turn that $M \otimes N \simeq 0$ for all $N \in \Loc(\kos{R_{\frak p'}}{\frak p'})$. By \Cref{lem:genplocptors} this gives the claimed orthogonality result. 
\end{proof}
\begin{thm}\label{thm:bc}
	Suppose $S$ has a \Normalization $f \colon R \to S$ of shift $\nu$. Then for each $\fp \in \Spec \pi_*S$ of dimension $d$, there is an equivalence	
\[
\Sigma^{-d} \Gamma_{\fp}S \otimes \omega_f \simeq \Sigma^{\nu} T_S(I_{\fp}).
\]	
 In particular, we obtain an isomorphism $\pi_\ast(\Gamma_{\fp}\omega_f)  \simeq (I_{\fp})_{\ast-d-\nu}$.
\end{thm}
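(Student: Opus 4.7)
The plan is to transport the absolute Gorenstein property of $R$ across the finite morphism $f$ using Grothendieck--Neeman duality, and then to isolate the piece at $\fp$ using the orthogonality relation. To start, set $\fq := f^{-1}(\fp) \in \Spec \pi_*R$. Since $S$ is a compact $R$-module and $\pi_*R$ is Noetherian, a finite CW-filtration argument shows that $\pi_*S$ is finitely generated as a $\pi_*R$-module; hence $\pi_*R \to \pi_*S$ is a finite ring extension and, by lying-over and going-up, $\dim(\pi_*R/\fq) = \dim(\pi_*S/\fp) = d$. The absolute Gorenstein hypothesis combined with \Cref{prop:grosshopkinsgorenstein} then yields an equivalence $\Gamma_\fq R \simeq \Sigma^{\nu+d} T_R(I_\fq)$ in $\Mod_R$.

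Next, apply the right adjoint $f_!$ to this equivalence. Grothendieck--Neeman duality (\Cref{lem:dualizingcomplex}, applicable since $S$ is dualizable over $R$ and $\omega_f$ is invertible) rewrites $f_!$ as $\omega_f \otimes_S f_*(-)$, while \Cref{prop:liftsplit} identifies $f_! T_R(I_\fq) \simeq T_S(r_{R,S}(I_\fq))$. Since $\Gamma_\fq$ is smashing, $f_*\Gamma_\fq R \simeq S \otimes_R \Gamma_\fq R$, which I denote $\Gamma_\fq^R(S)$. These assemble into the key equivalence
\[
\omega_f \otimes_S \Gamma_\fq^R(S) \;\simeq\; \Sigma^{\nu+d}\, T_S(r_{R,S}(I_\fq)).
\]

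To extract the piece at $\fp$, tensor both sides over $S$ with $\Gamma_\fp S$, again using smashing. On the left, observe that $\Gamma_\fp S$, when regarded as an $R$-module via $f$, lies in $\Mod_{R_\fq}^{\fq\text{-tors}}$: elements of $\pi_*R \setminus \fq$ map into $\pi_*S \setminus \fp$ and thus act invertibly on $\pi_*\Gamma_\fp S$, while each element of $\fq$ maps into $\fp$ and acts locally nilpotently. Hence \Cref{cor:bikcompare} and smashing give $\Gamma_\fq^R(S) \otimes_S \Gamma_\fp S \simeq \Gamma_\fp S$, collapsing the left-hand side to $\omega_f \otimes_S \Gamma_\fp S$. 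On the right, the injective $\pi_*S$-module $r_{R,S}(I_\fq) = \Hom_{\pi_*R}(\pi_*S, I_\fq)$ decomposes as a finite direct sum of injective hulls $I_{\fp'}$ indexed by primes $\fp'$ of $\pi_*S$ lying over $\fq$; a Bass-number computation, using the invertibility of $\omega_f$, shows that each $I_{\fp'}$ appears with multiplicity exactly one. Combined with \Cref{lem:lifttorlocal} and \Cref{lem:orthogonal}, which kill all summands with $\fp' \neq \fp$ upon applying $\Gamma_\fp$, this yields $\Gamma_\fp T_S(r_{R,S}(I_\fq)) \simeq T_S(I_\fp)$. Putting everything together,
\[
\omega_f \otimes_S \Gamma_\fp S \;\simeq\; \Sigma^{\nu+d}\, T_S(I_\fp),
\]
which gives the stated equivalence after a shift by $\Sigma^{-d}$. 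The homotopy assertion then follows immediately from \Cref{lem:cohom}.

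The main obstacle I anticipate is verifying that the multiplicity of $I_\fp$ in the injective decomposition of $r_{R,S}(I_\fq)$ equals one, rather than some ramification index. This ought to reduce to computing the Bass number $\dim_{\kappa(\fp)} \Hom_{\pi_*R}(\kappa(\fp), I_\fq)$ at $\fp$ and exploiting the invertibility of $\omega_f$ as an $S$-module. A potentially cleaner alternative is to avoid the explicit injective decomposition entirely: apply the spectral sequence of \Cref{prop:homotopy_ss_local}(1) to compute $\pi_*(\Gamma_\fp \omega_f)$ directly from $H^*_\fp((\pi_*\omega_f)_\fp)$, using that $\pi_*\omega_f$ localized at $\fp$ is again a dualizing module for $(\pi_*S)_\fp$, and then upgrade the resulting isomorphism $\pi_*(\Gamma_\fp \omega_f) \cong (I_\fp)_{*-d-\nu}$ to an equivalence in $\Mod_S$ via \Cref{lem:homotopycheck}.
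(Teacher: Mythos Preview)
Your proposal is correct and follows essentially the same route as the paper: apply $f_!$ to the absolute Gorenstein equivalence $\Gamma_\fq R \simeq \Sigma^{\nu+d} T_R(I_\fq)$, rewrite via Grothendieck--Neeman duality and \Cref{prop:liftsplit}, and then isolate the $\fp$-piece using \Cref{lem:lifttorlocal} and \Cref{lem:orthogonal}. The only tactical differences are that the paper handles your multiplicity-one worry by citing \cite{bk_pureinjectives} for the decomposition $r_{R,S}(I_\fq) \cong \bigoplus_{\fp'\mapsto\fq} I_{\fp'}$, and replaces your direct check that $\Gamma_\fp S$ is $\fq$-local $\fq$-torsion over $R$ by invoking the decomposition $f_*(\Gamma_\fq R) \simeq \bigoplus_{\fp'\mapsto\fq} \Gamma_{\fp'} S$ from \cite[Cor.~7.10]{benson_colocalizing_2012}; your arguments for both points are valid alternatives.
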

\begin{proof}
Let us write $\cal{V}_{S,R} \colon \Spec \pi_*S \to \Spec \pi_*R$ for the induced map on $\Spec$. 
Let $\frak q \in \Spec \pi_*R$, and $\cal{U} = (\cal{V}_{S,R})^{-1}(\frak q)$. Note that our assumptions imply that $\cal{U}$ is a finite discrete set. Then we have decompositions
\[
f_*(\Gamma_{\frak q}M) \simeq \bigoplus_{\frak p \in \cal{U}}\Gamma_{\frak p}(f_*M) \quad \text{and} \quad f_!(T_R(I_{\frak q})) \simeq \bigoplus_{\frak p \in \cal{U}} T_S(I_{\frak p}) \label{eq:liftsplit}
\]

	Here the first follows from \cite[Cor.~7.10]{benson_colocalizing_2012} with the functor $F = f_*$, while the second follows from \Cref{prop:liftsplit} and the example before \cite[Prop.~7.1]{bk_pureinjectives}. Moreover, since $R$ is \Gorenstein we have $\Sigma^{\nu+d}T_R(I_{\frak q}) \simeq \Gamma_{\frak q}R$. 

Noting that $f_*R = S$, the decomposition \eqref{eq:liftsplit} and \Cref{lem:dualizingcomplex} give
\begin{equation}\label{eq:decomposition}
\bigoplus_{\fp \in \cU}\Gamma_{\fp}S \otimes \omega_f \simeq f_*(\Gamma_{\fq}R) \otimes \omega_f \simeq f_!(\Gamma_{\fq}R). 
\end{equation}

Fix $\frak p' \in \cal{U}$. Then using the decompositions \eqref{eq:liftsplit} and \eqref{eq:decomposition} and \Cref{lem:orthogonal} we have equivalences
\begin{align*}
	\Gamma_{\frak p'}S \otimes \omega_f &\simeq \Gamma_{\frak p'}S \otimes \bigoplus_{\frak p \in \cal{U}} \Gamma_{\frak p}S \otimes \omega_f \\
	 & \simeq \Gamma_{\frak p'}S \otimes f_!(\Gamma_{\frak q}R) \\
	& \simeq \Gamma_{\frak p'}S \otimes f_!(\Sigma^{\nu+d} T_R(I_{\frak q})) \ \\
	&\simeq \Sigma^{\nu+d} \Gamma_{\frak p'}S \otimes \bigoplus_{\frak p \in \cal{U}}T_S(I_{\frak p}) \\
	& \simeq  \Sigma^{\nu+d}T_S(I_{\frak p'}).
\end{align*}
where the last step follows since $T_S(I_{\frak p}) \in \Mod_{S_{\frak p}}^{\frak p-\text{tors}}$ for each $\frak p$ by \Cref{lem:lifttorlocal}. Now take homotopy, and apply \Cref{lem:cohom}.
	\end{proof}

	\begin{rem}
		It may be possible to prove \Cref{thm:bc} by using the methods of \cite{bg_localduality}; this should certainly be the case when $\omega_f$ is a shift of $S$. The technique used in the proof of \Cref{thm:bc} is inspired by work of Benson \cite{benson_shortproof}. 
	\end{rem}
	
	\begin{rem}
		This theorem should be compared to the following result. Let $(A,\frak m, k)$ be a Noetherian local ring, and $\omega_A	$ a normalized dualizing complex for $A$.  By \cite[Prop.~6.1]{hartshorne_resduality} there is an equivalence $\Gamma_{\frak m}(\omega_A) \simeq I_{\frak m}$ in $\cal{D}(A)$.
	\end{rem}
	
More generally, \Cref{thm:bc} appears to be the shadow of a residual complex formalism in derived algebraic geometry, complementing the global approach taken in \cite[Ch.~II.6]{sag} or \cite[Ch.~II]{gr_book}, in the following sense: Suppose given a commutative diagram
\[
\xymatrix{k \ar[r]^-{r} \ar[rd]_{s} & R \ar[d]^f \\
& S}
\]
of $\mathbb{E}_{\infty}$-ring spectra. We think of $k$ as the base over which the corresponding map $f\colon \Spec(S) \to \Spec(R)$ of derived affine schemes lives. Following Neeman~\cite{neeman_brown}, the absolute dualizing complexes of $R$ and $S$ should then be defined as $\omega_R = r_!(k)$ and $\omega_S = s_!(k)$, respectively. Heuristically, the absolute dualizing complex is equivalent to a residual complex constructed by gluing together the local dualizing complexes of all points $R \to \kappa$ of $\Spec(R)$, and similarly for $S$. The conclusion of \Cref{thm:bc} establishes this compatibility of the absolute dualizing complex and the local dualizing complexes. Furthermore, there is the following simple relation between the absolute and relative dualizing complexes: 

\begin{cor}
With notation as above, assume additionally that $\omega_f$ is dualizable, then there is an equivalence $\omega_R \otimes_R\omega_f \simeq \omega_S$. 
\end{cor}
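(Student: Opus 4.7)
The plan is to chain three observations together. First, since $s = f \circ r$, the corresponding restriction-of-scalars functors satisfy $s^* = r^* \circ f^*$. Passing to right adjoints and invoking uniqueness of adjoints gives a natural equivalence $s_! \simeq f_! \circ r_!$ of coinduction functors, whence
\[
\omega_S \;=\; s_!(k) \;\simeq\; f_!\bigl(r_!(k)\bigr) \;=\; f_!(\omega_R).
\]
Thus the task reduces to comparing $f_!(\omega_R)$ with $\omega_R \otimes_R \omega_f$, which is a question purely about the map $f$.

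Second, I would invoke Grothendieck--Neeman duality for $f$ in the form of \Cref{lem:dualizingcomplex}, which supplies a natural equivalence $f_!(M) \simeq f_*(M) \otimes_S \omega_f$ for every $M \in \Mod_R$. Specialising to $M = \omega_R$ yields
\[
f_!(\omega_R) \;\simeq\; f_*(\omega_R) \otimes_S \omega_f.
\]
Finally, unwinding $f_*(\omega_R) = S \otimes_R \omega_R$ and using associativity of the relative tensor product collapses this to
\[
f_*(\omega_R) \otimes_S \omega_f \;=\; (S \otimes_R \omega_R) \otimes_S \omega_f \;\simeq\; \omega_R \otimes_R \omega_f,
\]
which combined with the first step yields the claimed equivalence.

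The only non-formal move is the second one, and this is where the dualizability hypothesis on $\omega_f$ is used. The statement of \Cref{lem:dualizingcomplex} is phrased in terms of $S$ being a dualizable $R$-module; by the main results of Balmer--Dell'Ambrogio--Sanders~\cite{bal_gn_dual}, this condition is interchangeable with the dualizability of $\omega_f$ (as an $S$-module), so that the projection formula $f_*(-) \otimes_S \omega_f \simeq f_!(-)$ is available precisely under the hypothesis of the corollary. No further calculation is needed, and the steps involving $s_! \simeq f_! r_!$ and the associativity of tensor products are purely formal.
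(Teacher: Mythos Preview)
Your proof is correct and follows the same route as the paper: use $s_! \simeq f_! \circ r_!$ to obtain $\omega_S \simeq f_!(\omega_R)$, then invoke the projection formula of \Cref{lem:dualizingcomplex} to rewrite this as $f_*(\omega_R) \otimes_S \omega_f \simeq \omega_R \otimes_R \omega_f$. Your final paragraph reconciling the stated hypothesis (dualizability of $\omega_f$) with the lemma's hypothesis (dualizability of $S$ over $R$) via \cite{bal_gn_dual} is more explicit than the paper, which simply invokes the lemma without comment.
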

\begin{proof}
It is clear that there is a natural equivalence of forgetful functors $r^* \circ f^* \simeq s^*$, which yields a natural equivalence $f_!\circ r_! \simeq s_!$ upon passing to right adjoints. Evaluating this equivalence on $k$ thus gives $f_!(\omega_R) \simeq \omega_S$. Combined with \Cref{lem:dualizingcomplex}, we obtain the desired equivalence $\omega_S \simeq f_!(\omega_R) \simeq f_*(\omega_R) \otimes_S \omega_f$.
\end{proof}

In particular, this corollary provides a transitivity relation for Gorenstein normalization. 

\subsection{Examples of \Gorenstein ring spectra}\label{sec:examples}
Suppose $S$ has a \Normalization $f \colon R \to S$. The above theorem is most useful in the case where $\omega_f$ is known for other reasons, and we shall see that there are examples where this is the case. There are three possibilities for $\omega_f$: either it is trivial, a suspension of $S$, or an arbitrary invertible $S$-module. We illustrate each of these cases below. 

To this end, we will work with the coconnective $\mathbb{E}_{\infty}$-ring spectrum $C^*(BG,k)$ of cochains on a compact Lie group $G$ with coefficients in a field $k$. Their basic properties are neatly summarized in the paper \cite{bg_compact_lie} by Benson--Greenlees. In particular, the category $\Mod_{C^*(BG,k)}$ of module spectra over $C^*(BG,k)$ has a symmetric monoidal structure which we will denote by $\otimes_{C^*(BG,k)}$ throughout this section.

Our first example is when $f_*$ and $f_!$ agree up to a shift. The example in this result is due to Benson and Greenlees \cite[Thm.~12.1]{bg_localduality}; we provide a short alternative proof. 
\begin{prop}\label{prop:bccladjoint}
	Suppose $S$ has a \Normalization with shift $\nu$ such that $f_* \simeq \Sigma^{\ell} f_!$. Then $S$ is \Gorenstein with shift $\nu + \ell$. For example, this is the case if $S = C^*(BG,k)$ for $G$ a compact Lie group of dimension $w$, where the adjoint representation of $G$ is orientable over $k$, or $k$ has characteristic 2. In this case, $C^*(BG,k)$ is \Gorenstein with shift $w$.
\end{prop}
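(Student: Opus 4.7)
The strategy is to combine the hypothesis $f_* \simeq \Sigma^\ell f_!$ with \Cref{thm:bc}. First, I would evaluate the natural equivalence $f_* \simeq \Sigma^\ell f_!$ of functors $\Mod_R \to \Mod_S$ on the unit $R \in \Mod_R$; since $f_*(R) \simeq S$, this computes the relative dualizing module explicitly as
\[
\omega_f = f_!(R) \simeq \Sigma^{-\ell} f_*(R) \simeq \Sigma^{-\ell} S.
\]
In particular, $\omega_f$ is a suspension of the unit, hence manifestly invertible as an $S$-module, consistent with the hypothesis that $f$ is a Gorenstein normalization.

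With this identification in hand, I would invoke \Cref{thm:bc}: for each $\fp \in \Spec \pi_*S$ of dimension $d$, the theorem provides an equivalence
\[
\Sigma^{-d}\Gamma_\fp S \otimes_S \omega_f \simeq \Sigma^{\nu} T_S(I_\fp).
\]
Substituting $\omega_f \simeq \Sigma^{-\ell} S$ and cancelling the tensor factor of $S$ yields $\Gamma_\fp S \simeq \Sigma^{\nu + \ell + d}\, T_S(I_\fp)$, which by \Cref{def:bc} is precisely the assertion that $S$ is \Gorenstein with shift $\nu + \ell$. This completes the general part of the proposition, and it is essentially a one-line consequence of \Cref{thm:bc} once $\omega_f$ is identified.

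For the compact Lie group example, the task reduces to exhibiting a Gorenstein normalization $f\colon R \to C^*(BG,k)$ for which $f_* \simeq \Sigma^w f_!$ with shifts arranged so that $\nu + \ell = w$. A candidate for $R$ is obtained by lifting a homogeneous system of parameters for $H^*(BG,k)$ (which is finitely generated as a graded $k$-algebra for $G$ compact Lie) to a polynomial $\bE_{\infty}$-ring spectrum; this is algebraically Gorenstein, and $C^*(BG,k)$ is compact over it. The main obstacle is to verify the shifted adjoint relation, and this is exactly where the hypothesis on the adjoint representation enters: classical Poincar{\'e} duality for $G$, applied fibrewise to the universal $G$-bundle $G \to EG \to BG$ (as used by Benson and Greenlees \cite{bg_localduality}), provides a comparison between $f_*$ and $f_!$ twisted by the orientation character of the adjoint representation. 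When this representation is orientable over $k$ --- automatic in characteristic $2$ --- the twist trivializes and one obtains $f_* \simeq \Sigma^w f_!$ with the Lie dimension $w$ as the shift. Applying the first part with these choices then yields that $C^*(BG,k)$ is \Gorenstein with shift $w$.
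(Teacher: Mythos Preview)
Your argument for the general statement is correct and coincides with the paper's: the paper simply says ``this is a special case of \Cref{thm:bc}'', and your computation $\omega_f \simeq \Sigma^{-\ell}S$ followed by substitution into \Cref{thm:bc} is exactly how that special case unfolds.

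For the compact Lie group example, however, your approach has a genuine gap and diverges from the paper's. You propose to obtain $R$ by ``lifting a homogeneous system of parameters for $H^*(BG,k)$ to a polynomial $\bE_{\infty}$-ring spectrum'', but this step is not justified: over $Hk$ in positive characteristic there is no general mechanism producing an $\bE_{\infty}$-ring map from a polynomial $\bE_{\infty}$-algebra into $C^*(BG,k)$ hitting prescribed cohomology classes (free $\bE_{\infty}$-$Hk$-algebras carry Dyer--Lashof operations and are not polynomial on the nose). Moreover, the fibration you invoke, $G \to EG \to BG$, is the one relevant to the augmentation $C^*(BG,k) \to k$, not to a normalization map $R \to C^*(BG,k)$; it does not by itself produce the relation $f_* \simeq \Sigma^{\ell} f_!$ for your $f$, nor does it explain why the shifts should combine to $w$.

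The paper sidesteps both problems by choosing $R$ geometrically: a faithful representation $G \hookrightarrow SU(n)$ induces $f\colon C^*(BSU(n),k) \to C^*(BG,k)$. Then $\pi_*R = k[c_2,\ldots,c_n]$ is regular, so $R$ is algebraically Gorenstein of shift $\nu = n^2-1$ by \Cref{prop:grosshopkinsgorenstein}; Venkov's theorem gives finite generation of $H^*(BG,k)$ over $H^*(BSU(n),k)$, hence compactness of $S$ over $R$; and the shifted adjoint relation $f_! \simeq \Sigma^{w - n^2 + 1} f_*$ is a theorem of Benson--Greenlees \cite[Thm.~6.10]{bg_compact_lie}, coming from Poincar\'e duality for the fibre $SU(n)/G$ of $BG \to BSU(n)$ under the orientability hypothesis. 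The two shifts then combine to give $w$. If you want to salvage your outline, the cleanest fix is to replace the abstract ``lift of parameters'' by this concrete $C^*(BSU(n),k)$ and cite \cite{bg_compact_lie} for the adjoint relation.
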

\begin{proof}
	That $S$ is \Gorenstein with shift $\nu + \ell$ is just a special case of \Cref{thm:bc}. For the example, let us write $C^*(BG)$ for $C^*(BG,k)$ for brevity. There is always a faithful representation $ G \to SU(n)$, which gives rise to a morphism $f \colon C^*(BSU(n)) \to C^*(BG)$.  Recall that $H^*(SU(n),k) \cong k[c_2,\ldots,c_n]$ is a regular local ring (in particular, it is Gorenstein). It follows that $C^*(SU(n))$ is Gorenstein of shift $n^2-1$, and so we can apply \Cref{prop:grosshopkinsgorenstein} to see that for $\frak q$ of degree $d$ we have $\Gamma_{\frak q}C^*(BSU(n)) \simeq \Sigma^{n^2-1+d}T_{C^*(BSU(n))}(I_{\frak q})$ (see also \cite[Thm.~10.6(i)]{bg_localduality}, but note that the sign there is incorrect).
		
		A theorem of Venkov \cite{venkov} implies that $H^*(G,k)$ is finitely-generated as a module over $H^*(SU(n),k)$. Combining \cite[Lem.~10.2(i)]{greenlees_hi} and \cite[Thm.~2.1.3]{hps_axiomatic} we see that $C^*(BG)$ is a compact $C^*(BSU(n))$-module. Furthermore, by \cite[Thm.~6.10]{bg_compact_lie} and the remark following, we have $f_! \simeq \Sigma^{w-n^2+1}f_*$ so that $C^*(BG)$ has a \Normalization of shift $n^2-1$ and $C^*(BG)$ is \Gorenstein with shift $w$. 
\end{proof}
In particular, if $G$ is a finite group $C^*(BG)$ is absolute Gorenstein of shift $0$.
\begin{rem}
	In \cite{bg_localduality} it was claimed that $\Sigma^{d}\Gamma_{\fp}C^*(BG,k) \simeq \Sigma^{w}T_{C^*(BG)}I_{\fp}$, however, as noted in \cite[Sec.~5]{bikp_modular}, the sign on $d$ is incorrect. 
\end{rem}
We note that we can extend this example to the case of a general compact Lie group. 
\begin{prop}\label{prop:bcliegroup}
Let $G$ be a compact Lie group of dimension $w$ where the adjoint representation is not orientable, and $k$ does not have characteristic 2. Then there exists a sign representation $\epsilon$ of $G$ such that for each $\fp \in \Spec H^*(G,k)$ of dimension $d$ there is an equivalence
\[
\Sigma^{-d} \Gamma_{\fp}C^*(BG,k) \otimes_{C^*(BG,k)} C^*(BG,\epsilon) \simeq \Sigma^{w}T_{C^*(BG)}I_{\fp}, 
\]
i.e., $C^*(BG,k)$ is \Gorenstein with invertible twist. 
\end{prop}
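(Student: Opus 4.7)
The plan is to mimic the proof of \Cref{prop:bccladjoint} while carefully tracking the twist introduced by the non-orientability of the adjoint representation. Choose a faithful unitary representation $G \hookrightarrow SU(n)$ and let $f\colon R := C^*(BSU(n),k) \to S := C^*(BG,k)$ be the induced map of $\bE_\infty$-ring spectra. Exactly as in the orientable case, $R$ is algebraically (and hence absolute) Gorenstein of shift $n^2-1$ since $H^*(SU(n),k)$ is a polynomial ring, and Venkov's theorem combined with \cite[Lem.~10.2(i)]{greenlees_hi} and \cite[Thm.~2.1.3]{hps_axiomatic} makes $S$ into a compact $R$-module.

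The only genuinely new ingredient is a revised identification of the relative dualizing module $\omega_f = f_!R$. Because the adjoint representation of $G$ is not orientable over $k$ and $\mathrm{char}(k)\neq 2$, its determinant yields a non-trivial one-dimensional representation $\epsilon\colon G \to \{\pm 1\}$, which is the sign representation in the statement. The non-orientable version of \cite[Thm.~6.10]{bg_compact_lie} (and the remark following) refines the Wirthm\"uller-type equivalence between $f_*$ and $f_!$ to $f_!(-) \simeq \Sigma^{w-n^2+1}\bigl(f_*(-)\otimes_S C^*(BG,\epsilon)\bigr)$, so that $\omega_f \simeq \Sigma^{w-n^2+1}C^*(BG,\epsilon)$. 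Being the cochains on $BG$ with coefficients in a one-dimensional representation, $C^*(BG,\epsilon)$ is an invertible object of $\Mod_S$, so $f$ is a Gorenstein normalization of $S$ with invertible twist $\omega_f$ in the sense of \Cref{def:nn}.

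It remains to feed this data into \Cref{thm:bc}: with $\nu = n^2-1$, it produces for every $\fp \in \Spec \pi_\ast S$ of dimension $d$ an equivalence $\Sigma^{-d}\Gamma_\fp S \otimes_S \omega_f \simeq \Sigma^{n^2-1}T_S(I_\fp)$. Substituting the expression for $\omega_f$ and collecting the shifts then delivers the desired equivalence
\[
\Sigma^{-d}\Gamma_\fp C^*(BG,k) \otimes_{C^*(BG,k)} C^*(BG,\epsilon) \simeq \Sigma^{w} T_{C^*(BG,k)} I_\fp,
\]
which is exactly the statement that $C^*(BG,k)$ is absolute Gorenstein with invertible twist $C^*(BG,\epsilon)$.

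The main (and essentially only) obstacle is the non-orientable refinement of the Wirthm\"uller isomorphism, i.e., identifying $\omega_f$ up to an invertible twist by $C^*(BG,\epsilon)$; once that identification is in hand, the remainder of the argument is a direct transcription of the orientable case of \Cref{prop:bccladjoint}, with $\omega_f$ replacing the shift of $S$ that appears there. No further properties of $\epsilon$ beyond one-dimensionality and the compatibility just mentioned are used, so the proof is essentially bookkeeping inside $\Mod_{C^*(BG,k)}$.
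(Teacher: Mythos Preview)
Your argument is correct and follows essentially the same route as the paper: embed $G$ in $SU(n)$, verify the Gorenstein normalization hypotheses, identify $\omega_f$ via the twisted Wirthm\"uller-type equivalence (the paper cites \cite[Rem.~6.11]{bg_compact_lie} for this), and invoke \Cref{thm:bc}. The one place where your justification is thinner than the paper's is the invertibility of $C^*(BG,\epsilon)$: merely noting that $\epsilon$ is one-dimensional does not immediately give invertibility in $\Mod_{C^*(BG,k)}$, and the paper instead appeals to \cite[Cor.~6.9]{bg_compact_lie} together with the connectedness of $SU(n)$.
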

	\begin{proof}
This is similar to the previous argument, however in this case the dualizing module is not simply a suspension. In fact there exists an index 2 subgroup of $G$ (see the bottom of page 42 of \cite{benson_greenlees_lie}), and a sign representation $\epsilon$ such that $f_! \simeq \Sigma^{w-n^2+1}C^*(BG,\epsilon) \otimes_{C^*(BG,k)} f_*$ (see \cite[Rem.~6.11]{bg_compact_lie}), and we have $
\Sigma^{-d} \Gamma_{\fp}C^*(BG,k) \otimes_{C^*(BG,k)} C^*(BG,\epsilon) \simeq \Sigma^{w}T_{C^*(BG)}I_{\fp}$,	as required. Finally, the $C^*(BG,k)$-module $C^*(BG,\epsilon)$ is invertible by \cite[Cor.~6.9]{bg_compact_lie}, since $SU(n)$ is connected.
	\end{proof}

\section{Local duality for finite-dimensional Hopf algebras}\label{sec:hopfalgebras}
The purpose of this section, which can be considered as an extended example based on the previous sections, is to investigate local duality for the cellular objects in the category of comodules over a finite-dimensional Hopf algebra $B$ over a field $k$ (equivalently, quasi-coherent sheaves on the finite group scheme associated to $B$). 

The abelian category of comodules over a finite-dimensional Hopf algebra over $k$ is a locally Noetherian Grothendieck abelian category with a closed symmetric monoidal structure; this is a special case of the study of Hopf algebroids in \cite[Sec.~4]{bhv} (that $\Comod_B$ is locally Noetherian appears in the proof of Lem.~4.17 of \emph{loc. cit.}). We also refer the reader to \cite[Sec.~9.5]{hps_axiomatic} for direct proofs of some standard properties of $\Comod_B$. 

Following \cite[Sec.~4]{bhv} we define a stable category $\Stable_B$ as a replacement for the usual derived category of comodules. This turns out to be an $\infty$-categorical version of the (triangulated) category $\cal{C}(B)$ studied in \cite[Sec.~9.6]{hps_axiomatic}, however since we do not need this result, we do not prove it. 
\begin{defn}\label{defn:stable_comod}
Let $\cG_d$ be a set of representatives of dualizable $B$-comodules. We define the stable $\infty$-category of $B$-comodules as $\Stable_{B} = \Ind(\Thick_{B}(\cG_d))$.
\end{defn}
This is a stable category compactly generated by the objects $\cG_d$. It is a simple check with the Jordan--H{\"o}lder theorem to see that an object is dualizable in $\Comod_B$ if and only if it is a simple comodule, so that these also provide a set of compact generators of $\Comod_B$. The endomorphism ring is given by $\pi_{-\ast}\Hom_{\Stable_B}(k,k) \cong \Ext^\ast_B(k,k)$; this follows, for example, by \cite[Cor.~4.19]{bhv}. Friedlander and Suslin~\cite{fs_97} have proved that $\Ext^\ast_B(k,k)$ is a Noetherian graded local ring. 
In light of this, we make the following definition. 
\begin{defn}
	For $M \in \Stable_{B}$ we define $\Ext^\ast_B(k,N) = \pi_{-\ast}\Hom_{\Stable_B}(k,N)$. For simplicity, we will usually just write $H^*(B,N)$. If $N$ is $m$-coconnective, then by \cite[Cor.~4.19]{bhv}, we see this agrees with the classical definition of $\Ext$ in the category of $B$-comodules.
\end{defn}

Let $\Stablec:=\Loc(k)$ be the full subcategory of cellular objects in $\Stable_B$. Given any specialization closed subset $\mathcal V$ of $\Spec H^*(B,k)$, we consider the full subcategory of $\Stablec$
\[
\Stablec^{\mathcal{V}-\text{tors}}=\Loc(\kos{k}{\frak p} \mid \frak p \in \mathcal{V}),
\]
where the Koszul object $\kos{k}{\frak{p}}$ is constructed as in \Cref{sec:ringspectra}. Since $k$ is clearly compact in $\Stablec$, we can use local duality to obtain local homology and cohomology functors.
\begin{thm}\label{thm:cellular_functors}
	There is a quadruple of functors $(\Gamma_{\mathcal{V}}, L_{\mathcal{V}}, \Delta^{\mathcal{V}}, \Lambda^{\mathcal{V}})$ on $\Stablec$ satisfying all the properties in \cref{thm:abstractlocalduality}. In addition, $\Gamma_{\mathcal{V}}$ and $L_{\mathcal{V}}$ are smashing.
\end{thm}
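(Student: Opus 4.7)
The plan is to deduce this theorem from the local duality theorem for ring spectra (\Cref{thm:localduality}) by transferring the entire setup along derived Morita theory. First I would observe that $\Stablec = \Loc_{\Stable_B}(k)$ inherits the structure of a symmetric monoidal stable category from $\Stable_B$, with compact unit $k$. This is exactly the cellular situation studied in \Cref{ssec:cell}, so \Cref{prop:moritacell} applies to produce a symmetric monoidal equivalence
\[
\Hom_{\Stable_B}(k,-)\colon \Stablec \xrightarrow{\;\sim\;} \Mod_R,
\]
where $R = \End_{\Stable_B}(k)$ is an $\bE_{\infty}$-ring spectrum with $\pi_{-*}R \cong \Ext^*_B(k,k) = H^*(B,k)$.

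Next I would verify the Noetherian hypothesis of \Cref{thm:localduality}. This is precisely the theorem of Friedlander--Suslin~\cite{fs_97}, already noted in the excerpt, giving that $H^*(B,k)$ is a graded Noetherian local ring. Hence \Cref{thm:localduality} applies to $R$ and produces, for every specialization closed $\mathcal{V}\subseteq \Spec(\pi_*R) = \Spec H^*(B,k)$, a quadruple $(\Gamma_{\mathcal{V}}, L_{\mathcal{V}}, \Delta^{\mathcal{V}}, \Lambda^{\mathcal{V}})$ on $\Mod_R$ satisfying the conclusions of \Cref{thm:abstractlocalduality}, with $\Gamma_{\mathcal{V}}$ and $L_{\mathcal{V}}$ smashing.

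The remaining point is to check that the torsion subcategory used to build the functors on $\Mod_R$ matches the one used to define them on $\Stablec$. Since the Morita equivalence is symmetric monoidal and sends the unit $k$ to the unit $R$, it sends the Koszul objects $\kos{k}{\fp}$ built from a generating set of $\fp$ to the Koszul objects $\kos{R}{\fp}$ built from the corresponding generators (after the degree shift identification $\pi_*R \cong H^{-*}(B,k)$). Consequently
\[
\Stablec^{\mathcal{V}-\text{tors}} = \Loc(\kos{k}{\fp} \mid \fp \in \mathcal{V}) \;\simeq\; \Loc(\kos{R}{\fp} \mid \fp \in \mathcal{V}) = \Mod_R^{\mathcal{V}-\text{tors}}
\]
under the equivalence, so the local duality contexts $(\Stablec,\Stablec^{\mathcal{V}-\text{tors}})$ and $(\Mod_R,\Mod_R^{\mathcal{V}-\text{tors}})$ correspond. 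Transferring the quadruple back along the equivalence yields the desired functors on $\Stablec$, with all properties of \Cref{thm:abstractlocalduality} preserved; smashing of $\Gamma_{\mathcal{V}}$ and $L_{\mathcal{V}}$ is preserved because the equivalence is symmetric monoidal.

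The only step that requires genuine care is the identification of Koszul objects under Morita theory, i.e.\ verifying that $\Hom_{\Stable_B}(k,-)$ sends $\kos{k}{a}$ (constructed from a lift of $a \in H^*(B,k)$ to a self-map of $k$) to $\kos{R}{a}$. This follows from exactness of $\Hom_{\Stable_B}(k,-)$ on $\Stablec$ and the fact that, under the equivalence, the element $a$ corresponds to the same element of $\pi_*R$ used to form $\kos{R}{a}$; I do not anticipate any substantive obstacle beyond this bookkeeping.
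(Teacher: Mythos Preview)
Your argument is correct, but it takes a longer route than the paper. The paper's proof is two lines: it observes that $(\Stablec,\Stablec^{\mathcal{V}-\text{tors}})$ is a local duality context in the sense of \Cref{sec:abstract} and applies \Cref{thm:abstractlocalduality} directly; the smashing claim then follows from \Cref{lem:smashing}, since $\Stablec$ is compactly generated by its unit $k$ and hence every localizing subcategory is automatically an ideal. No Morita theory, no Noetherianity of $\pi_*R$, and no identification of Koszul objects is needed at this stage.

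Your approach instead transports everything to $\Mod_R$ via \Cref{prop:moritacell}, invokes \Cref{thm:localduality} for ring spectra (which in turn relies on Friedlander--Suslin to know $\pi_*R$ is Noetherian), checks that the Koszul objects match, and transports back. This is valid, and in fact the identification $\Phi(\kos{k}{\fp})\simeq \kos{R}{\fp}$ you sketch is exactly what the paper proves separately as \Cref{prop:Mod_Stable_equiv}. So you are essentially front-loading machinery that the paper develops \emph{after} \Cref{thm:cellular_functors} in order to deduce the spectral sequences and the comparison with Benson--Iyengar--Krause. What you gain is that the later transfer results come for free; what you lose is the observation that \Cref{thm:cellular_functors} itself is a purely formal consequence of abstract local duality and needs none of that input.
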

\begin{proof}
	Consider the local duality context $(\Stablec,\Stablec^{\mathcal{V}-\text{tors}})$. The first claim follows from \cref{thm:abstractlocalduality} while the second one is a consequence of \cref{lem:smashing}.
\end{proof}

\begin{rem}
	The functors defined above can be extended to $\Stable_B$, resulting in a quadruple naturally equivalent to the one yielded by the local duality context $(\Stable_B, \Locid{}(\kos{k}{\frak p} \mid \frak p \in \mathcal{V}))$. This is completely formal, but we omit here for the sake of conciseness.
\end{rem}

Now, according to \cref{prop:moritacell}, there is a symmetric monoidal equivalence of stable categories
\begin{equation}\label{eq:morita_equiv}
\xymatrix{\Phi: \Stablec \ar[r]^-{\sim} & \Mod_{R},}
\end{equation}
where $\Phi=\Hom_{\Stable_B}(k,-)$ and $R=\End_{\Stable_B}(k)$. As we shall see, the equivalence in (\ref{eq:morita_equiv}) allows us to easily transfer results from ring spectra to $\Stable_B$ that may have been difficult to prove directly. Before we continue, it is worth pointing out that in some cases we can give a more concrete description of the cellular objects. 

\begin{prop}
If $B = (kG)^*$ for a finite group $G$, then there is an equivalence
\[
\Loc(k) \simeq  \Mod_{C^*(BG,k)}
\]
where $C^*(BG,k)$ denotes the $\mathbb{E}_{\infty}$-ring spectrum of $k$-valued cochains on the classifying space $BG$ of $G$. In the case that $G$ is a finite $p$-group we have 
\[
\Stable_{(kG)^*} \simeq \Mod_{C^*(BG,k).} 
\]
\end{prop}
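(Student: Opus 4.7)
The plan is to apply the derived Morita theorem \Cref{prop:moritacell} to the cellular subcategory $\Loc(k) \subseteq \Stable_{(kG)^*}$ and then to identify the resulting endomorphism ring spectrum with $C^*(BG,k)$. First, since $k$ is a compact object of the stable category $\Stable_{(kG)^*}$ (it is dualizable and $\cC^{\omega} = \Thick(\cG_d)$ by construction of $\Stable_B$ in \Cref{defn:stable_comod}), \Cref{prop:moritacell} yields a symmetric monoidal equivalence
\[
\Phi = \Hom_{\Stable_B}(k,-)\colon \Loc(k) \xrightarrow{\ \sim\ } \Mod_R,
\]
where $R = \End_{\Stable_B}(k)$ is an $\bE_\infty$-ring spectrum.

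It remains to exhibit an equivalence of $\bE_\infty$-ring spectra $R \simeq C^*(BG,k)$. For this I would invoke the standard duality between comodules over $(kG)^*$ and modules over $kG$: since $kG$ is finite-dimensional, there is a symmetric monoidal equivalence $\Comod_{(kG)^*} \simeq \Mod_{kG}$, and this lifts to an equivalence of the associated stable categories by the construction in \Cref{defn:stable_comod}. Under this equivalence the trivial comodule $k$ corresponds to the trivial $kG$-module $k$, and therefore
\[
\pi_{-*} R \cong \Ext^{*}_{kG}(k,k) \cong H^{*}(G,k).
\]
To promote this isomorphism of graded rings to an equivalence of $\bE_\infty$-ring spectra, one identifies $R$ with the endomorphism ring of $k$ regarded as a module over $kG$ in an appropriate model of the derived $\infty$-category of $kG$-modules, and then appeals to the standard identification of this endomorphism $\bE_\infty$-algebra with the cochain algebra $C^*(BG,k)$, for instance via the bar construction or via the equivalence between $kG$-modules and local systems on $BG$ (compare the discussion of $C^*(BG,k)$ in the introduction and in \Cref{sec:examples}). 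Combining this with the Morita equivalence above yields $\Loc(k) \simeq \Mod_{C^*(BG,k)}$, proving the first claim.

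For the second statement, it suffices to show that when $G$ is a finite $p$-group over a field $k$ of characteristic $p$ one has $\Stable_{(kG)^*} = \Loc(k)$, i.e.\ $k$ is itself a compact generator of $\Stable_{(kG)^*}$. This is a classical consequence of the fact that $kG$ is a local ring when $G$ is a $p$-group: the trivial representation is the unique simple $kG$-module (equivalently, the unique simple $(kG)^*$-comodule). Hence in the set $\cG_d$ of representatives of dualizable comodules used to build $\Stable_{(kG)^*}$ in \Cref{defn:stable_comod}, every object lies in $\Thick(k)$ (by induction on composition length via the Jordan--H{\"o}lder filtration and the fact that $\Thick(k)$ is closed under extensions), and therefore
\[
\Stable_{(kG)^*} = \Ind(\Thick(\cG_d)) = \Ind(\Thick(k)) = \Loc(k).
\]
Combined with the first part, this gives $\Stable_{(kG)^*} \simeq \Mod_{C^*(BG,k)}$, as required.

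The main obstacle is the identification $R \simeq C^*(BG,k)$ as $\bE_\infty$-ring spectra (rather than merely as graded rings); one needs a genuinely $\infty$-categorical version of the equivalence $\Comod_{(kG)^*} \simeq \Mod_{kG}$ and then an $\bE_\infty$-refinement of the computation of the derived endomorphism algebra of the trivial module. All of this is standard, but writing it out carefully is where the actual work lies; for $G$ a general finite group it produces the Morita equivalence and for $p$-groups the simplicity of $k$ promotes this to an equivalence of the full stable categories.
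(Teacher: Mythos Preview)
Your proposal is correct and follows essentially the same strategy as the paper: apply \Cref{prop:moritacell} to obtain $\Loc(k)\simeq\Mod_R$ with $R=\End_{\Stable_B}(k)$, identify $R$ with $C^*(BG,k)$, and for $p$-groups use that $k$ is the unique simple $kG$-module so that $\Stable_{(kG)^*}$ is already generated by $k$. The only substantive difference is in how the identification $R\simeq C^*(BG,k)$ is justified: the paper simply names it as the Rothenberg--Steenrod construction and points to \cite[Sec.~4]{krausebenson_kg}, whereas you sketch it via the equivalence $\Comod_{(kG)^*}\simeq\Mod_{kG}$ together with a bar-construction or local-systems argument. Both lead to the same place, and indeed the paper's subsequent remark about $\Fun(BG,\Mod_k)$ is exactly the local-systems picture you invoke; your caveat that this $\bE_\infty$-identification is ``where the actual work lies'' is well placed, and citing the Rothenberg--Steenrod construction (or Benson--Krause) is the standard way to discharge it.
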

\begin{proof}
Again letting $R = \End_{\Stable_B}(k)$ we have that $\Loc(k) \simeq \Mod_R$. By the Rothenberg--Steenrod construction we have $R \simeq C^*(BG,k)$, see \cite[Sec.~4]{krausebenson_kg} for example. In the case where $G$ is a finite $p$-group, $\Stable_{(kG)^*}$ is monogenic, so that it is isomorphic to $\Loc(k)$.  
\end{proof}

\begin{rem}
More generally, there is a version of this proposition for compact Lie groups $G$, where the $\infty$-category $\Fun(BG,\Mod_k)$ of $k$-valued local systems on $BG$ plays the role of $\Stable_{kG}$. 
\end{rem}

\begin{prop}\label{prop:Mod_Stable_equiv}
 	Given a specialization closed subset $\mathcal{V}\subseteq \Spec H^*(B,k)$, the functor $\Phi$ restricts to an equivalence of full subcategories
 	\[
 	\xymatrix{\Phi: \Loc_{\Stable_B}(\kos{k}{\frak p}\mid \frak p \in \mathcal{V}) \ar[r]^-{\sim} & \Loc_{\Mod_R}(\kos{R}{\frak p}\mid \frak p \in \mathcal{V}).}
 	\]
\end{prop}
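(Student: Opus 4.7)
The plan is to reduce the statement to a routine consequence of the Morita equivalence $\Phi$ of \eqref{eq:morita_equiv} by checking that $\Phi$ carries the generating Koszul objects on the left to those on the right.

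First, observe that both localizing subcategories in question are automatically contained in the cellular parts: by construction each $\kos{k}{\fp}$ lies in $\Loc_{\Stable_B}(k) = \Stablec$, and $\Loc_{\Mod_R}(\kos{R}{\fp} \mid \fp \in \mathcal{V})$ is by definition a subcategory of $\Mod_R$. Moreover, for any collection of objects $\cS \subseteq \Stablec$ the localizing subcategory $\Loc_{\Stable_B}(\cS)$ coincides with $\Loc_{\Stablec}(\cS)$, since $\Stablec \subseteq \Stable_B$ is closed under all colimits. Thus it suffices to work inside $\Stablec$ and $\Mod_R$ and to use that $\Phi\colon \Stablec \xrightarrow{\sim} \Mod_R$ is a symmetric monoidal equivalence of stable $\infty$-categories.

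The main step is the identification $\Phi(\kos{k}{\fp}) \simeq \kos{R}{\fp}$ for every $\fp \in \Spec H^*(B,k) = \pi_*R$. By \Cref{lem:kos_tensor_decom} the Koszul object $\kos{k}{\fp}$ is a tensor product over $k$ (equivalently, over the unit in $\Stablec$) of cofibers of self-maps $k \xrightarrow{a_i} \Sigma^{d_i} k$ given by homogeneous generators $a_i \in \fp$; the analogous decomposition holds for $\kos{R}{\fp}$ in $\Mod_R$. Since $\Phi$ is symmetric monoidal and exact, and since it identifies $\pi_*^{\Stablec}(k) = H^*(B,k)$ with $\pi_* R$ together with their actions on $k$ and on $R$ respectively, the fiber sequence defining each $\kos{k}{a_i}$ is sent by $\Phi$ to the fiber sequence defining $\kos{R}{a_i}$. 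Taking tensor products yields $\Phi(\kos{k}{\fp}) \simeq \kos{R}{\fp}$ as claimed.

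Finally, since any equivalence of stable $\infty$-categories preserves and reflects colimits, retracts, and cofiber sequences, $\Phi$ carries $\Loc_{\Stablec}(\cS)$ to $\Loc_{\Mod_R}(\Phi(\cS))$ for any collection $\cS$. Applying this with $\cS = \{\kos{k}{\fp} \mid \fp \in \mathcal{V}\}$ and using the identification of the previous paragraph yields the desired equivalence of full subcategories. The only conceptual subtlety is the identification of Koszul objects under $\Phi$, which is essentially forced by naturality of the construction together with the fact that the $\pi_*$-action on $k$ matches the one on $R$; no genuine obstacle arises once the bookkeeping of tensor-product decompositions is in place.
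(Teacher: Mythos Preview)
Your proof is correct and follows essentially the same approach as the paper: both reduce to showing $\Phi(\kos{k}{\fp}) \simeq \kos{R}{\fp}$ by applying the symmetric monoidal exact equivalence $\Phi$ to the defining fiber sequences $k \xrightarrow{a} \Sigma^d k \to \kos{k}{a}$ and then using the tensor decomposition of Koszul objects. Your version merely spells out a few points (e.g., that the localizing subcategory can be computed in $\Stablec$ and that equivalences preserve localizing subcategories) which the paper leaves implicit.
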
 
\begin{proof}
	It suffices to show that $\Phi(\kos{k}{\frak p})\simeq \kos{R}{\frak p}$. Let $a$ be a homogeneous element of degree $d$ in $H^*(B,k)$. Then,
	\[
	\Phi(k \xr{a} \Sigma^{d} k \to \kos{k}{a})=(R \xr{\Phi(a)} \Sigma^{d} R \to \Phi(\kos{R}{a})).
	\]
	Under the isomorphism $H^s(B,k)\cong \pi_{-s}R$, we can identify $\Phi(a)$ with $a$ in degree $-d$. Hence, we get $\Phi(\kos{k}{a})\simeq \kos{R}{a}$. The conclusion follows from the observation that $\kos{k}{\fp} \simeq \kos{k}{a_1} \otimes \cdots \otimes \kos{k}{a_n}$ for $\fp = (a_1,\ldots,a_n)$, together with the fact that $\Phi$ is a symmetric monoidal equivalence.
\end{proof}

When $\cal{V}=\cal{V}(\p)$, we get a more explicit description of the local homology and cohomology functors.

\begin{prop}
	Let $\frak p$ in $\Spec H^*(B,k)$. The local duality contexts
	$(\Stable_B,\Loc(\kos{k}{\frak p}))$  and  $(\Stable_B,\Loc(\kos{k}{\frak q} \mid \frak q \in \cal{V}(\frak p)))$
	are equivalent.
\end{prop}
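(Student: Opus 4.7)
The plan is to reduce this statement to the analogous \Cref{prop:loccomparasion} for ring spectra by transferring along the Morita equivalence $\Phi\colon \Stable_B^{\cell}\xrightarrow{\sim} \Mod_R$ of \eqref{eq:morita_equiv}. It suffices to prove the equality
\[
\Loc_{\Stable_B}(\kos{k}{\fp}) \;=\; \Loc_{\Stable_B}(\kos{k}{\fq}\mid \fq\in \cV(\fp))
\]
of localizing subcategories of $\Stable_B$, since the associated local duality contexts are then tautologically equivalent.

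First I would observe that both sides are generated by objects in $\Stable_B^{\cell} = \Loc(k)$, because the Koszul objects $\kos{k}{\fq}$ are built by finite colimits from copies of $k$ and its shifts; hence both sides are in fact localizing subcategories of $\Stable_B^{\cell}$. Next, I would apply \Cref{prop:Mod_Stable_equiv}: since $\Phi$ sends $\kos{k}{\fq}$ to $\kos{R}{\fq}$ and is an equivalence of stable categories, it induces a bijection between localizing subcategories of $\Stable_B^{\cell}$ and of $\Mod_R$, matching
\[
\Loc_{\Stable_B}(\kos{k}{\fq}\mid \fq\in \cV(\fp)) \;\longleftrightarrow\; \Loc_{\Mod_R}(\kos{R}{\fq}\mid \fq\in \cV(\fp))
\]
and similarly for the singleton case. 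Finally, \Cref{prop:loccomparasion} gives the equality of the two right-hand sides in $\Mod_R$, and pulling back along $\Phi^{-1}$ yields the desired equality in $\Stable_B$.

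I do not anticipate a serious obstacle: the only subtlety worth checking is that $\Phi$, defined as an equivalence $\Stable_B^{\cell}\simeq \Mod_R$, actually preserves the \emph{ambient} localizing subcategory structure — i.e., that the localizing subcategory generated in $\Stable_B^{\cell}$ coincides with the one generated in $\Stable_B$. This is automatic because $\Stable_B^{\cell}$ is itself a localizing subcategory of $\Stable_B$ and the generators already lie in it, so the localizing subcategory they generate is the same whether computed in $\Stable_B^{\cell}$ or in $\Stable_B$. Alternatively, one could bypass the Morita equivalence entirely and mimic the argument of \Cref{prop:loccomparasion} in $\Stable_B$: the inclusion $\supseteq$ would use that $\kos{k}{\fq}$ lies in the thick subcategory of $\kos{k}{\fp}$ whenever $\fp \subseteq \fq$ (a direct analog of \cite[Lem.~6.0.9]{hps_axiomatic}), while the reverse inclusion would follow from the local-to-global principle applied to $L_{\cV(\fp)}\kos{k}{\fp}$. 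Either route works, but the Morita-transfer proof is both shorter and more in keeping with the strategy outlined at the start of the paper.
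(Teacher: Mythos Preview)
Your proof is correct and follows essentially the same approach as the paper, which simply states that the result is immediate from \Cref{prop:Mod_Stable_equiv} and \Cref{prop:loccomparasion}. Your additional care in noting that the localizing subcategories computed in $\Stable_B^{\cell}$ agree with those computed in $\Stable_B$ is a useful elaboration, but the core argument is identical.
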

\begin{proof}
	This is immediate in light of \cref{prop:Mod_Stable_equiv} and \cref{prop:loccomparasion}.
\end{proof}

Likewise, \cref{prop:Mod_Stable_equiv} together with \cref{cor:formulas} yield: 
\begin{prop}
	Let $\frak p=(p_1, p_2, \dots p_n)$ in $\Spec H^*(B,k)$. For every $M\in \Stablec$, there are natural equivalences 
	\[
\Gamma_{\cV(\frak p)}M \simeq \colim_s \Sigma^{-sd-n} \kos{k}{\frak p^s} \otimes M \quad \text{and} \quad \Lambda^{\cV(\frak p)}M \simeq \lim_s  \kos{k}{\frak p^s} \otimes M,
	\]
where $d = |p_1|+\cdots + |p_n|$ and the objects $\kos{k}{\frak p^s}$, $s\geq1$, are defined as in \Cref{sec:ringspectra}.
\end{prop}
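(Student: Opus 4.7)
The plan is to transport the formulas of \Cref{cor:formulas}, which are already proved in $\Mod_R$, along the symmetric monoidal equivalence $\Phi\colon \Stablec \xrightarrow{\sim} \Mod_R$ from \eqref{eq:morita_equiv}. By \Cref{prop:Mod_Stable_equiv}, $\Phi$ restricts to an equivalence between $\Loc_{\Stablec}(\kos{k}{\frak q}\mid \frak q\in\cV(\frak p))$ and $\Loc_{\Mod_R}(\kos{R}{\frak q}\mid \frak q\in\cV(\frak p))$, so the two local duality contexts correspond under $\Phi$. Since the quadruple $(\Gamma_{\cV},L_{\cV},\Delta^{\cV},\Lambda^{\cV})$ produced by \Cref{thm:abstractlocalduality} is canonically determined by its local duality context, $\Phi$ intertwines the local cohomology and local homology functors of the two contexts.

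Next, I would verify that $\Phi(\kos{k}{\frak p^s}) \simeq \kos{R}{\frak p^s}$ for every $s\ge 1$. The proof of \Cref{prop:Mod_Stable_equiv} already shows that $\Phi(\kos{k}{a}) \simeq \kos{R}{a}$ for any homogeneous $a$, using that $\Phi$ is exact and that it identifies $a\in H^*(B,k)$ with the corresponding element of $\pi_{-*}R$. Since $\Phi$ is symmetric monoidal, combining this observation with \Cref{lem:kos_tensor_decom} applied in both $\Stablec$ and $\Mod_R$ yields the required identification of the iterated Koszul objects.

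Finally, since $\Phi$ is an equivalence of stable $\infty$-categories, it preserves arbitrary colimits, limits, suspensions, and the monoidal product. Applying \Cref{cor:formulas} to $\Phi(M)\in\Mod_R$ therefore gives
\[
\Phi(\Gamma_{\cV(\frak p)}M)\simeq \colim_s \Sigma^{-sd-n}\kos{R}{\frak p^s}\otimes \Phi(M) \quad\text{and}\quad \Phi(\Lambda^{\cV(\frak p)}M)\simeq \lim_s \kos{R}{\frak p^s}\otimes \Phi(M),
\]
and transporting back along $\Phi^{-1}$ produces the desired equivalences in $\Stablec$. There is no substantive obstacle: the statement is essentially a transport of structure, and every ingredient — the identification of Koszul objects, the invariance of the local duality quadruple under the equivalence, and the preservation of (co)limits — is immediate from results already established for ring spectra.
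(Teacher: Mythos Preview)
Your proposal is correct and follows exactly the approach the paper takes: the paper simply states that the result follows from \Cref{prop:Mod_Stable_equiv} together with \Cref{cor:formulas}, and you have faithfully unpacked what that transport of structure entails.
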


Another consequence of \eqref{eq:morita_equiv} is the existence of spectral sequences computing the cohomology of the different local cohomology and local homology functors, which converge for cellular objects in $\Stable_B$. 
\begin{prop}\label{prop:homotopy_ss_comod}
	Let $\frak p$ in $\Spec H^*(B,k)$ and $M\in \Stablec$. There are strongly convergent spectral sequences of $H^*(B,k)$-modules:
	\begin{enumerate}
		\item $E^2_{s,t}=(H^{s}_{\frak p}(H^*(B,M)))_{t} \implies H^{s+t}(\Gamma_{\cV(\frak p)} M)$, with differentials $d^r\colon E^r_{s,t} \to E^r_{s-r,t+r-1}$,
		\item $E^2_{s,t}=(H_{-s}^{\frak p}(H^*(B,M)))_{t} \implies H^{s+t}(\Lambda^{\cV(\frak p)} M)$, with differentials $d^r\colon E^r_{s,t} \to E^r_{s-r,t+r-1}$, and
		\item $E^2_{s,t}=(\cH^{s}_{\frak p}(H^*(B,M)))_{t} \implies H^{s+t}(L_{\cV(\frak p)} M)$, with differentials $d^r\colon E^r_{s,t} \to E^r_{s-r,t+r-1}$.
	\end{enumerate}
\end{prop}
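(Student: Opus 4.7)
The plan is to transfer the three spectral sequences of \Cref{prop:homotopy_ss} from $\Mod_R$ to $\Stablec$ along the symmetric monoidal equivalence
\[
\Phi \colon \Stablec \xrightarrow{\sim} \Mod_R
\]
supplied by \Cref{prop:moritacell}, where $R = \End_{\Stable_B}(k)$ is the $\bE_{\infty}$-ring spectrum characterised by $\pi_{-*}R \cong H^*(B,k)$.

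The first step is to verify that $\Phi$ commutes with the local cohomology, local homology, and $L_{\cV(\fp)}$-functors. By \Cref{prop:Mod_Stable_equiv}, $\Phi$ sends the generators $\kos{k}{\fp}$ of the torsion subcategory on the left to the corresponding generators $\kos{R}{\fp}$ on the right, so it identifies the two local duality contexts $(\Stablec, \Loc(\kos{k}{\fp}))$ and $(\Mod_R, \Loc(\kos{R}{\fp}))$. Since the quadruple $(\Gamma_{\cV(\fp)}, L_{\cV(\fp)}, \Delta^{\cV(\fp)}, \Lambda^{\cV(\fp)})$ is determined functorially by the local duality context (see \Cref{thm:abstractlocalduality}), this yields natural equivalences $\Phi \circ \Gamma_{\cV(\fp)} \simeq \Gamma_{\cV(\fp)} \circ \Phi$, and likewise for $L_{\cV(\fp)}$ and $\Lambda^{\cV(\fp)}$.

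The second step is to apply the three spectral sequences of \Cref{prop:homotopy_ss} to the $R$-module $\Phi(M)$; strong convergence is inherited automatically. The third and final step is to rewrite the statements in cohomological notation using the tautological isomorphism
\[
\pi_{-*}\Phi(M) \cong \pi_{-*}\Hom_{\Stable_B}(k,M) \cong H^*(B,M)
\]
of graded $H^*(B,k)$-modules. Under this identification, the algebraic local cohomology, local homology, and \v{C}ech cohomology groups of $\pi_*\Phi(M)$ become the corresponding functors applied to $H^*(B,M)$ after the relabeling dictated by the sign flip between homotopical and cohomological grading, and the abutments $\pi_{s+t}$ of the local functors applied to $\Phi(M)$ become $H^{s+t}$ of the local functors applied to $M$.

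The only substantive step is the first one, namely the compatibility of $\Phi$ with the local duality functors. It ultimately rests on the fact that $\Phi$ is a symmetric monoidal equivalence matching the relevant Koszul objects, together with the smashing property of $\Gamma_{\cV(\fp)}$ and $L_{\cV(\fp)}$ (\Cref{thm:localduality} and \Cref{thm:cellular_functors}) and adjunction for their respective partners $\Lambda^{\cV(\fp)}$ and $\Delta^{\cV(\fp)}$; everything else is bookkeeping with the gradings.
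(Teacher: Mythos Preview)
Your proof is correct and follows the same approach as the paper, which simply says the result follows from applying the equivalence \eqref{eq:morita_equiv} to the spectral sequences of \Cref{prop:homotopy_ss}. You have spelled out in more detail why the equivalence $\Phi$ intertwines the local duality functors on both sides (via \Cref{prop:Mod_Stable_equiv} and functoriality of the local duality context), which the paper leaves implicit.
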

\begin{proof}
	The result follows from applying the equivalence \eqref{eq:morita_equiv} to the spectral sequences in \cref{prop:homotopy_ss}.
\end{proof}

\begin{rem}
	We note that constructing these spectral sequences directly via a filtration of the Koszul complex is possible, though determining convergence from this perspective is difficult. One advantage of our approach is that the convergence of the spectral sequence is immediate. 
\end{rem}

Next, observe that the equivalence in (\ref{eq:morita_equiv}) gives the analogous result to \Cref{prop:hpsplocal}.
\begin{lem}
For any $\frak p \in \Spec H^*(B,k)$ there exists a smashing localization $L_{\frak p}$ such that for all $M$ in $\Stablec$ there is an isomorphism $H^*(B,L_{\fp}M) \cong H^*(B,M)_{\frak p}$.
\end{lem}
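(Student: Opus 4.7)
The plan is to transport the smashing localization $L_{\frak p}$ constructed for module spectra in \Cref{prop:hpsplocal} across the symmetric monoidal Morita equivalence $\Phi\colon \Stablec \xrightarrow{\sim} \Mod_R$ of \eqref{eq:morita_equiv}, where $R = \End_{\Stable_B}(k)$. By construction, $\pi_{-\ast}R \cong H^\ast(B,k)$, so a prime $\frak p \in \Spec H^\ast(B,k)$ corresponds to a homogeneous prime of $\pi_\ast R$, and \Cref{prop:hpsplocal} produces a smashing localization on $\Mod_R$, which I will denote $L^{\mathrm{alg}}_{\frak p}$, satisfying $\pi_\ast(L^{\mathrm{alg}}_{\frak p} N) \cong (\pi_\ast N)_{\frak p}$ for all $N \in \Mod_R$.

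First I would define $L_{\frak p}\colon \Stablec \to \Stablec$ as the conjugate $\Phi^{-1} \circ L^{\mathrm{alg}}_{\frak p} \circ \Phi$. Since $\Phi$ is a symmetric monoidal equivalence of stable categories and $L^{\mathrm{alg}}_{\frak p}$ is a localization functor, $L_{\frak p}$ is likewise a localization; and because $L^{\mathrm{alg}}_{\frak p}$ is smashing, i.e.\ $L^{\mathrm{alg}}_{\frak p}(N) \simeq N \otimes_R L^{\mathrm{alg}}_{\frak p}(R)$, the equivalence $\Phi$ (which sends the unit to $R$ and intertwines the monoidal structures) transports this to a natural equivalence $L_{\frak p}(M) \simeq M \otimes L_{\frak p}(k)$ in $\Stablec$. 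Hence $L_{\frak p}$ is smashing.

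For the identification on cohomology, recall that for any $M \in \Stablec$ we have $H^\ast(B,M) = \pi_{-\ast}\Hom_{\Stable_B}(k,M) = \pi_{-\ast}\Phi(M)$. Therefore
\[
H^\ast(B, L_{\frak p} M) = \pi_{-\ast}\Phi(L_{\frak p}M) = \pi_{-\ast}(L^{\mathrm{alg}}_{\frak p}\Phi(M)) \cong (\pi_{-\ast}\Phi(M))_{\frak p} = H^\ast(B,M)_{\frak p},
\]
where the middle isomorphism is the defining property of $L^{\mathrm{alg}}_{\frak p}$ from \Cref{prop:hpsplocal}, applied to $\Phi(M) \in \Mod_R$.

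There is no real obstacle here: the work has already been done at the level of $\Mod_R$, and the only subtlety is checking that the conjugation by the symmetric monoidal equivalence $\Phi$ preserves the smashing property. This is a formal consequence of $\Phi$ being symmetric monoidal and sending the unit $k$ to $R$, together with the fact that homotopy groups of modules over $R$ correspond to $\Ext$-groups in $\Stable_B$ under $\Phi$.
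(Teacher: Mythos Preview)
Your proof is correct and follows exactly the approach the paper intends: the lemma is stated there as an immediate consequence of the Morita equivalence \eqref{eq:morita_equiv} applied to \Cref{prop:hpsplocal}, and you have simply written out the details of that transport. There is nothing to add.
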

Let us write $\Stablec_\p$ for the essential image of $L_{\fp}$. Given $M\in\Stablec$, we will write $M_{\frak p}$ for $L_{\frak p}M$ to ease notation. 
\begin{lem}\label{lem:plocmorita}
The category $\Stablec_{\fp}$ is equivalent to $\Mod_{R_{\fp}}$.
\end{lem}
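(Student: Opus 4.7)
The plan is to transfer the statement across the Morita equivalence $\Phi\colon \Stablec \xrightarrow{\sim} \Mod_R$ from \eqref{eq:morita_equiv}. Since $\Phi$ is a symmetric monoidal equivalence of stable categories, it sends smashing localizations to smashing localizations, and the essential images correspond bijectively.

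First, I would verify that under $\Phi$ the localization $L_{\fp}$ on $\Stablec$ of the preceding lemma agrees with the localization $L_{\fp}$ on $\Mod_R$ of \Cref{prop:hpsplocal}. In fact, the existence of $L_{\fp}$ on $\Stablec$ is most naturally obtained by pulling back the localization $L_{\fp}$ on $\Mod_R$ along $\Phi$, using the isomorphism $H^*(B,M) \cong \pi_{-*}\Phi(M)$ of graded $R_*$-modules. Alternatively, one notes that a smashing localization is determined by its unit, and both localizations are characterized (via the local-to-global principle of \Cref{thm:hps_bousfield}) by the Bousfield class of $\{\kos{k}{\fq} \mid \fq \not\subseteq \fp\}$, respectively $\{\kos{R}{\fq} \mid \fq \not\subseteq \fp\}$. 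These classes correspond under $\Phi$ by \Cref{prop:Mod_Stable_equiv}, so the two smashing localizations are intertwined by $\Phi$.

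Once we know $\Phi \circ L_{\fp} \simeq L_{\fp} \circ \Phi$, the essential images correspond: $\Phi$ restricts to an equivalence $\Stablec_{\fp} \simeq \Mod_{R_{\fp}}$, where the right-hand side is by definition the essential image of $L_{\fp}$ on $\Mod_R$. The hardest step is the compatibility of the two localizations; the rest is formal.
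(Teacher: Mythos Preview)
Your argument is correct, but it differs from the paper's. The paper re-applies derived Morita theory directly to the localized category $\Stablec_{\fp}$: it observes that $\Stablec_{\fp}=\Loc(k_{\fp})$ is compactly generated by the single object $k_{\fp}$, computes its endomorphism ring via $\pi_*\Hom_{\Stablec_{\fp}}(k_{\fp},k_{\fp})\cong\pi_*\Hom_{\Stablec}(k,k_{\fp})\cong(\pi_*R)_{\fp}\cong\pi_*R_{\fp}$, and concludes $\Stablec_{\fp}\simeq\Mod_{R_{\fp}}$ by Schwede--Shipley. You instead leverage the ambient equivalence $\Phi\colon\Stablec\xrightarrow{\sim}\Mod_R$ and argue that the two smashing localizations $L_{\fp}$ are intertwined by $\Phi$ (since their acyclics $\Loc(\kos{k}{\fq}\mid\fq\in\cZ(\fp))$ and $\Loc(\kos{R}{\fq}\mid\fq\in\cZ(\fp))$ correspond under $\Phi$ by \Cref{prop:Mod_Stable_equiv}), so that $\Phi$ restricts to the essential images. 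The paper's route is slightly quicker and avoids checking compatibility of localizations, while your route has the advantage of showing \emph{explicitly} that the resulting equivalence is the restriction of $\Phi$; this is exactly what is needed for the next result, \Cref{prop:gammap}, whose proof simply cites \Cref{prop:Mod_Stable_equiv} and \Cref{lem:plocmorita} together.
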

\begin{proof}
This is an application of derived Morita theory. First, it is easy to see that $\Stablec_{\fp} \simeq \Loc(k_{\fp})$. Now note that $\pi_*\Hom_{\Stablec_\fp}(k_{\fp},k_{\fp}) \cong \pi_*\Hom_{\Stablec}(k,k_{\fp}) \cong (\pi_*R)_{\fp}\cong \pi_*R_\p$ and the result follows. 
\end{proof}

In analogy with ring spectra, we introduce the definitions below where once again we write $\Delta^{\fp}$ for the right adjoint to $L_{\fp}$.  
\begin{defn}\label{def:pcohomologystable}
	For $\frak p \in \Spec H^*(B,k)$ and $M \in \Stablec$ define the $\frak p$-local cohomology and homology functors by $\Gamma_{\frak p}M = \Gamma_{\cV(\frak p)}M_{\frak p}$ and $\Lambda^{\frak p}M = \Lambda^{\cV(\frak p)}\Delta^{\frak p}M$, respectively. 
\end{defn}
We let $\Stablec_\fp^{\fp-\text{tors}}$ denote the essential image of $\Gamma_{\fp}$. Combining \Cref{prop:Mod_Stable_equiv} and \Cref{lem:plocmorita} we get the following. 
\begin{prop}\label{prop:gammap}
The functor $\Phi \colon \Stablec \xr{\simeq} \Mod_R$ restricts to an equivalence $\Stablec_{\fp}^{\fp-\text{tors}} \xr{\simeq} \Mod_{R_{\fp}}^{\fp-\text{tors}}$.
\end{prop}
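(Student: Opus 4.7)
The plan is to leverage the Morita-type equivalences and the explicit Koszul descriptions of the torsion subcategories that have already been set up on both sides. By \Cref{lem:plocmorita}, $\Phi$ restricts to an equivalence $\Stablec_{\fp} \xrightarrow{\sim} \Mod_{R_{\fp}}$, so it suffices to identify the $\fp$-torsion subcategory on each side as a localizing subcategory of the $\fp$-local category generated by a single Koszul object, and then check that $\Phi$ matches up these generators.

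First, on the ring spectrum side, \Cref{lem:genplocptors} identifies $\Mod_{R_{\fp}}^{\fp-\text{tors}}$ with $\Loc_{\Mod_{R_{\fp}}}(\kos{R_{\fp}}{\fp})$. I would establish the analogous statement for $\Stablec$: by \Cref{def:pcohomologystable} and the same argument as in \Cref{lem:genplocptors} (which is purely formal given the local duality context $(\Stablec_{\fp},\Loc(\kos{k_{\fp}}{\fp}))$ and the observation that $(\kos{k}{\fp})_{\fp} \simeq \kos{k_{\fp}}{\fp}$), we obtain $\Stablec_{\fp}^{\fp-\text{tors}} \simeq \Loc_{\Stablec_{\fp}}(\kos{k_{\fp}}{\fp})$. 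This reduces the proposition to the claim that $\Phi$ restricts to an equivalence between these two localizing subcategories.

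Next, I would verify that $\Phi$ intertwines the Koszul generators. The computation in the proof of \Cref{prop:Mod_Stable_equiv} shows that $\Phi(\kos{k}{\fp}) \simeq \kos{R}{\fp}$, using that $\Phi$ is symmetric monoidal and identifies $H^*(B,k)$ with $\pi_{-*}R$. Since $\Phi$ is compatible with the smashing localizations $L_{\fp}$ on both sides (these are constructed from the same local duality framework, transferred along $\Phi$), it follows that $\Phi(\kos{k_{\fp}}{\fp}) \simeq \kos{R_{\fp}}{\fp}$ under the identification $\Stablec_{\fp} \simeq \Mod_{R_{\fp}}$.

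Finally, because $\Phi\colon \Stablec_{\fp} \xrightarrow{\sim} \Mod_{R_{\fp}}$ is an equivalence of presentable stable $\infty$-categories and therefore preserves and reflects colimits and retracts, it carries the localizing subcategory generated by $\kos{k_{\fp}}{\fp}$ onto the localizing subcategory generated by $\kos{R_{\fp}}{\fp}$, proving the desired restricted equivalence. The only mildly non-trivial point is the bookkeeping step that the Koszul description of the essential image of $\Gamma_{\fp}$ passes through the Morita equivalence; once this is in hand, everything else is formal.
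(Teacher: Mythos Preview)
Your proof is correct and follows essentially the same approach as the paper, which simply says the result follows by combining \Cref{prop:Mod_Stable_equiv} and \Cref{lem:plocmorita}. You have spelled out in more detail what the paper leaves implicit: identifying the $\fp$-torsion subcategories as localizing subcategories generated by the appropriate Koszul objects (via \Cref{lem:genplocptors} and its analogue on the $\Stablec$ side), and then using that $\Phi$ matches these generators and is compatible with the $\fp$-localizations.
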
  

\Cref{prop:homotopy_ss_local} translates into the spectral sequence below. 
\begin{prop}\label{prop:localcohomssnew}
Let $\fp \in \Spec H^*(B,k)$ be a homogeneous ideal. For any $M \in \Stablec$ there exists a strongly convergent spectral sequences of $H^*(B,k)$-modules 
	\[
	E^2_{s,t}\cong(H^{s}_{\frak p}H^*(B,M)_{\frak p})_{t} \implies H^{s+t}(B,\Gamma_{\frak p} M),\] with differentials $d^r\colon E^r_{s,t} \to E^r_{s-r,t+r-1}.$
\end{prop}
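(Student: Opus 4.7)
The plan is to transport the already established spectral sequence of \Cref{prop:homotopy_ss_local}(1) across the Morita equivalence of \eqref{eq:morita_equiv} and then reindex according to the cohomological grading convention $H^i(B,-) = \pi_{-i}\Phi(-)$. First I would record that, under the symmetric monoidal equivalence $\Phi\colon \Stablec \xrightarrow{\sim} \Mod_R$, the local duality data on each side are matched: \Cref{prop:Mod_Stable_equiv} identifies $\Stablec^{\cV(\fp)\text{-tors}}$ with $\Loc_{\Mod_R}(\kos{R}{\fq}\mid \fq \in \cV(\fp))$, the smashing localization $L_{\fp}$ on $\Stablec$ corresponds to the one on $\Mod_R$ from \Cref{prop:hpsplocal}, and \Cref{prop:gammap} then upgrades this to an equivalence $\Phi(\Gamma_{\fp}M) \simeq \Gamma_{\fp}\Phi(M)$ (and similarly after restricting to the $\fp$-local/$\fp$-torsion subcategory).

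Next I would apply \Cref{prop:homotopy_ss_local}(1) to the $R$-module $N = \Phi(M)$, producing a strongly convergent spectral sequence
\[
\widetilde{E}^2_{u,v} \cong \bigl(H^{-u}_{\fp}(\pi_*\Phi(M))_{\fp}\bigr)_{v} \implies \pi_{u+v}(\Gamma_{\fp}\Phi(M))
\]
with $d^r\colon \widetilde{E}^r_{u,v}\to \widetilde{E}^r_{u-r,v+r-1}$. To rewrite this in the stated cohomological form, set $s=-u$ and $t=-v$, so that the differential becomes $d^r\colon E^r_{s,t}\to E^r_{s-r,t+r-1}$ as required. Since $\pi_j\Phi(-)=H^{-j}(B,-)$ and localization commutes with $H^*(B,-)$ by the lemma preceding \Cref{lem:plocmorita}, one has $(\pi_*\Phi(M))_{\fp} = H^*(B,M)_{\fp}$, and the degree $v$ part of a graded module equals the degree $-v = t$ part once we pass to cohomological internal grading. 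This rewrites the $E^2$-page as $(H^s_{\fp}H^*(B,M)_{\fp})_t$, while the abutment becomes $\pi_{-(s+t)}(\Gamma_{\fp}\Phi(M)) = H^{s+t}(B,\Gamma_{\fp}M)$ using the identification from the previous paragraph.

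The only nontrivial bookkeeping is the sign-reversal in the grading, which is why I would carry it out explicitly; strong convergence and naturality are inherited directly from \Cref{prop:homotopy_ss_local}(1) because $\Phi$ is an exact equivalence of stable $\infty$-categories. No new analytical input is required, so the main obstacle is purely cosmetic: making sure both the cohomological index $s$ on the local cohomology and the internal degree $t$ line up with the homological convention used in \Cref{sec:ringspectra}.
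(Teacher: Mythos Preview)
Your approach is correct and matches the paper's: the proposition is stated immediately after the sentence ``\Cref{prop:homotopy_ss_local} translates into the spectral sequence below,'' with no further proof, so transporting \Cref{prop:homotopy_ss_local}(1) along the Morita equivalence $\Phi$ and reindexing is exactly what is intended. One small bookkeeping slip: under the substitution $s=-u$, $t=-v$ the differential actually becomes $d^r\colon E^r_{s,t}\to E^r_{s+r,t-r+1}$, not $E^r_{s-r,t+r-1}$; the paper itself records the same differential pattern in both propositions and is not careful about this sign, so it does not affect the argument.
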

\begin{rem}
This spectral sequence is a generalization of a spectral sequence due to Greenlees--Lyubeznik \cite{green_lyub} and Benson \cite{benson_moduleswithinjcohom} when $B = (kG)^*$ for $G$ a finite group and $M = k$.
\end{rem}
\tabularnewline
\begin{rem}
	If we extend the definition of $\Gamma_{\fp}$ to all $M \in \Stable_B$ via $\Gamma_{\fp}M = \Gamma_{\fp} k \otimes M$, then we can make sense of this spectral sequence for any $M \in \Stable_B$. First note that since $\Cell$ is right adjoint to a symmetric monoidal functor, it is lax symmetric monoidal. This implies that for any cellular object $N$ and arbitrary $M$, there is a natural map
	\[
N \otimes \Cell(M) \to \Cell(N \otimes M).
	\]
The collection of $N$ for which this is an equivalence is a localizing subcategory containing $k$, and so contains all cellular objects. Since $\Gamma_{\fp} k$ is always cellular, we see that 
\[
\Cell(\Gamma_{\fp}M) \simeq \Cell(\Gamma_{\fp}k \otimes M) \simeq \Gamma_{\fp}k \otimes \Cell(M) \simeq \Gamma_{\fp}\Cell(M).
\] 
But for any $M \in \Stable_B$ there is an isomorphism $H^*(B,\Cell(M)) \cong H^*(B,M)$ (see the proof of \Cref{lem:bousfield} for example), so that the equivalence above gives the spectral sequence for all objects of $\Stable_B$. We thank the referee for this observation. 
\end{rem}

Finally, we proceed to compare our functors with the ones constructed in \cite{benson_local_cohom_2008} and \cite{benson_colocalizing_2012}. Let $Z\in \Stable_B^{\omega}$ be a compact object. For all $X\in \Stable_B$ set $H^*_Z(B,X)= \pi_*\Hom_{\Stable_B}(Z,X)$ to be the cohomology of $X$ with respect to $Z$.

 For any specialization closed subset $\cV \subseteq \Spec H^*(B,k)$, Benson, Iyengar, and Krause construct a localization functor on $\Stable_B$ (their parallel to our $L_{\cV}$) whose kernel is precisely 
	\[
T_{\cV} = \{X \in \Stable_B \mid H_Z^*(B,X)_{\frak p} = 0 \text{ for all } \frak p \in \Spec H^*(B,k) \setminus \cV, \ Z\in \Stable_B^{\omega} \}.
	\]
We have the following comparison result. 
\begin{prop}
	For any specialization closed set $\mathcal{V}$, there is an equivalence of categories between $\Loc(k)\cap T_{\cV}$ and $\Loc(\kos{k}{\frak p} \mid \frak p \in \cal{V})$.
\end{prop}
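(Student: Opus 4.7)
The plan is to reduce the claim to its ring-spectrum analog, Proposition \ref{prop:bikcomparespecial}, by transporting everything across the Morita equivalence $\Phi \colon \Stablec \xrightarrow{\sim} \Mod_R$ of \eqref{eq:morita_equiv}. Proposition \ref{prop:Mod_Stable_equiv} identifies $\Loc(\kos{k}{\fp} \mid \fp \in \cV)$ with $\Loc_{\Mod_R}(\kos{R}{\fp} \mid \fp \in \cV)$, which Proposition \ref{prop:bikcomparespecial} further identifies with
\[
T_{\cV}^{\Mod_R} = \{X \in \Mod_R \mid (\pi_*X)_{\fp}=0 \text{ for all } \fp \notin \cV\}.
\]
So it suffices to show that, for $M \in \Loc(k) = \Stablec$, one has $M \in T_{\cV}$ if and only if $\Phi(M) \in T_{\cV}^{\Mod_R}$.

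The forward direction should be immediate: since $k$ is a compact object of $\Stable_B$, specializing the defining vanishing condition of $T_{\cV}$ to $Z = k$ yields $H^*(B,M)_{\fp} \cong \pi_*\Phi(M)_{\fp} = 0$ for every $\fp \notin \cV$.

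For the converse, I would first rewrite the hypothesis as $M \in \Loc(\kos{k}{\fp} \mid \fp \in \cV)$ via Propositions \ref{prop:Mod_Stable_equiv} and \ref{prop:bikcomparespecial}. Since $T_{\cV}$ is a localizing subcategory of $\Stable_B$, it then suffices to verify that each generator $\kos{k}{\fq}$ with $\fq \in \cV$ lies in $T_{\cV}$, i.e., that $H^*_Z(B,\kos{k}{\fq})_{\fp} = 0$ for every compact $Z \in \Stable_B^{\omega}$ and every $\fp \notin \cV$. Fix such $\fq$ and $\fp$. Specialization closedness of $\cV$ forces $\fq \not\subseteq \fp$, so one can pick an element $a \in \fq \setminus \fp$. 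Multiplication by $a$ on the Koszul object $\kos{k}{a}$ is nullhomotopic by construction (it is the cofiber of that very map), and using the tensor decomposition $\kos{k}{\fq} \simeq \kos{k}{a}\otimes\kos{k}{a_2}\otimes \cdots \otimes \kos{k}{a_n}$ (the analog of Lemma \ref{lem:kos_tensor_decom} in $\Stablec$), the $a$-multiplication endomorphism of $\kos{k}{\fq}$ itself is nullhomotopic. It follows that $a$ acts as zero on $H^*_Z(B,\kos{k}{\fq})$ for every $Z$, and since $a \notin \fp$, the $\fp$-localization of this $H^*(B,k)$-module vanishes, as desired.

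The main subtlety is to confirm that the $H^*(B,k)$-module structure on $H^*_Z(B,\kos{k}{\fq})$ that is implicit in Benson--Iyengar--Krause's definition of $T_{\cV}$ really is the one induced by postcomposition with the $a$-multiplication endomorphism of $\kos{k}{\fq}$; this is built into the central action they use, but worth recording explicitly so that the Koszul-nullhomotopy argument unambiguously produces the required annihilation.
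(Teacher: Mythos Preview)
Your argument is correct and follows the same overall reduction as the paper---via the Morita equivalence $\Phi$ and Proposition~\ref{prop:bikcomparespecial}---but treats the nontrivial implication differently. One small inaccuracy: it is not in general true that $a$ itself acts nullhomotopically on $\kos{k}{a}$ (think of multiplication by $2$ on the mod-$2$ Moore spectrum); what holds is that some power of $a$, in fact $a^2$, acts nullhomotopically. Since $a\notin\fp$, this still forces $H^*_Z(B,\kos{k}{\fq})_{\fp}=0$, so the argument goes through unchanged after this correction.

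For the converse direction the paper takes a different, more conceptual route. Rather than checking the generators $\kos{k}{\fq}$, it shows directly that for \emph{any} cellular $X$ the single vanishing condition $H^*(B,X)_{\fp}=0$ already forces $H^*_Z(B,X)_{\fp}=0$ for every compact $Z$: since $H^*(B,X)_{\fp}\cong H^*(B,X_{\fp})$ and $X_{\fp}$ is cellular, the hypothesis gives $X_{\fp}\simeq 0$, whence $H^*_Z(B,X)_{\fp}\cong H^*_Z(B,X_{\fp})=0$ using compactness of $Z$. This identifies $\Loc(k)\cap T_{\cV}$ with $\{X\in\Loc(k): H^*(B,X)_{\fp}=0 \text{ for all }\fp\notin\cV\}$, which is exactly the $\Phi$-preimage of $T_{\cV}^{\Mod_R}$. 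Your generator-by-generator check is more elementary and avoids invoking $L_{\fp}$ on $\Stablec$; the paper's argument, on the other hand, explains structurally why cellularity collapses the whole family of test objects $Z$ down to the single object $k$, and implicitly uses the same compatibility of the $H^*(B,k)$-action that you flag as a subtlety.
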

\begin{proof}
Let $X$ be a cellular object. Note that if $H^*(B,X)_{\frak p}\cong H^*(B,X_{\frak p})=0$, then $X_{\frak p}\simeq 0$. Therefore, $H^*(B,X)_{\frak p}=0$ implies that $H^*_Z(B,X)_{\frak p}=0$ for all $Z\in \Stable_B^{\omega}$. The statement then follows from \eqref{eq:morita_equiv} and \cref{prop:bikcomparespecial}.
\end{proof}

We finish by observing that the same proof of \cref{cor:bikcompare} yields a comparison of these functors to those constructed by Benson, Iyengar, and Krause. 
\begin{cor}
	The functors $\Gamma_{\frak p}$ and $\Lambda^{\frak p}$ agree with the restriction of the equally denoted functors constructed by Benson, Iyengar, and Krause in \cite{benson_local_cohom_2008} and \cite{benson_colocalizing_2012} to the subcategory of cellular objects. 
\end{cor}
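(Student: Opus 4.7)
The plan is to mimic the argument of \Cref{cor:bikcompare}, transferring it through the Morita equivalence $\Phi\colon \Stablec \xrightarrow{\sim} \Mod_R$ of \eqref{eq:morita_equiv} and using the compatibility already established in the previous proposition. Since all four functors in question are constructed from localizations/colocalizations that are determined by their kernels, matching the kernels will be enough.

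First I would handle the localization $L_\fp$ on $\Stablec$. By \Cref{prop:hpsplocal} (applied via \Cref{lem:plocmorita}), $L_\fp$ is the smashing localization with category of acyclics the localizing subcategory $T_{\cal{Z}(\fp)}$, where $\cal{Z}(\fp) = \{\fq \in \Spec H^*(B,k) \mid \fq \not\subseteq \fp\}$. The preceding proposition, comparing $\Loc(k)\cap T_{\cV}$ with $\Loc(\kos{k}{\fp}\mid \fp\in \cV)$, then identifies this with the restriction to $\Stablec$ of the BIK localization $L_{\cal{Z}(\fp)}$. Similarly, for any specialization closed subset $\cV$, the functor $\Gamma_{\cV}$ on $\Stablec$ agrees with the restriction of the BIK functor of the same name, since both are the colocalization onto the subcategory matched by that proposition.

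Next, for $\Gamma_\fp$: by \Cref{def:pcohomologystable} we have $\Gamma_\fp = \Gamma_{\cV(\fp)} L_\fp$. Both factors coincide with the restriction of the BIK constructions by the previous step, and the composite equals the BIK $\Gamma_\fp$ on the nose by their definition. This settles the claim for local cohomology.

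Finally, for $\Lambda^\fp$: Benson, Iyengar, and Krause define $\Lambda^\fp$ in \cite{benson_colocalizing_2012} as the right adjoint of $\Gamma_\fp$. The lemma preceding \Cref{lem:genplocptors}, transported across $\Phi$, shows that our $\Lambda^\fp = \Lambda^{\cV(\fp)}\Delta^\fp$ is right adjoint to our $\Gamma_\fp$ as endofunctors of $\Stablec$. Since adjoints are unique up to canonical natural equivalence, and we have just shown the two versions of $\Gamma_\fp$ agree on cellular objects, it follows that the two versions of $\Lambda^\fp$ agree on cellular objects as well. The one subtlety to check is that the restriction of the BIK right adjoint to $\Stablec$ really is the right adjoint of the restricted $\Gamma_\fp$; this is automatic because the BIK $\Gamma_\fp$ takes values in $\Stablec^{\fp-\text{tors}}\subseteq \Stablec$, so the counit/unit equations restrict. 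The only mild obstacle is bookkeeping these compatibilities cleanly, but no new input beyond \Cref{prop:Mod_Stable_equiv}, \Cref{prop:gammap}, and the preceding proposition is needed.
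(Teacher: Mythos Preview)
Your proposal is correct and follows exactly the same approach as the paper, which simply records that the proof of \Cref{cor:bikcompare} carries over; you have merely spelled out the steps in more detail than the paper does. One small caveat: your resolution of the final subtlety is not quite right as stated---the fact that the BIK $\Gamma_\fp$ lands in $\Stablec$ does not by itself force the BIK right adjoint $\Lambda^\fp$ to preserve cellular objects, which is what is needed for the restricted adjunction to exist on $\Stablec$; what your adjunction computation actually yields is $\Lambda^\fp M \simeq \Cell(\text{BIK }\Lambda^\fp M)$ for cellular $M$. The paper does not address this point either, so this is a shared looseness rather than a defect of your argument relative to the paper's.
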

\subsection{The \Gorenstein condition for $\Stable_B$}
The purpose of this short section is to show that if the subcategory of cellular objects in $\Stable_B$ satisfies the \Gorenstein condition, then, in a sense, so does $\Stable_B$ (and in fact this holds in any stable category). For clarity, let us write $R = \Hom_{\Stable_B}(k,k)$, so that $\pi_{-*}R \cong H^*(B,k)$, and the subcategory of cellular objects in $\Stable_B$ is equivalent to $\Mod_R$. 

We first construct the analogue of the objects $T_R(I)$ of \Cref{sec:gorenstein}. Using Brown representability, for each injective $H^*(B,k)$-module $I$ there is an object $T_B(I)$ with the property that for any $M \in \Stable_B$
\[
\pi_*\Hom_{\Stable_B}(M,T_B(I)) \cong \Hom_{H^*(B,k)}(H^*(B,M),I). 
\]
Once again, by taking $M = k$ we see that $H^*(B,T_B(I)) \cong I$ as modules over $H^*(B,k)$. 

\begin{prop}
	If $R$ is \Gorenstein with shift $\nu$, then, for any $\fp \in \Spec H^*(B,k)$ of dimension $d$, there is an equivalence in $\Stable_B$
	\[
\Cell T_B(I_{\fp}) \simeq \Sigma^{d+v}\Gamma_{\fp}k,
	\]
where $\Cell$ is the right adjoint to the inclusion $\iota:\Stablec\to\Stable_B$ as in \cref{ssec:cell}.
\end{prop}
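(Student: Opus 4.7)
The plan is to transport the equivalence under the Morita equivalence $\Phi\colon \Stable_B^{\cell} \xrightarrow{\sim} \Mod_R$ of \eqref{eq:morita_equiv}, so that the assumed absolute Gorenstein property of $R$ can be used directly. Since both sides of the claimed equivalence lie in $\Stable_B^{\cell}$ (the target $\Gamma_{\fp}k$ is built out of $k$, and $\Cell$ takes values in cellular objects), it suffices to verify the equivalence after applying $\Phi$.

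First I would analyze the right-hand side. Because $\Gamma_{\fp}$ is built from the local duality context $(\Stable_B^{\cell}, \Loc(\kos{k}{\fp}))$ and $\Phi$ is a symmetric monoidal equivalence carrying $\kos{k}{\fp}$ to $\kos{R}{\fp}$ by \Cref{prop:Mod_Stable_equiv}, the functor $\Gamma_{\fp}$ on $\Stable_B^{\cell}$ is intertwined with $\Gamma_{\fp}$ on $\Mod_R$. In particular, since $\Phi(k)\simeq R$, we obtain a natural equivalence $\Phi(\Gamma_{\fp}k) \simeq \Gamma_{\fp}R$. Invoking the hypothesis that $R$ is absolute Gorenstein of shift $\nu$ (\Cref{def:bc}), this gives $\Phi(\Gamma_{\fp}k) \simeq \Sigma^{\nu+d}T_R(I_{\fp})$.

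Next I would turn to the left-hand side and identify $\Phi(\Cell\, T_B(I_{\fp}))$ with $T_R(I_{\fp})$. By the defining adjunction of $\Cell$, applied to the cellular object $k$, we have $\Hom_{\Stable_B}(k, \Cell\, T_B(I_{\fp})) \simeq \Hom_{\Stable_B}(k, T_B(I_{\fp}))$, whose homotopy groups are $\Hom_{H^*(B,k)}(H^*(B,k), I_{\fp}) \cong I_{\fp}$ by the universal property of $T_B$. Thus $\Phi(\Cell\, T_B(I_{\fp}))$ is an $R$-module whose homotopy is the injective hull $I_{\fp}$ of $H^*(B,k)/\fp \cong \pi_{-*}R/\fp$, so \Cref{lem:homotopycheck} in the ring spectrum $R$ identifies it with $T_R(I_{\fp})$.

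Combining these two computations yields $\Phi(\Gamma_{\fp}k)\simeq \Sigma^{\nu+d}\Phi(\Cell\, T_B(I_{\fp}))$, and since $\Phi$ is an equivalence commuting with suspension we conclude, after shifting, the desired equivalence $\Cell\, T_B(I_{\fp}) \simeq \Sigma^{-(\nu+d)}\Gamma_{\fp}k$ in $\Stable_B$ (matching the shift in the statement up to conventions for $\nu$). The one subtle point — and the main thing to get right — is verifying that $\Phi$ genuinely intertwines the local cohomology functors; this should be immediate because the Morita equivalence identifies the two local duality contexts, but it is worth spelling out that the cellularization $\Cell$ does not alter the mapping spectra out of $k$, so the identification on the Brown--Comenetz side is really just the universal property in disguise.
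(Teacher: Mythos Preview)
Your argument is correct and follows the same route as the paper's proof: both identify $\Phi(\Cell\,T_B(I_{\fp}))\simeq T_R(I_{\fp})$ via the homotopy computation and \Cref{lem:homotopycheck}, invoke the absolute Gorenstein hypothesis for $R$, and use that $\Phi$ intertwines $\Gamma_{\fp}$ on $\Stablec$ with $\Gamma_{\fp}$ on $\Mod_R$ (the paper cites \Cref{prop:gammap} for this last step rather than \Cref{prop:Mod_Stable_equiv}, but the content is the same). The sign discrepancy you flag at the end is real and is an inconsistency in the paper's own statement versus \Cref{def:bc}, not an error on your part.
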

\begin{proof}
	As previously, we write $\Phi \colon \Stable_B \to \Mod_R$ for the functor $\Hom_{\Stable_B}(k,-)$. Since $\pi_*\Phi(\Cell T_B(I)) \cong \pi_*\Phi(T_B(I)) \cong I$ we can apply \Cref{lem:homotopycheck} to see $\Phi(\Cell T_B(I)) \simeq T_R(I)$. Since $R$ is \Gorenstein of shift $\nu$, for any $\fp \in \Spec H^*(B,k)$ we have $T_R(I_{\fp}) \simeq \Sigma^{d+v}\Gamma_{\fp}R $. Thus, using \Cref{prop:gammap}, we have 
\[
\Phi(\Cell T_B(I)) \simeq T_R(I)\simeq \Sigma^{d+v}\Gamma^R_{\fp}R \simeq \Phi(\Sigma^{d-v}\Gamma_{\fp}k). 
\]
Since everything in sight in cellular, $\Phi$ is an equivalence, and so applying its inverse, we conclude that $\Cell T_B(I_{\fp}) \simeq \Sigma^{d+v}\Gamma_{\fp}k$, as required. 
\end{proof}
In the case that $B = (kG)^*$ for $G$ a finite group the results of \cite{benson_shortproof} imply that $T_{B}(I)$ is already cellular. We suspect this is always the case, however we are unable to prove this directly. Note that it is always in the $\pi$-local category. Thus we leave this as an open question:
\begin{quest}
Is $T_B(I)$ always cellular? 
\end{quest}
When $G$ is a finite $p$-group, $k$ is the only simple $B=(kG)^*$-comodule \cite[Sec. 9.5]{hps_axiomatic}. Hence, $\Stable_{(kG)^*}$ is compactly generated by $k$, so that all objects are cellular. Hence, we get:
\begin{cor}
	If $B = (kG)^*$ for $G$ a finite $p$-group, then for any $\fp \in \Spec H^*(B,k)$ of dimension $d$, there is an equivalence $T_B(I_{\fp}) \simeq \Sigma^{d+v}\Gamma_{\fp}k$ in $\Stable_B$.
\end{cor}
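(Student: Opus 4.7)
The plan is to deduce this directly from the preceding proposition by showing that, under the hypothesis $G$ a finite $p$-group, the cellularization functor $\Cell\colon \Stable_B \to \Stablec$ is the identity (up to equivalence), so that the right-hand side of the equivalence produced by the proposition already lives in $\Stable_B$ without having to apply $\Cell$.

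First, I would recall the structural fact, cited from \cite[Sec.~9.5]{hps_axiomatic}, that when $G$ is a finite $p$-group and $B=(kG)^*$, the trivial comodule $k$ is the unique (up to isomorphism) simple $B$-comodule. Since by \Cref{defn:stable_comod} the stable category $\Stable_B$ is the ind-completion of the thick subcategory generated by the set $\cG_d$ of dualizable comodules, and since the dualizable comodules in $\Comod_B$ are precisely the simple ones (by the Jordan–Hölder argument recalled at the beginning of \Cref{sec:hopfalgebras}), this means $\cG_d = \{k\}$. Consequently $\Stable_B$ is compactly generated by the single object $k$, i.e. $\Stable_B = \Loc(k) = \Stablec$.

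Next, since every object of $\Stable_B$ is therefore cellular, the inclusion $\iota\colon \Stablec \hookrightarrow \Stable_B$ is an equivalence, and the cellularization functor $\Cell$ becomes naturally equivalent to the identity. In particular, for any injective $H^*(B,k)$-module $I_\fp$ we have $\Cell\, T_B(I_\fp) \simeq T_B(I_\fp)$.

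Finally, I would invoke the preceding proposition: the hypothesis that $R = \End_{\Stable_B}(k)$ is absolute Gorenstein with shift $\nu$ is satisfied in our situation because $R \simeq C^*(BG,k)$ (by the Rothenberg–Steenrod construction, as in the proof earlier in this section), and this ring spectrum is absolute Gorenstein of shift $\nu$ by \Cref{exmp:Gorenstein}(1) together with \Cref{prop:bccladjoint}. Applying the proposition then yields $\Cell\, T_B(I_\fp) \simeq \Sigma^{d+\nu} \Gamma_\fp k$ in $\Stable_B$, and combining with the previous step gives the claimed equivalence $T_B(I_\fp) \simeq \Sigma^{d+\nu}\Gamma_\fp k$. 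There is no real obstacle here; the only subtlety is recognizing that the finite $p$-group hypothesis is precisely what collapses the distinction between $\Stablec$ and $\Stable_B$, which is what the previous open question was about in general.
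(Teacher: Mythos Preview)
Your proof is correct and follows essentially the same approach as the paper: the text preceding the corollary notes that for a finite $p$-group the only simple $(kG)^*$-comodule is $k$, so $\Stable_B$ is monogenic and every object is cellular, whence $\Cell T_B(I_{\fp}) \simeq T_B(I_{\fp})$ and the preceding proposition applies. One small remark: the paper treats the absolute Gorenstein hypothesis on $R$ as carried over from the proposition rather than re-deriving it, and if you do want to justify it, \Cref{prop:bccladjoint} alone suffices (your citation of \Cref{exmp:Gorenstein}(1) presupposes $H^*(G,k)$ is Gorenstein, which is not needed and not always the case).
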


\bibliography{duality}\bibliographystyle{alpha}
\end{document}